\def\0{{\bf 0}}
\def\1{{\bf 1}}
\def\cF{{\mathcal{F}}}
\def\proof{\noindent{\bf Proof: }}
\def\qed{ \hskip 20pt{\vrule height7pt width6pt depth0pt}\hfil}
\def\forb{{\mathrm{forb}}}
\def\ext{{\mathrm{ext}}}
\def\Av{{\mathrm{Avoid}}}
\newcommand{\linelessfrac}[2]{\genfrac{}{}{0pt}{}{#1}{#2}}
\newcommand{\ncols}[1]{\| #1 \|}
\newcommand{\rf}[1]{(\ref{#1})}
\newcommand{\trf}[1]{Theorem~\ref{#1}}
\newcommand{\srf}[1]{Section~\ref{#1}}
\newtheorem{thm}{Theorem}[section]
\newtheorem{lemm}[thm]{Lemma}
\newtheorem{prop}[thm]{Proposition}
\newtheorem{cor}[thm]{Corollary}
\newtheorem{defin}[thm]{Definition}
\newtheorem{remark}{Remark}
\newtheorem{obs}[thm]{Observation}
\newtheorem{claim}{Claim}
\newenvironment{E}{\begin{equation}}{\end{equation}}
\title{Stability Theorems for Forbidden Configurations}
\author{
R.P., Anstee\thanks{Research supported in part by
NSERC}, Jaehwan Seok\thanks{Research supported in part by NSERC USRA} \\ Mathematics Department\\The University of British Columbia\\Vancouver,
B.C. Canada V6T 1Z2\\ {\small{\texttt{anstee@math.ubc.ca}}}, {\small{\texttt{jseok627@student.ubc.ca}}} 
\and Benjamin Kreiswirth,  Bowen Li \\ Budapest Semesters in Mathematics\\Budapest, Hungary \\ {\small{\texttt{benjamin.kreiswirth@gmail.com}}},\ {\small{\texttt{bowenli.math@gmail.com}}}
\and Attila Sali\thanks{Research of the second author was partially supported by the
    National Research, Development and Innovation Office (NKFIH)
    grants K--116769 and SNN-135643. 
}\\ HUN-REN Alfr\'ed R\' enyi Institute of Mathematics\\Budapest, Hungary and \\ Department of Computer Science\\ Budapest University of Technology and Economics\\ {\small{\texttt{sali.attila@renyi.hu}}} 
}
\begin{document}

\maketitle
\begin{abstract}
    Stability is a well investigated concept in extremal combinatorics. The main idea is that if some object is close in size to an extremal object, then it retains the structure of the extremal construction. In the present paper we study stability in the context of forbidden configurations. $(0,1)$-matrix $F$ is a configuration in a $(0,1)$-matrix $A$ if $F$ is a row and columns permutation of a submatrix of $A$. $\mathrm{Avoid}(m,F)$ denotes the set of $m$-rowed $(0,1)$-matrices with pairwise distinct columns without configuration $F$, $\mathrm{forb}(m,F)$ is the largest number of columns of a matrix in $\mathrm{Avoid}(m,F)$, while $\mathrm{ext}(m,F)$ is the set of matrices in $\mathrm{Avoid}(m,F)$ of size $\mathrm{forb}(m,F)$. We show cases (i) when each element of $\mathrm{Avoid}(m,F)$ have the structure of element(s) in $\mathrm{ext}(m,F)$, (ii) $\mathrm{forb}(m,F)=\Theta(m^2)$ and the size of $A\in \mathrm{Avoid}(m,F)$ deviates from $\mathrm{forb}(m,F)$ by a linear amount, or (iii) $\mathrm{forb}(m,F)=\Theta(m)$ and the size of $A$ is smaller by a constant, then the structure of $A$ is same as the structure of a matrix in $\mathrm{ext}(m,F)$.
\end{abstract}
\section{Introduction}
An $m\times n$ matrix $A$ is said to be \emph{simple} if it is a (0,1)-matrix with no repeated columns. There is a natural correspondence between columns of $A$ and subsets of $[m]$. We consider an extremal set problem in matrix terminology as follows. Let $\ncols{A}$ be the number of columns of $A$. For a given matrix $F$, we say $F$ is a \emph{configuration} in $A$ denoted $F\prec A$ if there is a submatrix of $A$ which is a row and column permutation of $F$. Define 
$$\Av(m,F)=\left\{A\,|\,A \hbox{ is }m\hbox{-rowed}, F\not\prec A\right\},$$ 
$$\forb(m,F)=\max_{A\in\Av(m,F)}\ncols{A}.$$
A matrix $A\in\Av(m,F)$ is called {\emph{extremal}} if $\ncols{A}=\forb(m,F)$ and let 
$$\ext(m,F)=\{A\in\Av(m,F)\,|\,\ncols{A}=\forb(m,F)\}.$$ 
Problems in extremal combinatorics  are first concerned with bounds but considerations of  stability are one of the next topics to explore.  In the study of forbidden configurations, we consider $A\in\Av(m,F)$
 (or $\Av(m,{\cal F})$) with $\ncols{A}$ sufficiently close to $\forb(m,F)$, and hope to  deduce that the structure of $A$ is similar to the structure of matrices in $\ext(m,F)$.

The paper seeks structural characterizations of  extremal and near extremal matrices. A number of results have already been proven and in these cases the paper is more expository in nature. We  find some \emph{strong stability} results, namely when  $A\in\Av(m,F)$ and $\ncols{A}$ is close to $\forb(m,F)$, then some structures of the  matrices in $\ext(m,F)$ appear in $A$. We ignore row and column permutations of our matrices unless explicitly stated.

Some important matrices include $I_k$, the $k\times k$ identity matrix, $T_k$ the $k\times k$ triangular matrix (with $1's$ in position $i,j$ if $i\le j$), and $K_k$ the $k\times 2^k$ matrix of all possible (0,1)-columns on $k$ rows, furthermore $K_k^{\ell}$ the $k$-rowed matrix with all possible columns of $\ell$ 1's. We denote by $F^c$ the (0,1)-complement of $F$ so that $I_k^c$ is the complement of the identity. For example, $K_k^1=I_k$ and $K_k^{k-1}=I_k^c$.  We define $\1_k$ as $k\times 1$ column of 1's, $\0_k$ as $k\times 1$ column of 0's and $\1_k\0_{\ell}$ as $(k+\ell)\times 1$ columns with $k$ 1's on top of $\ell$ 0's. Use the following notation for 2-rowed configurations: $$F(r,p,q,s)=\left[\begin{array}{@{}c@{}}\\ \\ \end{array}\right. \overbrace{
\begin{array}{@{}c}00\cdots 0\\ 00\cdots 0\\ \end{array}}^r \overbrace{
\begin{array}{c}11\cdots 1\\ 00\cdots 0\\ \end{array}}^p \overbrace{
\begin{array}{c}00\cdots 0\\ 11\cdots 1\\\end{array}}^q 
\overbrace{
\begin{array}{c@{}}11\cdots 1\\ 11\cdots 1\\\end{array}}^s \left.\begin{array}{@{}c@{}}\\ \\ \end{array}\right].$$ 
For example $F(1,1,1,1)=K_2$. 

For two simple matrices $A,B$ where $A$ is $m_1$-rowed and $B$ is $m_2$-rowed, define the product $A\times B$  as the $(m_1+m_2)$-rowed matrix of $\ncols{A}\ncols{B}$ columns consisting of each column of $A$ on top of each column of $B$. For example $K_k=[0\,1]\times K_{k-1}.$ 


\vskip 10pt
The following example demonstrates stability. The configuration $C_k$ is the $k\times k$ vertex-edge incidence matrix of the cycle of length $k$. Now $\forb(m,C_3)=\forb(m,K_3)$.  A matrix is \emph{totally balanced} if it avoids 
$\{C_3,C_4,C_5,\ldots\}$. 

\begin{thm}\cite{A80} $\ext(m,C_3)=\ext(m,\{C_3,C_4,C_5,\ldots\})$. \end{thm}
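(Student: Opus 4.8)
Since $\{C_3,C_4,\ldots\}\supseteq\{C_3\}$ we have $\Av(m,\{C_3,C_4,\ldots\})\subseteq\Av(m,C_3)$, so $\forb(m,\{C_3,C_4,\ldots\})\le\forb(m,C_3)$. To match this, I would exhibit a single totally balanced matrix of the extremal size. Take the $m$-rowed simple matrix whose columns are all $\binom{m+1}{2}$ intervals $\{a,a+1,\ldots,b\}$ with $1\le a\le b\le m$ together with the zero column; it has $1+\binom{m+1}{2}=1+m+\binom{m}{2}=\forb(m,K_3)=\forb(m,C_3)$ columns. Each column has its $1$'s in consecutive rows, and this consecutive-ones property is inherited by every row/column submatrix, whereas the incidence matrix of $C_k$ admits no ordering of its rows making every column's $1$'s consecutive (a linear order of a cycle always leaves one edge non-consecutive). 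Hence this matrix contains no $C_k$ and is totally balanced. This already gives $\forb(m,\{C_3,\ldots\})=\forb(m,C_3)$ and $\ext(m,\{C_3,\ldots\})\subseteq\ext(m,C_3)$, so the theorem reduces to the reverse inclusion, namely the claim that \emph{every} $A\in\ext(m,C_3)$ is totally balanced.

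I would prove that claim by induction on $m$, the base cases $m\le 3$ being immediate (on at most $3$ rows ``totally balanced'' just means $C_3\not\prec A$). For the step, fix a row $r$, let $G_r$ be the simple matrix of distinct columns of $A$ after deleting row $r$, and $H_r$ the matrix of columns that occur twice (once with $0$ and once with $1$ in row $r$), so $\ncols{A}=\ncols{G_r}+\ncols{H_r}$. Deleting a row cannot create a configuration, so $G_r\in\Av(m-1,C_3)$. If $H_r$ contained $K_2$, each of those four columns would be doubled in row $r$, so $A$ would contain all eight $(0,1)$-columns on the two rows of $H_r$ together with row $r$, i.e.\ $K_3\succ C_3$; hence $H_r\in\Av(m-1,K_2)$ and $\ncols{H_r}\le\forb(m-1,K_2)=m$. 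Since $\forb(m,C_3)=\forb(m-1,C_3)+m$, extremality of $A$ forces, for \emph{every} $r$, both $G_r\in\ext(m-1,C_3)$ (hence totally balanced by induction) and $\ncols{H_r}=m$.

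Now suppose $A$ were not totally balanced, and let $C_k$ with $k\ge 4$ (as $C_3\not\prec A$) be a shortest cycle in $A$, using a row set $R$ with $|R|=k$. If $k<m$, pick $r\notin R$: the $k$ columns of this $C_k$ stay pairwise distinct on $R$, hence survive in $G_r$ and give $C_k\prec G_r$, contradicting that $G_r$ is totally balanced. Therefore $k=m$, i.e.\ the shortest cycle spans all rows; equivalently $A$ contains all $m$ cyclic pairs $\{i,i+1\}$ (indices mod $m$) and avoids $C_3,\ldots,C_{m-1}$.

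The remaining, and main, difficulty is to rule out this spanning cycle. The presence of all cyclic pairs together with $C_3$-avoidance already forbids any column whose complement has an isolated element (a length-one gap), and for $m=4$ this local constraint alone leaves only $10<11=\forb(4,C_3)$ admissible columns, a contradiction. For $m\ge 5$, however, the number of columns surviving this local constraint is exponential in $m$ and already exceeds $\forb(m,C_3)$ (e.g.\ $17>16$ at $m=5$), so the contradiction cannot be purely local: one must use the global $C_3$-avoidance among the chosen columns. I expect the finish to use the full rigidity supplied by extremality --- that for every $r$ the matrix $G_r$ is totally balanced and $H_r\in\ext(m-1,K_2)$ --- to show that the $m$ forced cyclic pairs impose enough unavoidable overlap that $A$ cannot reach $\forb(m,C_3)$ columns while containing $C_m$. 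Turning this into a quantitative column deficit is the step I anticipate being hardest.
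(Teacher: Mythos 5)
Your reductions are correct as far as they go: the interval construction does give a totally balanced matrix with $1+\binom{m+1}{2}=\forb(m,K_3)\geq\forb(m,C_3)$ columns (and the consecutive-ones argument that no $C_k$ fits is sound), so $\forb(m,\{C_3,C_4,\ldots\})=\forb(m,C_3)$ and one inclusion of the theorem follows. The standard decomposition step also checks out: $K_2\prec H_r$ would double all four patterns in row $r$ and yield $K_3\succ C_3$, so $\ncols{H_r}\le\forb(m-1,K_2)=m$, and since $\forb(m,C_3)-\forb(m-1,C_3)=m$, extremality forces $G_r\in\ext(m-1,C_3)$ and $\ncols{H_r}=m$ for every $r$; the shortest-cycle argument then correctly shows any cycle in $A$ must span all $m$ rows. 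But the proof then stops at exactly the point where the theorem lives. Ruling out a spanning $C_m$ in an extremal $C_3$-avoiding matrix is the entire content of the hard direction, and you yourself verify that your only concrete tool there --- the local ``no isolated zero'' constraint from $C_3$-avoidance against the cycle columns --- fails for all $m\geq 5$ (your count $17>16$ at $m=5$). The final paragraph is an honest statement of intent (``I expect the finish to use the full rigidity\ldots''), not an argument, so this is a genuine gap, not a fixable slip.

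For comparison: the paper itself gives no proof --- the theorem is quoted from [A80] --- but it does tell you what the missing engine is. The proof there goes through a structural characterization of $\ext(m,C_3)$, in particular that the columns of sum $2$ of an extremal matrix can be interpreted as a spanning tree on the rows. Once that structure theorem is available, your remaining case evaporates instantly: a spanning $C_m$ consists of $m$ columns of sum $2$ forming a cycle, which is not a forest. So the concrete missing ingredient in your proposal is precisely this global structure theorem (or some equivalent global statement about the sum-$2$ columns), and it cannot be recovered from the per-row counts $\ncols{H_r}=m$ and the inductive total balance of the $G_r$ alone without substantial further work --- the exponential count of locally admissible columns shows any successful finish must exploit $C_3$-avoidance among the non-cycle columns globally, which is what the spanning-tree characterization encapsulates.
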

The structure of matrices in $\ext(m,C_3)$ is given in \cite{A80} including that the columns of sum 2 can be interpreted as a spanning tree.
\begin{thm}\cite{A83} Assume $A\in\Av(m,\{C_3,C_4,C_5,\ldots\})$. Then there exists an $A'\in\ext(m,\{C_3,C_4,C_5,\ldots\})$ with $A\prec A'$ \end{thm}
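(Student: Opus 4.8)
Since configuration containment is insensitive to row and column permutations and both $A$ and the prospective $A'$ are $m$-rowed and simple, any witness of $A\prec A'$ must use all $m$ rows of $A'$; thus $A\prec A'$ with $\ncols{A}<\ncols{A'}$ says exactly that, after permuting the rows of $A'$, the matrix $A'$ is obtained from $A$ by adjoining distinct columns while staying in $\Av(m,\{C_3,C_4,\ldots\})$. Because maximality under adjoining columns is permutation invariant, the theorem is equivalent to the assertion that every \emph{maximal} totally balanced $m$-rowed simple matrix is already of \emph{maximum} size, namely $\ncols{A}=\forb(m,\{C_3,C_4,\ldots\})=\forb(m,C_3)=\binom m2+m+1$. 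The plan is to prove this coincidence of maximal and maximum: granting it, one simply adjoins columns to the given $A$ until none can be added, arriving at a member of $\ext(m,\{C_3,C_4,\ldots\})=\ext(m,C_3)$ that still satisfies $A\prec A'$.

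To control the completion I would work in the standard form underlying \cite{A80}: a simple matrix is totally balanced if and only if its rows and columns can be permuted so that it contains no $\begin{bmatrix}1&1\\1&0\end{bmatrix}$, the $\Gamma$-free (greedy) form. In this form the columns are nested along the spanning tree $T$ that, by \cite{A80}, is carried by the columns of sum $2$, with the all-zero column, the $m$ unit columns, and the higher columns all consistent with $T$. Completion then means greedily adjoining every distinct column that preserves the $\Gamma$-free property, equivalently total balance.

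The heart of the argument is showing that this greedy completion is forced to reach $\binom m2+m+1$, i.e.\ that a maximal matrix cannot be deficient. I would induct on $m$ through the standard decomposition: choosing a leaf $r$ of the support tree $T$ and splitting the columns of $A$ on their entry in row $r$, one writes $\ncols{A}=\ncols{B_r}+2\ncols{C_r}$, where $C_r$ collects the columns occurring with both values in row $r$ and $[B_r\mid C_r]$ is $(m-1)$-rowed, simple, and again totally balanced. The chain-like part $C_r$ is governed by a $2$-rowed reduction of $C_3$ and contributes at most $m$, while $[B_r\mid C_r]$ is handled by the inductive hypothesis, so that reassembling gives $\forb(m,C_3)=\forb(m-1,C_3)+m$. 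Under the maximality hypothesis these inequalities must be upgraded to equalities: maximality of $A$ should force $C_r$ to be a full chain of length $m$ and $[B_r\mid C_r]$ to be maximal, hence by induction extremal, on $m-1$ rows.

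The main obstacle is exactly this rigidity. A priori a totally balanced matrix can be locally stuck---maximal---without being maximum, so the crux is an exchange lemma: whenever the count falls short of $\binom m2+m+1$, the $\Gamma$-free form must exhibit a column pattern compatible with $T$ that is absent and can be inserted (directly, or after swapping a single offending column), contradicting maximality. A secondary subtlety must be respected along the way: for a non-path spanning tree the family of \emph{all} subtrees is not totally balanced---a star already produces a $C_3$---so the completion cannot be taken to be ``all subtrees of $T$''. The $\Gamma$-free ordering resolves this by linearizing $T$ so that the admissible columns behave like intervals, which is what keeps the maximum count equal to $\binom m2+m+1$ irrespective of the shape of $T$. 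Once the exchange lemma establishes maximal $=$ maximum, the theorem follows at once.
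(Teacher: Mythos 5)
Your reduction is correct and matches the route of the cited source: since adjoining columns preserves $A\prec A'$, the theorem is equivalent to the statement that every \emph{maximal} $m$-rowed simple totally balanced matrix is \emph{maximum}, i.e.\ has $\forb(m,C_3)=\binom{m}{2}+m+1$ columns (the paper itself gives no proof, citing \cite{A83}, and this ``maximal $=$ maximum'' rigidity is exactly what \cite{A83} establishes via the $\Gamma$-free standard form). The problem is that your proposal stops there: the greedy completion step is trivial, so the entire content of the theorem is the rigidity statement, and you explicitly defer it to an ``exchange lemma'' that you describe as ``the heart of the argument'' and ``the main obstacle'' without proving it or even stating it precisely. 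Asserting that a deficient maximal matrix ``must exhibit a column pattern compatible with $T$ that is absent and can be inserted (directly, or after swapping a single offending column)'' is a restatement of the goal, not an argument; as written this is a proof plan, not a proof.

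The inductive sketch also rests on structure you are not entitled to at that stage. That the columns of sum $2$ form a spanning tree $T$ is a property of matrices in $\ext(m,C_3)$ proved in \cite{A80}; for a merely \emph{maximal} totally balanced matrix you would first have to show it contains $\0_m$ and $I_m$ (easy: columns of sum at most $1$ cannot participate in any $C_k$), that the sum-$2$ columns are acyclic (immediate), and that they span and are connected (not immediate -- adding an edge-column joining two components could a priori close a special cycle with larger columns), so invoking ``a leaf $r$ of the support tree'' is circular until this is argued. Likewise, ``maximality of $A$ should force $C_r$ to be a full chain of length $m$ and $[B_r\mid C_r]$ to be maximal on $m-1$ rows'' is precisely the rigidity needing proof: maximality of $A$ is a global hypothesis, and a column addable to $[B_rC_rD_r]$ on $m-1$ rows might be blocked in $A$ only by special cycles through row $r$ under either choice of entry in row $r$, so non-maximality of the reduced matrix does not obviously contradict maximality of $A$. (Minor: your count $\ncols{A}=\ncols{B_r}+2\ncols{C_r}$ drops the $D_r$ block of the standard decomposition \rf{standard} unless you redefine $B_r$ to absorb it.) Your observation that ``all subtrees of $T$'' fails total balance for a star is correct and shows you see the right subtlety, but the lemma that resolves it is exactly the missing piece.
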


Thus any totally balanced matrix can be extended, by adding  further columns to an extremal totally balanced matrix.  A powerful stability.

Use the notation $F_{a,b,c,d}$ for the $(a+b+c+d)\times 2$ configuration of $a$ rows $[1\,1]$, $b$ rows $[1\,0]$, $c$ rows $[0\,1]$, $d$ rows $[0\,0]$.

\vskip 10pt
The $4\times 2$ configuration $F_{0,2,2,0}$ is a good example of stability. The structure of all matrices in $\Av(m,F_{0,2,2,0})$  were determined in \cite{ABS} before  determining the bound.  Moreover \cite{AK} has strong stability results for $F_{0,b,b,0}$ instrumental in proving asymptotic bounds for $F_{a,b,c,d}$.

Another similarly powerful stability result considers $F=F(0,1,2,0)$ and its transpose $F^T=\left[\begin{smallmatrix}
    1&0\\0&1\\0&1
\end{smallmatrix}\right]$. 

\begin{thm}\label{thm:F-FT-char} Let $F=F(0,1,2,0)$. Then
    $A \in \Av(m,F)$ or $A \in \Av(m,F^T)$ if and only if $A$ is a subset of the columns of a matrix of the following block form:
    \begin{align}\label{blockform}
        \begin{bmatrix}
            1 & B_1 & 0 & 0 & 0 & ... & 0 & 0 \\
            1 & 1 & 1 & B_2 & 0 & ... & 0 & 0 \\
            1 & 1 & 1 & 1 & 1 & ... & 0 & 0 \\
            ... & ... & ... & ... & ... & ... & ... & ... \\
            1 & 1 & 1 & 1 & 1 & ... & B_k & 0 \\
        \end{bmatrix}
    \end{align}
    where the $B_i$ are either "empty" (their columns do not exist), or are some $I_m$ or $I_m^c$ for some $m \geq 2$ (and the $B_i$ may contain additional repeated rows when $A \in \Av(m,F) \setminus \Av(m,F^T)$).  \qed
\end{thm}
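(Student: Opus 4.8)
\medskip
\noindent\textbf{Proof plan.} Identify each column with the subset of $[m]$ in which it has a $1$, and for a row $r$ let $R_r$ denote the set of columns containing $r$. Then $F^T\prec A$ holds exactly when two columns $P,Q$ satisfy $|P\setminus Q|\ge 1$ and $|Q\setminus P|\ge 2$, while $F\prec A$ holds exactly when for some ordered pair of rows $r_1,r_2$ one has $|R_{r_1}\setminus R_{r_2}|\ge 1$ and $|R_{r_2}\setminus R_{r_1}|\ge 2$. Hence $A\in\Av(m,F^T)$ iff every two \emph{incomparable} columns $P,Q$ satisfy $|P\setminus Q|=|Q\setminus P|=1$, and $A\in\Av(m,F)$ iff every two rows are either equal or have column–sets $R_{r_1},R_{r_2}$ obeying that same $1/1$ condition. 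The two hypotheses are thus the identical condition read on the columns and on the rows; the only asymmetry is that rows may coincide while columns, being simple, may not, and this is exactly the source of the repeated rows allowed inside the $B_i$ in the $\Av(m,F)\setminus\Av(m,F^T)$ case.

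The plan is to isolate one structural lemma. \emph{Core Lemma:} a simple set family $\mathcal S$ in which every two incomparable members $P,Q$ satisfy $|P\setminus Q|=|Q\setminus P|=1$ is a subfamily of the columns of a block form \rf{blockform} with genuine $I_{m_i}/I_{m_i}^c$ blocks. I would prove it in three steps. First, the incomparability relation is transitive: if $P\parallel Q$ and $Q\parallel R$ with $P\ne R$, then $P\parallel R$. Assuming instead $P\subsetneq R$, the inclusions $Q\setminus R\subseteq Q\setminus P$ and $P\setminus Q\subseteq R\setminus Q$ together with the $1/1$ condition force $Q\setminus R=Q\setminus P$ and $P\setminus Q=R\setminus Q$, and chasing an element of $R\setminus P$ then gives a contradiction. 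Consequently the incomparability graph is a disjoint union of cliques, whose classes are either singletons (columns comparable to all others) or genuine antichains. Second, each antichain class consists of equal-sized sets with pairwise intersections one smaller, so by the standard near–pencil dichotomy it is either a star $\{C\cup\{x\}:x\in X\}$ or a co-star $\{D\setminus\{x\}:x\in X\}$, giving precisely a block $B_i=I_{|X|}$, respectively $I_{|X|}^c$, on the new ground set $\rho_i:=X$. Third, the classes are totally ordered by inclusion (if one representative of a class is contained in one of another, all are, by the $1/1$ condition), so reading the chain of classes from top to bottom produces the nested tails and the descending staircase of $1$'s, the antichain classes filling the gaps; completing each partial star/co-star to a full $I/I^c$ and inserting the missing tails exhibits the block form superset.

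With the Core Lemma in hand the theorem follows. For the ``if'' direction, reading off \rf{blockform} shows that in the genuine case any two columns are comparable except same-level branching sets, which differ by $1/1$, so the column condition holds and $A\in\Av(m,F^T)$; when the $B_i$ carry repeated rows the column condition can fail, but the row condition (any two rows equal or $1/1$) still holds, so $A\in\Av(m,F)$; subfamilies inherit both. For the ``only if'' direction, if $A\in\Av(m,F^T)$ the Core Lemma applied to the columns gives the genuine block form. If $A\in\Av(m,F)$, I apply it to the rows: merging identical rows makes the transpose simple, its incomparable columns satisfy the $1/1$ condition, so the merged transpose is a block form; since the transpose of a block form is again a block form (with $I\leftrightarrow I$ and $I^c\leftrightarrow I^c$, as $I_m,I_m^c$ are symmetric), transposing back and re-duplicating the merged rows places $A$ inside a block form whose $B_i$ are $I_{m_i}/I_{m_i}^c$ with repeated rows.

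I expect the main obstacle to be the global assembly in the third step of the Core Lemma: transitivity of incomparability and the near–pencil dichotomy are short and local, but organizing the totally ordered chain of classes into the exact staircase of \rf{blockform} — matching successive tails, allotting each new element to the correct $\rho_i$, and verifying that a \emph{subset} of the block columns genuinely extends to a full block form — will require careful bookkeeping. The secondary delicate point is the transpose step for the $\Av(m,F)$ case, where one must confirm that re-duplicating rows keeps $A$ inside the block family rather than producing some configuration outside \rf{blockform}.
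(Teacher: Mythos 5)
You take a genuinely different route from the paper, and your plan is sound. The paper proves the key structural step (Lemmas~\ref{lem:FT-block} and \ref{lem:F-block}) by locating a \emph{maximum} $I_m$ or $I_m^c$ configuration $B$ and chasing entries: every other column is shown constant on the rows of $B$ (splitting into $S_0,S_1$), every row outside splits into $R_0,R_1$, and the $R_0\times S_0$ and $R_1\times S_1$ corners are forced by maximality of $B$ (else an $I_{m+1}$ or, when $m=2$, an $I_3^c$), yielding the decomposition $\left[\begin{smallmatrix} A_1 & 0 & 0 \\ 1 & B & 0 \\ 1 & 1 & A_2 \end{smallmatrix}\right]$; the theorem then follows by induction on the number of columns, and that induction performs the assembly into \rf{blockform} automatically. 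You instead argue globally in the set-system reformulation: your transitivity claim for incomparability is correct (the chase closes: from $P\subsetneq R$ one gets $Q\setminus R=Q\setminus P$ and $P\setminus Q=R\setminus Q$ as equal singletons, and any $x\in R\setminus P$ is contradicted whether $x\in Q$ or not), so the incomparability classes are exactly the blocks; the star/co-star dichotomy for families with pairwise symmetric difference $2$ (cliques in a Johnson graph) identifies each class as an $I_{|X|}$ or $I_{|X|}^c$, and cross-class comparability gives the total order. What your route buys: the blocks appear simultaneously, their maximality is automatic from the class structure, and you avoid the paper's entry-by-entry case analysis; what it costs is exactly the bookkeeping you flag, namely interleaving the singleton (chain) columns as nested tails and empty blocks between consecutive classes (this works: if $\mathcal{C}_i<\{P\}<\mathcal{C}_{i+1}$ then $C_i\cup X_i\subseteq P\subseteq C_{i+1}$, and the singletons between two classes are themselves nested, so the in-between rows can be ordered consistently), and, in the $\Av(m,F)$ case, verifying that transposing back stays in the block family (deleting a row of an $I_\ell$ block turns one block column into an available tail column and leaves $I_{\ell-1}$; duplicated rows inside a block are exactly the repeated rows the statement permits; duplicated chain rows sit in consecutive empty blocks whose distinguishing tail column is simply not a column of $A$). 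Note that both proofs treat $F$ identically in spirit, by running the $F^T$ argument on $A^T$ with row multiplicities replacing simplicity, as in Lemma~\ref{lem:F-block}, and your converse direction coincides with the paper's $\Leftarrow$ argument.
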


\vskip 10pt

The following two results in  Section~\ref{sec:F2}  are for $\ncols{A}$  within a linear amount of the bound.
    Let
\begin{E}F_2=\left[\begin{array}{cccc} 1&1&0&0\\ 1&0&1&0\\ 0&1&0&1\\ \end{array}\right].\label{F2matrix}\end{E}
Note $\forb(m,F_2)=\lfloor m^2/4\rfloor+m+1=\forb(m,F(1,2,2,1))$ \cite{survey}.
\begin{thm}
Let $A\in\Av(m,F_2)$ such that $\ncols{A}\ge\forb(m,F_2)-m/6+c$. Then $A\prec [\0_k\,\,T_k]\times [\0_{m-k}\,\,T_{m-k}]$ where  $k,m-k \ge m/2-{\sqrt{m/6}}$.\qed \label{strong}\end{thm}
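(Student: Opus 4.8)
The plan is to split the statement into a \emph{structural} step and an \emph{optimization} step, with all the difficulty concentrated in the former. The optimization step is elementary: suppose one already knows $A\prec[\0_k\,T_k]\times[\0_{m-k}\,T_{m-k}]$ for some partition of the $m$ rows into a chain on $k$ rows and a chain on $m-k$ rows. Since this product is simple with exactly $(k+1)(m-k+1)$ columns and $A$ is a simple $m$-rowed submatrix, $\ncols{A}\le(k+1)(m-k+1)$. Writing $k=\lfloor m/2\rfloor+t$ and using $\forb(m,F_2)=\lfloor m^2/4\rfloor+m+1=(\lfloor m/2\rfloor+1)(\lceil m/2\rceil+1)$, the product count equals the maximum minus $t^2$ (exactly $(n+1)^2-t^2$ when $m=2n$; the parity of $m$ only shifts the optimal split by a constant). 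Combined with the hypothesis $\ncols{A}\ge\forb(m,F_2)-m/6+c$ this forces $t^2\le m/6-c<m/6$, i.e.\ $|t|<\sqrt{m/6}$, which is precisely $k,\,m-k\ge m/2-\sqrt{m/6}$.

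So the whole theorem reduces to the structural claim that a near-extremal $A\in\Av(m,F_2)$ embeds in a product of two chains. I would reformulate this in terms of the rows of $A$ regarded as subsets of the column index set. Two rows fail to be nested exactly when $I_2$ occurs on that pair; call such a pair a \emph{conflict}. A partition of the rows into two pairwise-nested classes is the same as a proper $2$-colouring of the conflict graph, and each class yields a $[\0\,T]$ factor, since nested rows force every column to restrict to a prefix on that class. As the conflict graph is the incomparability graph of a poset it is perfect, so it is bipartite iff it is triangle-free, i.e.\ iff the rows contain no three pairwise-incomparable sets. Thus the structural claim is equivalent to: near-extremal $A$ has no $3$-antichain of rows. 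The size hypothesis is genuinely needed here, as $I_3$ is $F_2$-free (it has only three columns, too few to contain the four-column $F_2$) yet its rows form a $3$-antichain, so no unconditional structure theorem can hold.

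The engine I would prove is a quantitative defect lemma: if the rows of an $F_2$-avoiding $A$ contain a $3$-antichain, then $\ncols{A}\le\forb(m,F_2)-m/6$ (with $c$ supplying slack). The mechanism is that a product of \emph{three} chains already contains $F_2$, since choosing one row from each chain realizes all four columns $110,101,010,001$; hence a true $3$-antichain of rows cannot be completed to the full grid without creating $F_2$, and the resulting loss must be shown to be linear in $m$. I would establish this via the standard decomposition: delete a well-chosen row $r$ and observe that the repeated block $B_r$ must avoid $I_2$ (placing $I_2$ in $B_r$ beneath the split row reconstructs $F_2$), so $B_r$ is a single chain with at most $m$ columns; then induct on $m$ while tracking the deficiency $\forb(m,F_2)-\ncols{A}$. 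The main obstacle is exactly this accounting, namely turning the local obstruction of a $3$-antichain into a guaranteed column loss exceeding $m/6$ and verifying that the induction does not leak deficiency through the choice of $r$; once the row poset is forced to width two, the telescoping optimization of the first paragraph finishes the proof.
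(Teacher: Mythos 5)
Your optimization step is correct and is exactly the paper's closing count, and your reformulation is clean and valid: $A\prec[\0_k\,T_k]\times[\0_{m-k}\,T_{m-k}]$ if and only if the rows, viewed as subsets of the column set, partition into two chains, which by Dilworth (or perfection of incomparability graphs) holds iff there is no $3$-antichain. The problem is that the proposal stops exactly where the theorem begins: your ``quantitative defect lemma'' (a $3$-antichain of rows forces $\ncols{A}\le\forb(m,F_2)-m/6$) is, after your own reduction, equivalent to the entire theorem, and you offer only a plan for it, conceding that ``the main obstacle is exactly this accounting.'' Moreover, the one concrete tool you name is quantitatively too weak: the repeated block in the standard decomposition (the paper's $C_r$; your $B_r$ is the paper's name for a different block) indeed avoids $I_2$ since $F_2=[0\,1]\times I_2$, but the resulting bound $\ncols{C_r}\le m$ gives only $\ncols{A}\le\forb(m-1,F_2)+m$, whereas $\forb(m,F_2)-\forb(m-1,F_2)\approx m/2$; a row-by-row induction tracking the deficiency with this bound \emph{loses} roughly $m/2$ of budget per deleted row and can never certify a loss of $m/6$. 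Likewise, the observation that the full product of three chains contains $F_2$ only shows that a $3$-antichain blocks \emph{some} columns; nothing in the sketch converts this local obstruction into a loss growing linearly in $m$.

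The missing idea, which is how the paper actually closes this gap, is to exploit the standard decomposition at \emph{every} row simultaneously rather than inducting one row at a time. The size hypothesis forces $\ncols{C_r}>m/3$ for each $r\in[m]$; since $I_2$ is the unique inductive child, $C_r$ is a chain whose simple support is a row set $R_r$ with $|R_r|=\ncols{C_r}-1$, and a pigeonhole on triples of rows upgrades this to a statement about all of $A$: for $i,j\in R_r$ the submatrix $[\0_2\,T_2]$ occurs in $C_r$ under both values of row $r$, giving $6$ distinct columns on rows $\{r,i,j\}$, and since $\forb(3,F_2)=6$, any $\left[\begin{smallmatrix}0\\1\end{smallmatrix}\right]$ in $A|_{R_r}$ would create a seventh column and hence $F_2$; so $A$ itself is a chain on the rows $R_r$. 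Next, for $r'\in R_r$ one shows $R_r\cap R_{r'}=\emptyset$, and since every $|R_r|$ exceeds $m/3$, three pairwise disjoint such supports cannot fit in $[m]$: only two supports exist and they partition $[m]$, yielding the global two-chain structure. This cross-row disjointness argument is precisely the mechanism your deficiency-tracking induction lacks, and without it (or a substitute of comparable strength) the proposal does not prove the theorem.
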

\begin{thm}\label{thm:F1221stab}
Let $m \geq 8$ and $A\in\Av(m,F(1,2,2,1)$ with  $\ncols{A} \geq (\lfloor \frac{m^2}{4} \rfloor + m + 1) - (\lfloor \frac{m}{2} \rfloor - 2)$, then $A \prec [\1_k\,\,I^c_k] \times [\0_{m-k}\,\,I_{m-k}]$ for some $k$.\qed
    \end{thm}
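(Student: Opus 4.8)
The plan is to first recast the conclusion as a purely combinatorial partition problem. Write $M_k=[\1_k\,\,I^c_k] \times [\0_{m-k}\,\,I_{m-k}]$, so $\ncols{M_k}=(k+1)(m-k+1)$. A column of $M_k$ restricted to the top $k$ rows has at most one $0$, and restricted to the bottom $m-k$ rows has at most one $1$; conversely, any $m$-rowed simple matrix enjoying that property for some partition of its rows into a $k$-set $U$ and $L=[m]\setminus U$ is a sub-configuration of $M_k$, since distinct columns of $A$ must match distinct columns of $M_k$. Hence $A\prec M_k$ for some $k$ if and only if there is a partition $[m]=U\sqcup L$ such that no column of $A$ is $0$ on two rows of $U$ and no column is $1$ on two rows of $L$. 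Introducing the graph $G_0$ on $[m]$ with $ab\in G_0$ whenever some column is $0$ in both rows $a,b$, and $G_1$ with $ab\in G_1$ whenever some column is $1$ in both, the theorem becomes exactly the statement that one can choose $U$ independent in $G_0$ with $\bar U$ independent in $G_1$ (a $2$-colouring / $2$-SAT feasibility question). Note this can fail for sparse $A$ (e.g. $\{\0_3,\1_3\}$ admits no such partition), so near-extremality must be doing the work.

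To locate the partition I would use row weights. For row $i$ let $z_i$ be the number of columns that are $0$ in row $i$. In $M_k$ the $k$ top rows have $z_i=m-k+1=O(m)$ while the $m-k$ bottom rows have $z_i=(k+1)(m-k)=\Omega(m^2)$, so the two classes are separated by a gap of order $m^2$. I would set $U$ to be the rows with $z_i$ below a threshold of order $m$ and $L$ the rest, put $k=\ncols{U}$, and argue that for near-extremal $A$ this is a genuine dichotomy: there are no rows of ``medium'' weight, and $(k+1)(m-k+1)$ is within $\lfloor m/2\rfloor-2$ of $\forb(m,F(1,2,2,1))$.

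The forbidden configuration enters through pairs of rows. For rows $a,b$ let $(n_{00},n_{10},n_{01},n_{11})$ count the four column types; avoiding $F(1,2,2,1)$ says precisely that whenever $n_{00}\ge 1$ and $n_{11}\ge 1$ we must have $n_{10}\le 1$ or $n_{01}\le 1$. I would use this in two ways. First, if two rows $a,b\in U$ shared a $0$-column (a $G_0$-edge inside $U$), then, both having few $0$'s, $n_{11}$ is large and the rule forces $n_{10}\le1$ or $n_{01}\le1$, i.e.\ the $0$-sets of $a$ and $b$ are nested up to one exception; I would show such a nested pair inside $U$ cannot coexist with a column count this close to the product maximum. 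The symmetric statement, using $n_{11}$ on pairs inside $L$, handles stray $1$-columns. Second, the same inequality, applied to a putative medium row against the rows of $U$ and of $L$, is what rules medium rows out and pins down $k$. Once no $G_0$-edge lies inside $U$ and no $G_1$-edge inside $L$, the partition is valid and $A\prec M_k$.

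The main obstacle is exactly the quantitative rigidity invoked above: I must prove that any single deviation from the exact product shape---a medium row, a $G_0$-edge inside $U$, or a $G_1$-edge inside $L$---forces the loss of strictly more than $\lfloor m/2\rfloor-2$ columns, which is what makes this precise deficiency the correct threshold. I expect this to rest on a careful global count comparing $\ncols{A}$ with $(k+1)(m-k+1)$, tracking how a nested or conflicting pair removes an entire near-$\tfrac{m}{2}$-sized family of admissible columns; the hypothesis $m\ge8$ should clear away the degenerate small cases where the $I^c_k$ or $I_{m-k}$ blocks are too short for these counts to bite.
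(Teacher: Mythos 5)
Your opening reduction is correct and matches the endgame of the paper's proof: $A\prec [\1_k\,I_k^c]\times[\0_{m-k}\,I_{m-k}]$ if and only if the rows split into $U\sqcup L$ with no column containing $\zerozero$ inside $U$ nor $\oneone$ inside $L$ (this is the paper's Lemma on matrices avoiding $\left[\begin{smallmatrix}0\\0\end{smallmatrix}\right]$ or $\left[\begin{smallmatrix}1\\1\end{smallmatrix}\right]$), and your pairwise restatement of avoiding $F(1,2,2,1)$ --- if $n_{00}\ge 1$ and $n_{11}\ge 1$ then $n_{10}\le 1$ or $n_{01}\le 1$ --- is exactly the property the paper exploits. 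But everything after that is a plan, not a proof, and you say so yourself: the ``quantitative rigidity'' step, that any medium-weight row, any $G_0$-edge inside $U$, or any $G_1$-edge inside $L$ costs strictly more than $\lfloor m/2\rfloor-2$ columns, is the entire content of the theorem, and no mechanism for it is given. The row-weight dichotomy is asserted from the shape of the extremal matrix, but near-extremal matrices need not inherit it a priori; a threshold argument ``of order $m$'' on the weights $z_i$ has no visible way to produce the sharp constant $\lfloor m/2\rfloor - 2$, and the claim that a nested pair of $0$-sets inside $U$ destroys a near-$\frac m2$-sized family of columns is plausible but unsubstantiated.

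The paper gets the exact threshold from a quite different device that your sketch lacks: a global conversion of the column surplus into edges of a graph. As long as $\ncols{A_i}>m+1$, Sauer--Shelah gives a $K_2$ on some pair of rows; avoidance of $F(1,2,2,1)$ makes one of $\left[\begin{smallmatrix}0\\1\end{smallmatrix}\right]$, $\left[\begin{smallmatrix}1\\0\end{smallmatrix}\right]$ occur exactly once there, so deleting that column yields a directed edge, and iterating produces a \emph{procedure graph} with exactly $\ncols{A}-m-1$ distinct edges. Short arguments show this graph is simple, has no directed cycles and no single-edge transitivity, hence is triangle-free; the hypothesis then gives $|E|\ge \lfloor m^2/4\rfloor-\lfloor m/2\rfloor+2$, which exceeds the maximum edge count of a non-bipartite triangle-free graph --- this is precisely where $\lfloor m/2\rfloor-2$ comes from, via the stability form of Mantel's theorem. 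Bipartiteness supplies your partition $(U,L)=(V,W)$ for free, and the remaining work (two full-degree vertices on each side, diamond-freeness, and ``primary forcing'' along five edges) shows all edges orient one way and that pairs of rows in $W$ avoid $\left[\begin{smallmatrix}0\\0\end{smallmatrix}\right]$ while pairs in $V$ avoid $\left[\begin{smallmatrix}1\\1\end{smallmatrix}\right]$, at which point your reduction closes the proof. Without some analogue of this surplus-to-edges count, your approach has no source for the precise deficiency bound, so the proposal as it stands has a genuine gap at its central step.
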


Proofs are in \srf{sec:F2}.
\vskip 10pt

\srf{sec:ext-1} contains results when moving one away from the extremal value we can still characterize the matrices in $\Av(m,F)$ for some $F$.
 The extremal matrices $A\in\ext(m, F(0, 1, 2, 0))$ and $A\in\ext(m, F (0, 1, 3, 0))$ were characterized in \cite{extremalpaper} using graph theory. The ideas  
provide a modest stability result characterizing  properties of $A\in\Av(m, F (0, 1, 3, 0))$ when $\ncols{A}$ is within 1 of the bound.

Analogously,  $\Av(m,2\cdot[\1_3\,\,\1_2\0_1])$ is dominated by
$\ext(2\cdot\,\1_3)=\ext(m,\1_4)$ when $\ncols{A}$ is within 1 of the bound.
\begin{thm}$\ext(m,2\cdot[\1_3\,\,\1_2\0_1])=\ext(m,\1_4)=[K_m^3K_m^2K_m^1K_m^0]$.
 Moreover if $A\in\Av(m,2\cdot[\1_3\,\,\1_2\0_1])$ and 
 $\ncols{A}\ge \forb(m,\1_4)-1$, then $A\prec [K_m^3K_m^2K_m^1K_m^0]$.\qed\label{2(13 1201)}\end{thm}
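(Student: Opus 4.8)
The plan is to classify the columns of $A$ by their column sum and to show that a single column of sum at least $4$ is too expensive to coexist with nearly $\forb(m,\1_4)$ columns. First observe that, since $[K_m^3K_m^2K_m^1K_m^0]$ consists of exactly the columns of sum at most $3$ and is invariant under row permutations, $A\prec[K_m^3K_m^2K_m^1K_m^0]$ holds if and only if every column of $A$ has sum at most $3$. Thus the extremal claim amounts to $\forb(m,2\cdot[\1_3\,\,\1_2\0_1])=\forb(m,\1_4)$ together with uniqueness of the extremal matrix, and the stability claim amounts to showing that $\ncols{A}\ge\forb(m,\1_4)-1$ forces every column to have sum at most $3$. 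The lower bound $\forb(m,2\cdot[\1_3\,\,\1_2\0_1])\ge\forb(m,\1_4)$ is immediate: a column of $[K_m^3K_m^2K_m^1K_m^0]$ restricting to $(1,1,1)^T$ on three rows must have support exactly those rows, so two such columns never occur and the configuration (which requires them) is avoided.

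The engine is a bound on $S_{ij}$, the set of columns of $A$ with a $1$ in both rows $i$ and $j$; I would show $|S_{ij}|\le m-1$ for every pair $\{i,j\}$. Restrict the columns of $S_{ij}$ to the other $m-2$ rows: avoidance of $2\cdot[\1_3\,\,\1_2\0_1]$ says exactly that in each such coordinate the minority value occurs at most once, since two columns with a $1$ and two with a $0$ in some row $k$ would be four columns showing $(1,1,1)^T,(1,1,1)^T,(1,1,0)^T,(1,1,0)^T$ on rows $i,j,k$. When $|S_{ij}|\ge 3$ the coordinatewise majority vector $\mu$ is well defined, and in any coordinate where a column differs from $\mu$ it is the unique minority there, so the difference sets of distinct columns are pairwise disjoint nonempty subsets of an $(m-2)$-set; hence $|S_{ij}|\le 1+(m-2)=m-1$. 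This structural lemma is the technical heart of the proof.

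Summing over pairs gives the budget $\sum_{\text{columns }c}\binom{|R_c|}{2}=\sum_{\{i,j\}}|S_{ij}|\le\binom{m}{2}(m-1)$, where $R_c$ is the support of $c$. Writing $N_s$ for the number of columns of sum $s$ and $N_{\ge4}$ for those of sum at least $4$, a column of sum $\ge4$ costs at least $\binom{4}{2}=6$ in this budget, a sum-$3$ column costs $3$, and a sum-$2$ column costs $1$; combining $N_2\le\binom{m}{2}$ with the budget and a short computation yields
\[
\ncols{A}\le \forb(m,\1_4)-N_{\ge4}.
\]
Taking $N_{\ge4}=0$ recovers the bound, and equality forces $N_0=1,\ N_1=m,\ N_2=\binom{m}{2},\ N_3=\binom{m}{3}$ and $N_{\ge4}=0$, i.e. every column of sum at most $3$ is present and no other, so $A=[K_m^3K_m^2K_m^1K_m^0]$. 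This gives uniqueness of the extremal matrix and hence $\ext(m,2\cdot[\1_3\,\,\1_2\0_1])=\ext(m,\1_4)$.

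For stability the displayed inequality already forces $N_{\ge4}\le1$ once $\ncols{A}\ge\forb(m,\1_4)-1$, so the main remaining obstacle is to exclude the borderline case $N_{\ge4}=1$. Here equality holds throughout: every sum-$2$ column is present, the unique large column $c^{*}$ has sum exactly $4$ (say with support $\{1,2,3,4\}$), and every pair satisfies $|S_{ij}|=m-1$. I would derive a contradiction from $S_{12}$: with $|S_{12}|=m-1$ on $m-2$ coordinates the difference sets must be all $m-2$ singletons, so $S_{12}$ restricted to the other rows equals $\{\mu\}\cup\{\mu\oplus e_k\}$ over those coordinates. Requiring both the restriction $\mathbf{0}$ of the present sum-$2$ column on $\{1,2\}$ and the restriction (the sum-$2$ pattern on rows $3,4$) of $c^{*}$ to lie in this set pins $\mu$ down to a single coordinate, which in turn forces a second column of sum $\ge4$ (such as the one with support $\{1,2,3,5\}$), contradicting $N_{\ge4}=1$ when $m\ge5$. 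The small cases—most notably $m=4$, where $c^{*}=\1_4$ can be shown directly to allow at most one column of sum $3$—I would dispatch by hand. Hence $N_{\ge4}=0$, and $A\prec[K_m^3K_m^2K_m^1K_m^0]$.
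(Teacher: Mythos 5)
Your proposal is correct, and I checked the arithmetic that you left implicit: indeed $\binom{m}{2}(m-1)=\binom{m}{2}+3\binom{m}{3}$, so your budget $\sum_{c}\binom{|R_c|}{2}=\sum_{\{i,j\}}|S_{ij}|\le\binom{m}{2}(m-1)$ together with $N_0\le 1$, $N_1\le m$, $N_2\le\binom{m}{2}$ and $\binom{s}{2}\ge 6$ for $s\ge 4$ does yield $\ncols{A}\le\forb(m,\1_4)-N_{\ge 4}$, the forced equalities in the borderline case ($N_2=\binom{m}{2}$, $s^*=4$, $N_3=\binom{m}{3}-2$, $|S_{ij}|=m-1$ for all pairs) all follow, and your pinning of $\mu$ to $e_3$ or $e_4$ does manufacture a second weight-$4$ column $\{1,2,3,k\}$, $k\ge 5$; the $m=4$ case closes as you say because $\1_4$ plus all sum-$2$ columns permits at most one sum-$3$ column while the equalities demand $N_3=\binom{4}{3}-2=2$. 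However, your route is genuinely different from the paper's. The paper runs standard induction \rf{standard}: the inductive child of $2\cdot[\1_3\,\,\1_2\0_1]$ is $2\cdot\1_2$, so $C_r\in\Av(m-1,2\cdot\1_2)$; a pigeonhole on pairs of rows shows a column of sum $k\ge 3$ in $C_r$ costs $\binom{k}{2}-1\ge 2$ columns, forcing $\ncols{C_r}\in\{\forb(m-1,\1_3),\forb(m-1,\1_3)-1\}$; the exact case gives $C_r=K_{m-1}^{\le 2}$ for all $r$ and hence uniqueness, while the deficient case uses induction on $m$ to get $[B_rC_rD_r]=[K_{m-1}^3K_{m-1}^2K_{m-1}^1K_{m-1}^0]$ and then exhibits an explicit copy of the configuration to kill any sum-$4$ column. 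Your argument replaces the induction by a single global double count: the lemma $|S_{ij}|\le m-1$ (via the majority vector and pairwise disjoint difference sets — the same underlying ``multiplicity two localizes'' phenomenon the paper exploits through $C_r\in\Av(m-1,2\cdot\1_2)$, but applied to the column classes $S_{ij}$ of $A$ itself rather than to the standard decomposition). What your approach buys: it is self-contained (no appeal to $\forb(m-1,\1_3)$ or an inductive hypothesis), and the explicit deficiency inequality $\ncols{A}\le\forb(m,\1_4)-N_{\ge 4}$ is quantitatively stronger, showing each heavy column costs at least one column from the bound, with uniqueness of the extremal matrix read off directly from the forced equalities. What the paper's approach buys: brevity, reuse of the standard forbidden-configuration machinery (inductive children), and the structural conclusion $[B_rC_rD_r]=[K_{m-1}^3K_{m-1}^2K_{m-1}^1K_{m-1}^0]$ for free from the induction, so the borderline case needs only a short configuration hunt rather than your tightness analysis with its separate small-$m$ bookkeeping.
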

The special case $k=2$ of the foremost theorem in forbidden configurations, Theorem~\ref{thm:sauer} is used in Section~\ref{sec:F-FT}.
\begin{thm}\label{thm:sauer}\cite{sauer,shelah,vapnik}
    \[
    \forb(m,K_k)=\binom{m}{k-1}+\binom{m}{k-2}+\ldots+\binom{m}{0}.\qed
    \]
\end{thm}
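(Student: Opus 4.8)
The plan is to prove the two matching inequalities separately, translating the configuration-avoidance condition into the language of set shattering. Identify each column of a simple $m$-rowed matrix with the subset of $[m]$ recording its $1$-entries, so that a simple matrix corresponds to a family of distinct subsets of $[m]$. With this dictionary, $K_k \prec A$ says exactly that some set $S$ of $k$ rows is \emph{shattered}: the restriction of the columns of $A$ to the rows in $S$ realizes all $2^k$ patterns in $\{0,1\}^k$. Thus $A \in \Av(m,K_k)$ precisely when the associated family shatters no $k$-element set. For the lower bound I would take $A=[K_m^0\,K_m^1\,\cdots\,K_m^{k-1}]$, the simple matrix of all columns of sum at most $k-1$; it has $\sum_{i=0}^{k-1}\binom{m}{i}$ columns, and it avoids $K_k$ because realizing the all-ones pattern $\1_k$ on any chosen $k$ rows would require a column with at least $k$ ones, which does not occur. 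Hence $\forb(m,K_k)\ge\sum_{i=0}^{k-1}\binom{m}{i}$.

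For the upper bound I would argue by induction on $m$, proving the bound for all $k\ge 1$ at once. Given $A\in\Av(m,K_k)$, split its columns according to their entry in row $m$ and form two simple matrices on the rows $[m-1]$. Let $A'$ be the \emph{trace}: delete row $m$ and keep one copy of each resulting distinct column. Let $A''$ be the matrix whose columns are those $(m-1)$-rowed columns $c$ for which \emph{both} the extension of $c$ by a $0$ in row $m$ and the extension by a $1$ occur as columns of $A$. Counting the columns of $A$ by their traces gives the exact identity $\ncols{A}=\ncols{A'}+\ncols{A''}$, since a trace arises from two columns of $A$ exactly when it is a column of $A''$, and from a single column otherwise.

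It then remains to bound $A'$ and $A''$. The trace $A'$ still avoids $K_k$: a $k$-set $S\subseteq[m-1]$ shattered by $A'$ is realized pattern-by-pattern by columns of $A$ (their restriction to $S\subseteq[m-1]$ equals the trace's), so $S$ would be shattered by $A$. The matrix $A''$ avoids $K_{k-1}$: if some $(k-1)$-set $S\subseteq[m-1]$ were shattered by $A''$, then for each $P\subseteq S$ a witnessing column $c\in A''$ with $c\cap S=P$ supplies, by definition, two columns of $A$ agreeing with $P$ on $S$ but differing in row $m$; ranging over all $P\subseteq S$ yields all $2^k$ patterns on the $k$-set $S\cup\{m\}$, contradicting $A\in\Av(m,K_k)$. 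By induction,
\[
\ncols{A}\le\sum_{i=0}^{k-1}\binom{m-1}{i}+\sum_{i=0}^{k-2}\binom{m-1}{i}=\sum_{i=0}^{k-1}\binom{m}{i},
\]
the final equality being Pascal's identity $\binom{m-1}{i}+\binom{m-1}{i-1}=\binom{m}{i}$ summed termwise. The base case $m=0$ is immediate (at most one empty column, matching $\binom{0}{0}=1$), and for $k=1$ avoidance of $K_1$ forces every row to be constant, hence a single column, so the induction on $m$ may invoke the result at $m-1$ for both $K_k$ and $K_{k-1}$ whenever $k\ge 2$.

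The main obstacle is the dimension-gaining step for $A''$: verifying that the pairing condition in row $m$ lowers the shattering threshold by exactly one, i.e.\ that a shattered $(k-1)$-set for $A''$ lifts to a shattered $k$-set for $A$ via the distinguished row. This is the only genuinely non-mechanical point; once it and the exact count $\ncols{A}=\ncols{A'}+\ncols{A''}$ are established, the Pascal recursion closes the induction automatically and meets the lower-bound construction, giving equality.
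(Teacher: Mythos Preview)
Your argument is correct; it is the classical inductive proof of the Sauer--Shelah--Vapnik--Chervonenkis bound. Note, however, that the paper does not supply its own proof of this theorem: it is stated with a \qed and attributed to the references \cite{sauer,shelah,vapnik}, so there is no in-paper proof to compare against.

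That said, it is worth pointing out that your decomposition $\ncols{A}=\ncols{A'}+\ncols{A''}$ is precisely the paper's ``standard induction'' \rf{standard}--\rf{induction}: your trace $A'$ is the paper's $[B_rC_rD_r]$, and your doubled-column matrix $A''$ is the paper's $C_r$. Your key observation that $A''\in\Av(m-1,K_{k-1})$ is exactly the paper's notion of an \emph{inductive child}: since $K_k=[0\,1]\times K_{k-1}$, the inductive child of $K_k$ is $K_{k-1}$. So while the paper defers the proof to the literature, the machinery it sets up in the introduction would reproduce your argument verbatim.
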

Interestingly characterizing $\ext(m,K_k)$ is  hard \cite{survey}. 
 It was proven \cite{anstee1997sperner} 
    for any $m,k,t$ with $t+ k- 1 \le m$ and $k \ge 1$, that $\ext(m,K_k)$ even contains a matrix $A(m,k,t)$ with column sums $t, t + l, t + 2 .\ldots , t + k - 1$.

The most useful proof technique in forbidden configurations is called \emph{standard induction} \cite{survey}. Choose a row $r\in[m]$. Assume $A\in\Av(m,F)$. Permute the rows and columns of $A$ so that $r$ becomes the first row. After deleting row $r$ there may be repeated columns which we place in $C_r$ in the following \emph{standard decomposition} of $A$: \begin{equation} A = \begin{array}{c}
    r\rightarrow \\ \\
\end{array}
\left[\begin{array}{c@{}c@{}cc@{}c@{}c}
   0 & \cdots & 0 & 1 & \cdots & 1\\
   B_r & & C_r & C_r & & D_r\\ \end{array}\right], \label{standard}\end{equation} \noindent where $B_r$ are the columns that appear with a $0$ in row $r$ but don't appear with a $1$, and $D_r$ are the columns that appear with a $1$ but not a $0$. We note both $[B_rC_rD_r]$ and $C_r$ are simple $(m-1)$-rowed matrices. If we assume $A\in\Av(m,F )$, then $[B_rC_rD_r]\in\Av(m-1,F )$ and
   \begin{equation}\ncols{A} =\ncols{[B_rC_rD_r]}+\ncols{C_r}\le \forb(m-1,{ F})+\ncols{C_r} .\label{induction}\end{equation}
This means any upper bound on $\ncols{C_r}$ (as a function of $m$) automatically yields an upper bound on $\forb(m,{F})$ by induction.
Of course $C_r\in\Av(m-1,F)$ but more is true. Define the \emph{inductive children} of $F$ as the minimal set of configurations $\cF'$ which must be avoided in $C_r$. Potential candidates for an inductive child would be configurations $F'$ such that $[0\,1]\times F'$ cannot appear in $A$, i.e. $F\prec [0\,1]\times F'$. We ask for $\cF'$ to be \emph{minimal} to avoid having an unwieldly set. We require that if we have two configurations $F',F''$ with $F'\prec F''$ then $F''\notin \cF'$. With this definition, $C_r\in\Av(m-1,\cF')$ and $\ncols{C_r}\le \forb(m-1,\cF')$.

\section{$\mathbf{F=} \begin{bmatrix} 0 & 0 & 1 \\ 1 & 1 & 0 \\ \end{bmatrix}$, $\mathbf{F^T  =} \begin{bmatrix} 0 & 1 \\ 0 & 1 \\ 1 & 0 \end{bmatrix}$}\label{sec:F-FT}

Typically, the transpose of a matrix does not act the same way as the matrix itself, the values of $\mathrm{forb}(F)$, $\mathrm{forb}(F^T)$ and the extremal elements $\mathrm{ext}(F)$, $\mathrm{ext}(F^T)$ differ. 
However, in this particular case, the transpose $F^T$ acts extremely similarly to $F$. Note that $F=F(0,1,2,0)$.

The following proposition is  known \cite{survey}.
\begin{prop}\label{prop:noI2}
       If $A \in \Av(m,I_2)$, then the columns must be well-ordered, so:
    \begin{align}\label{eq:noI2}
        A \prec \begin{bmatrix} 1 & 0 & 0 & ... & 0 &0 \\
        1 & 1 & 0 & ... & 0 & 0\\
        1 & 1 & 1 & ... & 0 & 0\\
        1 & 1 & 1 & ... & 1 &0\\ \end{bmatrix}.
    \end{align}
    
\end{prop}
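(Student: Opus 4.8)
The plan is to translate avoidance of $I_2$ into a statement about the column supports and then to realize the resulting chain inside the displayed matrix $M$ of \rf{eq:noI2} by a single row permutation. Using the correspondence between a column and its support $S\subseteq[m]$ (the set of rows carrying a $1$), I would first observe that $I_2\prec A$ holds precisely when some two columns have \emph{incomparable} supports. Indeed, the only row-and-column permutations of $I_2$ are $I_2$ and $\left[\begin{smallmatrix}0&1\\1&0\end{smallmatrix}\right]$, so a copy of $I_2$ on rows $i,j$ in columns with supports $S,S'$ forces $i\in S\setminus S'$ and $j\in S'\setminus S$; conversely, any $i\in S\setminus S'$ and $j\in S'\setminus S$ produce such a copy. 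Hence $A\in\Av(m,I_2)$ if and only if the supports of its columns are pairwise comparable, i.e.\ they form a chain. Since $A$ is simple, the supports are distinct, and we may reorder the columns so that $S_1\subsetneq S_2\subsetneq\cdots\subsetneq S_n$.

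Next I would fix a row ordering that simultaneously turns every $S_i$ into a \emph{suffix} of $[m]$ (a set of the form $\{j,j+1,\dots,m\}$), which is exactly the support shape of the columns of $M$. To do this, extend the chain $S_1\subsetneq\cdots\subsetneq S_n$ to a maximal chain $\emptyset=T_0\subsetneq T_1\subsetneq\cdots\subsetneq T_m=[m]$ with $|T_i|=i$, and let $\pi(i)$ be the unique element of $T_i\setminus T_{i-1}$. Relabelling the rows by sending old row $\pi(i)$ to new position $m-i+1$ makes each $T_i$ equal to the suffix $\{m-i+1,\dots,m\}$; in particular every $S_j$, being some $T_i$, becomes a suffix.

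Finally I would read off the conclusion. After this row permutation every column of $A$ has a suffix support, so it coincides with one of the $m+1$ suffix-support columns of $M$ (the columns with supports $[m],\{2,\dots,m\},\dots,\{m\},\emptyset$). As a chain of subsets of $[m]$ has at most $m+1$ members we have $n\le m+1$, and distinct columns of $A$ give distinct suffixes, hence distinct columns of $M$; thus the columns of $A$ form a subset of the columns of $M$ and $A\prec M$.

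The steps are individually short, so the only point demanding care is the simultaneous linearization of the middle paragraph: one must check that a \emph{single} row permutation, obtained from a maximal chain refining the support chain, makes \emph{all} the supports into nested suffixes at once. This is where a naive attempt to straighten the columns one at a time would fail, and it is the reason for passing through the maximal-chain extension rather than arguing column by column.
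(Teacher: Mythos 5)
Your proof is correct, and it is essentially the argument the paper has in mind: the paper states this proposition without proof (citing it as known from \cite{survey}), and the phrase ``the columns must be well-ordered'' is exactly your observation that $A\in\Av(m,I_2)$ if and only if the column supports form a chain under inclusion. Your additional care about the simultaneous row permutation --- refining the support chain to a maximal chain $T_0\subsetneq T_1\subsetneq\cdots\subsetneq T_m$ and relabelling rows via the singletons $T_i\setminus T_{i-1}$ so that all supports become nested suffixes at once --- is sound and correctly fills in the one step a casual reading might gloss over.
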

We start with $F^T$, because it is simpler.
\begin{lemm}\label{lem:FT-block}
    Let $A \in \Av(F^T)$ and $A \not \in \Av(I_2)$. Then appropriately permuting rows and columns, $A = \left[\begin{smallmatrix} A_1 & 0 & 0 \\
1 & B & 0 \\
1 & 1 & A_2 \\ \end{smallmatrix}\right]$, where the simple matrices $A_1, A_2 \in \Av(F^T)$, and $B = I_m$ or $I_m^c$ for some $m \geq 2$.
\end{lemm}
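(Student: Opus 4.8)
The plan is to work with the columns of $A$ as subsets of $[m]$ and first translate the two hypotheses into a local condition on pairs. Writing $S_a,S_b$ for the supports of two columns $a,b$, the configuration $F^T$ appears on this pair exactly when $\min(|S_a\setminus S_b|,|S_b\setminus S_a|)\ge 1$ and $\max(|S_a\setminus S_b|,|S_b\setminus S_a|)\ge 2$. Hence $A\in\Av(F^T)$ forces every pair of columns to be either nested (comparable under $\subseteq$) or to satisfy $|S_a\setminus S_b|=|S_b\setminus S_a|=1$; I will call the latter an \emph{incomparable pair}, and note that incomparable columns have equal column sum and differ by a single transposition. The hypothesis $A\notin\Av(I_2)$ guarantees that at least one incomparable pair exists. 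I would also record at the outset that $F^T$ is self-complementary as a configuration, so $\Av(F^T)$ is closed under taking $(0,1)$-complements; this symmetry will let me halve the later case analysis, since it interchanges the two candidate shapes $I_m\leftrightarrow I_m^c$ and the roles of the top and bottom row-blocks.

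The structural core is to pin down a \emph{maximal} antichain $\mathcal{B}$, a maximal set of pairwise incomparable columns with $|\mathcal{B}|=m\ge 2$. First I would prove a three-column lemma: any three pairwise-incomparable columns are, after permuting rows, constant off three ``support'' rows and on those rows form either $I_3$ or $I_3^c$. This is a short finite case check starting from $b_1=T\cup\{x\}$, $b_2=T\cup\{y\}$ and classifying the transposition carrying $b_1$ to a third column $b_3$. An extension argument then shows the two shapes cannot be mixed inside one antichain, since adjoining an $I_3^c$-type column to an $I_3$-type triple would create $F^T$ with one of the original columns. Consequently the whole of $\mathcal{B}$, restricted to its set of support rows $R_2$ (with $|R_2|=m$), is exactly $I_m$ or $I_m^c$, is all-$1$ on a row-set $R_3$, and is all-$0$ on the remaining rows $R_1$. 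This identifies the middle block $B$ and the three row groups of the claimed form.

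Next I would place every remaining column $c\notin\mathcal{B}$, and by complement symmetry it suffices to treat $B=I_m$. The first step is to show $c$ is constant on $R_2$: if $c$ had both a $1$ (row $s_i$) and a $0$ (row $s_j$) on $R_2$, then comparing $c$ to the antichain member $b_j$ exhibits an incomparable pair, and $F^T$-avoidance forces $c$ to agree with $b_j$ off $\{s_i,s_j\}$, whence $c=b_i\in\mathcal{B}$, a contradiction. So each such $c$ is all-$1$ on $R_2$ (``high'') or all-$0$ on $R_2$ (``low''). For a high column I would show $c\supseteq R_3$: comparing $c$ with any $b_j$ gives $c\setminus b_j\supseteq R_2\setminus\{s_j\}$, while any missed row $u\in R_3$ lies in $b_j\setminus c$; when $m\ge 3$ this already yields $F^T$, and when $m=2$ the only escape makes $c$ incomparable to every member of $\mathcal{B}$, contradicting the maximality of $\mathcal{B}$. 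Thus high columns contain $R_2\cup R_3$ and, restricted to $R_1$, form a simple $F^T$-avoiding matrix $A_1$; dually, by the complement symmetry, low columns are contained in $R_3$ and form $A_2$. Assembling the three column groups against the three row groups gives exactly the stated block form.

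The main obstacle I anticipate is the antichain analysis, and specifically the necessity of choosing $\mathcal{B}$ \emph{maximal} rather than just a single incomparable pair: a non-maximal antichain genuinely admits extra columns (the $m=2$ computation above produces an $I_3^c$-type column incomparable to both members), and only maximality rules these out and forces the clean nesting of the high and low columns. Preventing the $I_m$ versus $I_m^c$ dichotomy from contaminating each other, and organizing the complement symmetry so that the $I_m^c$ case is never argued separately, are the places where care is most needed; the remaining computations are routine once the support rows $R_1,R_2,R_3$ are fixed.
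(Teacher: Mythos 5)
Your overall plan is essentially the paper's proof translated into set-system language: your maximal antichain $\mathcal{B}$ plays the role of the paper's maximum $I_m$ or $I_m^c$ configuration, your triple lemma recovers the paper's observation that any two columns of that configuration agree off its rows (yielding the row split $R_1,R_2,R_3$), and your ``constant on $R_2$'' step compresses the paper's four-case column classification into one single-transposition forcing. Those steps all check out, including the $m=2$ escape via maximality and the uniformity of type inside the antichain.

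The one genuine gap is the clause ``dually, by the complement symmetry, low columns are contained in $R_3$.'' That use of symmetry is circular: you already spent the complementation at the outset to assume $\mathcal{B}$ is of $I$-type, and complementing again turns your picture into the $I^c$-type picture, where your high-column computation --- which explicitly used the $I$-type supports via $S_c\setminus S_{b_j}\supseteq R_2\setminus\{s_j\}$ --- has not been established. Within the fixed $I$-type case the two corner claims are not complements of each other, and they genuinely require different arguments; the paper proves them separately: a $1$ in a low column on a row of $R_1$ is excluded by \emph{maximality} (it would extend $B$ to an $I_{m+1}$), while a $0$ in a high column on a row of $R_3$ gives $F^T$ directly when $m\geq 3$ (and an $I_3^c$, again maximality, when $m=2$). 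The fix in your framework is short: if a low column $c$ has a $1$ at some $u\in R_1$, then comparing with $b_j$ gives $u\in S_c\setminus S_{b_j}$ and $s_j\in S_{b_j}\setminus S_c$, so $F^T$-avoidance forces the exact transposition $S_c=R_3\cup\{u\}$, which makes $c$ incomparable to \emph{every} member of $\mathcal{B}$, contradicting maximality of the antichain (this is precisely the $I_{m+1}$-extension in disguise). Note that here, unlike your high-column case with $m\geq 3$, no direct $F^T$ arises, so maximality is indispensable --- confirming that the step cannot be waved through by symmetry. With that patch, together with the routine verifications you state (simplicity of $A_1,A_2$ and inheritance of $F^T$-avoidance), the proof is complete.
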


\begin{proof}
    There is some configuration of $I_2$ in $A$. So there must be some configuration of $I_m$ or $I_m^c$, $m \geq 2$, with a maximum value of $m$ (number of columns/rows). Without loss of generality let it be an $I_m$ not an $I_m^c$ (the rest of the proof would otherwise follow symmetrically replacing $0$'s with $1$'s and vice versa). Let $B$ be this configuration. Let $S$ denote the columns in $B$, and $R$ denote the rows in $B$.

    \vspace {0.1 in}
    \noindent
    Consider any two columns in $S$, and assume for the sake of contradiction they differ on any row not in $R$. Then there would be a $\begin{smallmatrix} 0 & 1 \end{smallmatrix}$ on this row, as well as a $\begin{smallmatrix} 1 & 0 \\ 0 & 1 \end{smallmatrix}$ on the two rows in $R$ where these columns have their $1$ entry (shown below). This forms a configuration $F^T$, a contradiction. 
    \begin{align*}
        \begin{bmatrix}
            & \begin{matrix} 0 & 1 & \hspace{0.5 in} \end{matrix}   \\
            & ... & \\
            & \begin{bmatrix} 1 & 0 &  \\
            0 & 1 &  \\
             &  & \ddots \\ \end{bmatrix}  \\
        \end{bmatrix}
    \end{align*}
    \noindent
    So we can split the rows not in $R$ into two categories, those where every column in $S$ has a $0$ entry (call this $R_0$), and those where every column in $S$ has a $1$ entry (call this $R_1$).

    \vspace{0.1 in}
    \noindent
    Consider a column not in $S$, and its entries on the rows in $R$. There are four cases:
    \begin{enumerate}
        \item There are multiple $1$'s and a $0$ among these entries. Then we get a configuration $F^T$ with a complementing column in $S$ (shown first below).
        \item There is exactly one $1$ among these entries. For $A$ to be simple, it must differ from the column in $S$ with a $1$ in the same place, somewhere outside of $R$. Then it forms $F^T$ with any other column in $S$. (Shown second below is the case when the difference of the two columns consists of an entry 0 in the lower left corner. The case when the difference is an entry 1 in the upper left corner is analogous.)
        \item The column has all $0$'s on rows in $S$, then we say this is a column in $S_0$.
        \item The column has all $1$'s on rows in $S$, then we say this is a column in $S_1$.
    \end{enumerate}
    So our matrix can be written in the third form below.
    \begin{align*}
      \begin{bNiceArray}{c|cccc}
   & \Block{1-4}<\Large>{\mathbf{0}} \\
  \hline
    0 & 1 & 0 & 0 & \\
    1 & 0 & 1 & 0 & \\
    1 & 0 & 0 & 1 & \\
    & & & & \ddots \\
  \hline
& \Block{1-4}<\Large>{\mathbf{1}} \\
\end{bNiceArray} 
\hspace{0.5 in}
\begin{bNiceArray}{c|ccc}
   & \Block{1-3}<\Large>{\mathbf{0}} \\
  \hline
    1 & 1 & 0 & \\
    0 & 0 & 1 & \\
    & & & \ddots \\
  \hline
0 & \Block{1-3}<\Large>{\mathbf{1}} \\
\end{bNiceArray}
\hspace{0.5 in}
\begin{bNiceArray}{c|c|c}
& \Block{1-1}<\Large>{\mathbf{0}} & \\
\hline
\Block{1-1}<\Large>{\mathbf{1}} & \mathbf{B} & \Block{1-1}<\Large>{\mathbf{0}} \\
\hline
& \Block{1-1}<\Large>{\mathbf{1}} & \\
\end{bNiceArray}
\end{align*}

\noindent
Now consider any entry of the matrix on a row in $R_0$ and a column in $S_0$. If this entry is $1$, then including this row and column with $R$ and $S$ would give us an $I_{m+1}$ (shown first below), contradicting the maximality of $m$. So the entry must be $0$.

\noindent
Consider any entry of the matrix on a row in $R_1$ and a column in $S_1$. Assume for the sake of contradiction this entry is $0$. If $m > 2$, this forms an $F^T$ with any column in $S$ (shown second below), a contradiction. If $m = 2$, it forms an $I_3^c$ (shown third below), contradicting the maximality of $m$. So the entry must be $1$. 

\noindent
Therefore $A$ has the block configuration shown fourth below, where $A_1, A_2$ hold the rest of the entries.
\begin{align*}
    \begin{bNiceArray}{c|ccc|cc}
& \Block{1-3}<\Large>{\mathbf{0}} & & \\
& 0 & 0 & 0 & 1 & \\
\hline
\Block{3-1}<\Large>{\mathbf{1}} & 1 & 0 & 0 & 0 & \Block{3-1}<\Large>{\mathbf{0}} \\
& 0 & 1 & 0 & 0 & \\
& 0 & 0 & 1 & 0 & \\
\hline
& \Block{1-3}<\Large>{\mathbf{1}} & \\
\end{bNiceArray}
\hspace{0.4 in}
\begin{bNiceArray}{cc|cc|c}
& & \Block{1-2}<\Large>{\mathbf{0}} & \\
\hline
\Block{3-1}<\Large>{\mathbf{1}} & 1 & 1 &  & \Block{3-1}<\Large>{\mathbf{0}} \\
& 1 & 0 & ... & \\
& 1 & 0 &  & \\
\hline
& 0 & 1 & \Block{2-1}<\Large>{\mathbf{1}} & \\
& & & & \\
\end{bNiceArray} \\
\begin{bNiceArray}{cc|cc|c}
& & \Block{1-2}<\Large>{\mathbf{0}} & \\
\hline
\Block{3-1}<\Large>{\mathbf{1}} & 1 & 1 & 0 & \Block{3-1}<\Large>{\mathbf{0}} \\
& 1 & 0 & 1 & \\
\hline
& 0 & 1 & 1  & \\
& & \Block{1-2}<\Large>{\mathbf{1}} & \\
\end{bNiceArray}
\hspace{0.4 in}
\begin{bNiceArray}{c|c|c}
\mathbf{A_1} & \Block{1-1}<\Large>{\mathbf{0}} & \Block{1-1}<\Large>{\mathbf{0}} \\
\hline
\Block{1-1}<\Large>{\mathbf{1}} & \mathbf{B} & \Block{1-1}<\Large>{\mathbf{0}} \\
\hline
\Block{1-1}<\Large>{\mathbf{1}} & \Block{1-1}<\Large>{\mathbf{1}} & \mathbf{A_2} \\
\end{bNiceArray}
\end{align*}

\noindent
Note that $A_1, A_2$ must be simple since $A$ is simple and the columns in them ($S_0, S_1$ respectively) are all identical to one another on the rest of the rows. It then follows immediately from $A \in \Av(F^T)$ that $A_1, A_2 \in \Av(F^T)$. 
\end{proof}

\begin{lemm}\label{lem:F-block}
    Let $A \in \Av(F)$ and $A \not \in \Av(I_2)$. Then appropriately permuting rows and columns, $A = \left[\begin{smallmatrix} A_1 & 0 & 0 \\
1 & B & 0 \\
1 & 1 & A_2 \\ \end{smallmatrix}\right]$, where the simple matrices $A_1, A_2 \in \Av(F)$, and $B$ contains $I_m$ or $I_m^c$ for some $m \geq 2$ and all its other rows are copies of the rows in the $I_m$ or $I_m^c$.
\end{lemm}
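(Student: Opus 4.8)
The plan is to follow the proof of \lrf{lem:FT-block} closely, adapting each step to the fact that $F$ is a $2\times 3$ configuration (rather than the $3\times 2$ configuration $F^T$), which is exactly what forces the extra repeated rows in $B$. Since $F=F(0,1,2,0)$ is self-complementary as a configuration (its $(0,1)$-complement $\left[\begin{smallmatrix}1&1&0\\0&0&1\end{smallmatrix}\right]$ is a row permutation of itself), complementing $A$ interchanges $I_m$ with $I_m^c$; hence I may assume the largest $I_m$ or $I_m^c$ configuration in $A$ is an $I_m$, with $m\ge 2$ since $A\notin\Av(I_2)$. I write $R=\{r_1,\dots,r_m\}$ for its rows and $S=\{c_1,\dots,c_m\}$ for its columns, so that $c_i$ has its unique $1$ within $R$ in row $r_i$.

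First I would analyse the columns. Fix any two rows $r_i,r_j\in R$; on these rows $c_i$ and $c_j$ show the patterns $\binom{1}{0}$ and $\binom{0}{1}$. If some third column $c\notin\{c_i,c_j\}$ were non-constant on $\{r_i,r_j\}$, then $c$ together with $c_i,c_j$ would display one $\binom{1}{0}$ and two $\binom{0}{1}$ (or symmetrically), i.e.\ a copy of $F$. Hence every column outside $S$ is constant on all of $R$, and I split these columns into those all-$0$ on $R$ (call them $S_0$) and those all-$1$ on $R$ (call them $S_1$).

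Next I would analyse the rows. For a row $r'\notin R$, restricting to $S$ gives a vector $v\in\{0,1\}^m$. If $v$ had two $1$'s, say in $c_a,c_b$, and a $0$ in some $c_c$, then rows $r_c,r'$ restricted to $c_a,c_b,c_c$ would read $\left[\begin{smallmatrix}0&0&1\\1&1&0\end{smallmatrix}\right]=F$, a contradiction. Therefore $v$ is all-$0$, all-$1$, or a single standard basis vector $e_k$; the first two cases place $r'$ in new row classes $R_0$ and $R_1$, while $v=e_k$ means $r'$ is a \emph{copy} of the identity row $r_k$. Collecting $R$ together with all copy rows as $\tilde R$, the submatrix $B$ of $A$ on $\tilde R\times S$ is exactly $I_m$ with some rows repeated -- precisely the structure the Lemma asserts. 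This is the one place the argument genuinely diverges from \lrf{lem:FT-block}: there the columns of $S$ had to agree off $R$, whereas here they may disagree, and $F$-avoidance forces every disagreement to occur on a copy row.

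Finally I would pin down the four constant cross-blocks, ordering the rows as $R_0,\tilde R,R_1$ and the columns as $S_1,S,S_0$. The blocks $(R_0,S)$ and $(R_1,S)$ are constant by definition of $R_0,R_1$. For an $S_1$-column $c'$ and a copy row $r'$ with $v=e_k$, reading rows $r_j,r'$ (any $j\ne k$) on columns $c',c_j,c_k$ yields two $\binom{1}{0}$'s and one $\binom{0}{1}$ unless $c'$ is $1$ on $r'$, giving $(\tilde R,S_1)=\mathbf 1$; the symmetric computation gives $(\tilde R,S_0)=\mathbf 0$. The block $(R_0,S_0)$ must be $\mathbf 0$, since a stray $1$ would extend the identity to an $I_{m+1}$, contradicting maximality. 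The hard case is $(R_1,S_1)$: here a stray $0$ does \emph{not} produce a larger $I_{m+1}^c$, so instead I exhibit a forbidden $F$ directly -- for $m\ge 3$, rows $r',r_i$ on columns $c',c_j,c_k$ (with $j,k\ne i$) give one $\binom{0}{1}$ and two $\binom{1}{0}$'s -- while the degenerate case $m=2$ produces an $I_3^c$, again contradicting maximality. Once all blocks are constant, $A$ has the asserted form; $A_1$ (on $R_0\times S_1$) and $A_2$ (on $R_1\times S_0$) are simple because the columns involved already agree on all other rows, and they avoid $F$ as submatrices of $A$. I expect this last cross-block step, which breaks the clean $R_0/R_1$ and $S_0/S_1$ symmetry and needs a separate $F$-versus-$I_3^c$ case split, to be the main technical obstacle.
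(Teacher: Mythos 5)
Your proposal is correct and takes essentially the same approach as the paper, which proves \lrf{lem:F-block} by transposing the proof of \lrf{lem:FT-block}: find a maximal $I_m$ or $I_m^c$, split the remaining columns into $S_0/S_1$ and the remaining rows into all-zero rows, all-one rows, and copies of the identity rows, then force the constant cross-blocks via $I_{m+1}$, a direct $F$, and the $m=2$ versus $I_3^c$ case. The only (harmless) organizational difference is that you define copy rows by agreement on $S$ alone and then derive their constant entries on $S_1$ and $S_0$ as part of the cross-block analysis, whereas the paper absorbs into $B$ exactly those rows identical to the $I_m$ rows on all columns and handles the rest through the transposed case~2 of \lrf{lem:FT-block}.
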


\begin{proof}
    The proof is almost identical to that of Lemma~\ref{lem:FT-block} on $A^T$ except $A^T$ is not simple.
    
    \noindent    
    As before, we start by finding the maximum $m$ so that $I_m$ or $I_m^c$ is in $A$ (WLOG say $I_m$). Let $S$ be the columns in this $I_m$. The first part of the argument is then exactly symmetric- we split the rest of the columns into those where every row in the $I_m$ has a $0$ entry $(S_0)$ or every row in the $I_m$ has a $1$ entry $(S_1)$.

    \noindent
    Next, we let $B$ be not just this $I_m$, but include also any rows identical (on all columns) to those in the $I_m$. Then for the second argument, case 1 holds symmetrically, and case 2 follows not from the simplicity of $A$ but from the inclusion in $B$ of any copies of its rows. 

    \noindent
    And from that point onwards, the rest of the proof of Lemma~\ref{lem:FT-block} applies to $F$  symmetrically. \qed
\end{proof}

\vskip 5pt

{\noindent{\bf Proof of Theorem~\ref{thm:F-FT-char}}}.     We proceed by induction on the number of columns of $A$. 
If $A\in\Av(m,I_2)$, then it is in the form of (\ref{eq:noI2}) by Proposition~\ref{prop:noI2}. 
 Thus $A$ is a matrix of the desired form, where every block $B_i$ is empty. 

   The base case is when $A$ has one column, and therefore must be in $\Av(m,I_2)$. Assume for induction that the theorem holds for matrices up to $n$ columns.

    Consider $A$ with $n+1$ columns. If $A \in \Av(I_2)$, we are done. If $A \not \in \Av(m,I_2)$, then by Lemma~\ref{lem:FT-block} or \ref{lem:F-block}, we can write $A$ as follows
    \begin{align*}
        A = \begin{bmatrix} 
        A_1 & 0 & 0 \\
        1 & B & 0 \\
        1 & 1 & A_2 \\
        \end{bmatrix},
    \end{align*}    
    where $B$ is some $I_m$ or $I_m^c$ for $m \geq 2$, and $B$ may contain repeated rows if $A \in \Av(m,F) \setminus \Av(m,F^T)$. We also know that $A_1, A_2$ are simple and still avoid $F$ or $F^T$. Then by our inductive hypothesis, they are subsets of a matrix of our desired form, so:
    \begin{align*}
        A \prec \begin{bNiceArray}{cccccc|c|cccccc}
            1 & B_1 & 0 & ... & 0 & 0 & \Block{5-1}<\Large>{\mathbf{0}} & \Block{5-6}<\Large>{\mathbf{0}} \\
            1 & 1 & 1 & ... & 0 & 0 & & & & & & \\
            1 & 1 & 1 & ... & 0 & 0 & & & & & & \\
            ... & ... & ... & ... & ... & ... & & & & & &  \\
            1 & 1 & 1 & ... & B_k & 0 & & & & & & \\
            \hline
            \Block{1-6}<\Large>{\mathbf{1}} & & & & & & B & \Block{1-6}<\Large>{\mathbf{0}} \\
            \hline
            \Block{5-6}<\Large>{\mathbf{1}} & & & & & & \Block{5-1}<\Large>{\mathbf{1}} & 1 & B_1' & 0 & ... & 0 & 0 \\
            & & & & & & & 1 & 1 & 1 & ... & 0 & 0 \\
            & & & & & & & 1 & 1 & 1 & ... & 0 & 0 \\
            & & & & & & & ... & ... & ... & ... & ... & ... \\
            & & & & & & & 1 & 1 & 1 & ... & B_l' & 0 \\
        \end{bNiceArray}
    \end{align*}

    Note that this is also in our desired form, with a new block $B$ on the diagonal with all the existing blocks $B_1, ..., B_k$ of $A_1$ and $B_1', ..., B_l'$ of $A_2$, completing the proof of this direction.

    $\Leftarrow$: Consider any subset $A$ of a matrix in our block form. 
    
    First assume the $B_i$ have no additional repeated rows. Consider any two columns in $A$. If they are not in the same block, one will be a strict subset of the other (in whichever rows the entry on the right column is $1$, so is the entry in the left column), so $F^T$ certainly cannot be formed of these two columns. If they are in the same block, they only differ in two rows with an $I_2$, so $F^T$ cannot be formed of them either. So $A \in \Av(m,F^T)$.

    Now relax the assumption and allow the $B_i$ to have additional repeated rows. Consider any two rows in $A$. If they are not in the same block, one will be a strict subset of the other (in whichever columns the entry on the top row is $1$, so is the entry in the bottom row), so $F$ cannot be formed of these two rows. If they are in the same block, they are either identical or only differ in two columns with an $I_2$, so $F$ cannot be formed of them either. So $A \in \Av(m,F)$. \qed

\begin{thm}
    $\mathrm{forb}(m,F) = \mathrm{forb}(m,F^T) = \lfloor \frac{3m}{2} \rfloor + 1$. Then a matrix in 
    $\mathrm{ext}(m,F)$ or $ \mathrm{ext}(m,F^T)$ takes the form of \rf{blockform}. For $m$ even, all blocks $B_i=I_2$. For $m$ odd, we either have $\lfloor\frac{m}{2}\rfloor$ blocks $B_i=I_2$ and one empty block, or $\lfloor\frac{m}{2}\rfloor-1$ blocks $B_i=I_2$ and one block, say $B_j$ with $B_j=I_3$ or $B_j=I_3^c$. 
\end{thm}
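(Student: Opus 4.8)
The plan is to lean entirely on \trf{thm:F-FT-char}, which already guarantees that every matrix in $\Av(m,F^T)$ (and, allowing repeated rows inside blocks, every matrix in $\Av(m,F)$) is a column-subset of a matrix in the block form \rf{blockform}. Consequently $\forb$ is attained by a full block-form matrix, and the entire problem collapses to counting columns of block forms and optimizing over the choice of blocks. The reduction I would prove first is a clean counting formula: if $M$ is a full block-form matrix on $m$ rows whose non-empty blocks $B_i$ are genuine (each an $I_{m_i}$ or $I_{m_i}^c$ with $m_i\ge 2$ and no extra repeated rows), then $\ncols{M}=m+1+t$, where $t$ is the number of non-empty blocks.

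I would establish this formula by induction via the decomposition of \lrf{lem:FT-block}, namely $M=\left[\begin{smallmatrix}A_1&0&0\\1&B&0\\1&1&A_2\end{smallmatrix}\right]$ with $B$ the maximal block on $m_0$ rows and $A_1,A_2$ on $a,b$ rows, $a+m_0+b=m$. Since $B$ is $I_{m_0}$ or $I_{m_0}^c$ it contributes exactly $m_0$ (its own number of rows) columns, so $\ncols{M}=\ncols{A_1}+m_0+\ncols{A_2}$. Writing $\ncols{A_1}=a+1+t_1$ and $\ncols{A_2}=b+1+t_2$ inductively, the two base ``$+1$'' terms combine with the single new block $B$ to give $\ncols{M}=(a+b+m_0)+2+t_1+t_2=m+1+(t_1+t_2+1)$, and $t_1+t_2+1$ is precisely the block count of $M$. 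The base case is a pure chain, where \prf{prop:noI2} gives $m+1$ columns and $0$ blocks. (That the staircase pieces never collide, so $\ncols{M}$ really equals this count, is exactly the $\Leftarrow$ direction of \trf{thm:F-FT-char}.)

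With the formula in hand the upper bound is immediate: any $A$ is a column-subset of some such $M$, so $\ncols{A}\le\ncols{M}=m+1+t$, and since each block occupies at least two rows we have $t\le\lfloor m/2\rfloor$, whence $\forb(m,F^T)\le m+1+\lfloor m/2\rfloor=\lfloor\tfrac{3m}{2}\rfloor+1$. For the matching construction and the characterization I would run the equality analysis by maximizing $t$: equality forces $A=M$ to be a full block form with $t=\lfloor m/2\rfloor$ blocks, all as small as possible. Letting $\sum_i m_i$ denote the rows covered by blocks (so $m-\sum_i m_i\in\{0,1\}$ chain rows), the bounds $2t\le\sum_i m_i\le m$ pin $\sum_i m_i$ down. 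For $m$ even this forces $\sum_i m_i=m$ with every $m_i=2$, i.e.\ all $B_i=I_2$ (using $I_2=I_2^c$ up to column permutation). For $m$ odd it forces either $\sum_i m_i=m-1$ with all $m_i=2$ and one leftover chain row (the empty block), or $\sum_i m_i=m$ with exactly one $m_i=3$ and the rest $2$ (an $I_3$ or $I_3^c$ block, which are genuinely distinct here) --- precisely the two stated configurations.

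Finally I would reconcile $F$ with $F^T$. By \lrf{lem:F-block} a matrix in $\Av(m,F)$ is a column-subset of a block form whose blocks may carry repeated rows, and repeating a row inside a block adds a row but no column; re-running the induction then yields $\ncols{M}=m+1+t-r$, where $r$ counts repeated rows, which only lowers the bound. Hence $\forb(m,F)=\forb(m,F^T)=\lfloor\tfrac{3m}{2}\rfloor+1$, and any extremal matrix for either has $r=0$ and the block structure above. The main obstacle I anticipate is not the bound but the equality bookkeeping: cleanly ruling out every near-optimal block multiset (for instance two size-$3$ blocks, or a size-$2$ block together with leftover uncovered rows) so that exactly the listed even/odd configurations survive, while correctly tracking when distinct blocks are identified up to row/column permutation ($I_2\equiv I_2^c$) and when they are not ($I_3\not\equiv I_3^c$).
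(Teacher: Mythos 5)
Your proposal is correct and follows essentially the same route as the paper: both reduce the problem to the block-form characterization of \trf{thm:F-FT-char}, observe that a full block form with $t$ genuine nonempty blocks has exactly $m+1+t$ columns (with repeated rows only lowering this count), and then maximize via $t\le\lfloor m/2\rfloor$ with the identical even/odd equality analysis yielding all-$I_2$ blocks, or one empty block, or one $I_3$/$I_3^c$ block. The only cosmetic difference is that you derive the counting formula by induction on the decomposition of \lrf{lem:FT-block}, whereas the paper counts rows and columns of the block form directly.
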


\begin{proof}
    An extremal configuration $A \in \mathrm{ext}(m,F)$ or $\mathrm{ext}(m,F^T)$ is in the form \rf{blockform} with each $B_i=I_{\ell}$ or $I_{\ell}^c$. 
    If $A$ is a proper subset of the columns at the right, then simply including all the columns would give a larger configuration  so $A$ equals this entire matrix. 
    
    We know count the number of rows and columns of $A$. Let $n_i$ be the number of columns of $B_i$. Let $j$ be the number of empty $B_i$. Then the total number of rows is at least $j + \sum_{i = 1}^k n_i$ (empty $B_i$ still leave a row in their place, and other $B_i$ have at least as many rows as columns). And the total number of columns is exactly $k+j+1 + \sum_{i = 1}^k n_i$. 
    
    To maximize the number of columns given a fixed number of rows $m$, we can equivalently maximize the difference between the number of columns and the number of rows. This difference is at most $k + 1$, with equality if no $B_i$ contains repeated rows.  Each nonempty $B_i$ has $n_i \geq 2$, so  $k\le\lfloor \frac{m}{2} \rfloor$ of them. Therefore $\mathrm{forb}(m, F), \mathrm{forb}(m, F^T) \leq k + 1 + m = \lfloor \frac{3m}{2} \rfloor + 1$.

    If $m$ is even, then the only way to get equality is to have $n_i=2$ for all $i$. This means we have a unique extremal matrix.
    
    On the other hand, if $m$ is odd, then we have to have $k=\lfloor\frac{m}{2}\rfloor$ nonempty blocks. This leaves us with two possibilities. One of them is that we have $k$ blocks of size $2$ each and there is an empty block. The other one is that we have $k-1$ blocks of size $2$ and one block of size $3$, this latter one is either $I_3$ or $I_3^c$. In both cases the position of the block which is not an $I_2$ distinguishes the extremal matrices.
\end{proof}
\section{Stability within a linear amount of the bound}\label{sec:F2} 
In this section we prove Theorems~\ref{strong} and \ref{thm:F1221stab}

 \subsection{$F_2$ with a quadratic bound}
\begin{thm}\cite{small_II}
  $\forb(m,F_2)=\lfloor m^2/4\rfloor+m+1$ \qed
\label{F2bound}\end{thm}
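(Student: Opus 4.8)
The plan is to prove matching lower and upper bounds of $\lfloor m^2/4\rfloor+m+1$, with the lower bound coming from the product of two maximal chains and the upper bound from a refinement of standard induction.

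\emph{Lower bound.} I would take $k=\lfloor m/2\rfloor$ and set $A=[\0_k\,\,T_k]\times[\0_{m-k}\,\,T_{m-k}]$. The factor $[\0_k\,\,T_k]$ realizes the chain $\emptyset\subset\{1\}\subset\cdots\subset\{1,\dots,k\}$ on its $k$ rows, so it has $k+1$ columns and lies in $\Av(k,I_2)$; likewise the second factor has $m-k+1$ columns. Hence $\ncols{A}=(k+1)(m-k+1)$, which for $k=\lfloor m/2\rfloor$ equals $\lfloor m^2/4\rfloor+m+1$ (checking the two parities of $m$ separately). To verify $F_2\not\prec A$ I would use the observation that \emph{every pair of rows of $F_2$ already contains $I_2$}: each of the three row-pairs of $F_2$ has both a $\onezero$ and a $\zeroone$ among its columns. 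The rows of $A$ split into two blocks, and on any two rows inside one block the induced configuration is a subconfiguration of a chain and so avoids $I_2$. Since $F_2$ has three rows but $A$ has only two blocks, any embedded copy of $F_2$ would place two of its rows in a single block, forcing an $I_2$ there, a contradiction.

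\emph{Upper bound, skeleton.} For the matching upper bound I would run standard induction on a row $r$, writing $\ncols{A}=\ncols{[B_rC_rD_r]}+\ncols{C_r}$ with $[B_rC_rD_r]\in\Av(m-1,F_2)$. The algebraic fact driving the induction is that $F_2=[0\,1]\times I_2$, since the four columns of $F_2$ are exactly the columns of $[0\,1]\times I_2$; moreover any $F'$ with $F_2\prec[0\,1]\times F'$ must contain $I_2$ or $F_2$ (if the embedding uses the distinguished top row, the non-special rows must match the $I_2$ pattern in pairs and force $I_2\prec F'$; otherwise the doubled rows collapse and force $F_2\prec F'$). Hence the unique minimal inductive child is $I_2$, so $C_r\in\Av(m-1,I_2)$ is a chain and $\ncols{C_r}\le m$.

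\emph{The main obstacle.} The naive recursion $\forb(m,F_2)\le\forb(m-1,F_2)+m$ only yields $\approx m^2/2$, off by exactly the crucial factor of two; indeed in the extremal product of two chains the \emph{average} value of $\ncols{C_r}$ is about $m/2$, not $m$, because a full chain in $C_r$ severely restricts $[B_rC_rD_r]$, a trade-off invisible to one-step induction. The heart of the argument (carried out in \cite{small_II}) is to recover this factor of two, and the plan I would follow is a dichotomy: either some row $r$ has $\ncols{C_r}\le\lceil m/2\rceil$, so that deleting it and inducting preserves the target bound, or every $C_r$ is a near-maximal chain, in which case the total $\sum_r\ncols{C_r}$ (the number of hypercube edges spanned by the columns of $A$) is pushed close to $m^2$ and the resulting rigid chain structure in all directions forces a copy of $F_2$ unless $\ncols{A}$ is already small. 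Making this dichotomy quantitative, and in particular pinning the additive term to exactly $m+1$ together with the floor (which tracks the parity of $m$ in the same way $k=\lfloor m/2\rfloor$ does in the construction), is the technical crux I expect to be hardest.
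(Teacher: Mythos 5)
Your lower bound is complete and correct, and it is exactly the paper's extremal construction: $[\0_k\,T_k]\times[\0_{m-k}\,T_{m-k}]$ with $k=\lfloor m/2\rfloor$, verified by the pigeonhole observation that every pair of rows of $F_2$ contains $I_2$ while any two rows within one chain block avoid $I_2$. Your identification of $I_2$ as the unique inductive child, via $F_2=[0\,1]\times I_2$, is also correct and is precisely the launching point of the paper's own argument (the paper states \trf{F2bound} with a citation to \cite{small_II}, but the proof of \trf{extF2} is explicitly offered as a new proof of the bound, en route to $|\ext(m,F_2)|=1$). The first horn of your dichotomy is fine: since $\lfloor m^2/4\rfloor-\lfloor (m-1)^2/4\rfloor=\lfloor m/2\rfloor$, the one-step induction closes whenever some row has $\ncols{C_r}\le\lfloor m/2\rfloor+1$, and your threshold $\lceil m/2\rceil$ sits below this.

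The genuine gap is the second horn, which you describe only as a plan (``the resulting rigid chain structure in all directions forces a copy of $F_2$ unless $\ncols{A}$ is already small'') and explicitly defer to \cite{small_II}; as written, your proposal proves only the lower bound. The missing idea, which is the heart of the paper's proof of \trf{extF2}, is the following. When every $\ncols{C_r}$ exceeds the threshold, choose for each row $r$ a set $R_r$ of rows on which the chain $C_r$ remains simple, so $C_r|_{R_r}=[\0_k\,T_k]$ and $|R_r|=\ncols{C_r}-1\gtrsim m/2$. Since $C_r$ sits under both a $0$ and a $1$ in row $r$, any pair $i,j\in R_r$ already yields six distinct columns of $A$ restricted to rows $\{r,i,j\}$; as $\forb(3,F_2)=6$, a seventh pattern is forbidden, which rigidifies $A|_{R_r}$ to the chain order (no column of $A$ may carry $\zeroone$ within $R_r$). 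From this rigidity one deduces that $R_r\cap R_{r'}=\emptyset$ for any $r'\in R_r$, because columns differing only in row $r'$ must be constant on $R_r\setminus r'$. Since each $|R_r|$ exceeds roughly $m/2$, there can be only two distinct sets, they partition $[m]$, and every column of $A$ lies in $[\0_k\,T_k]\times[\0_{m-k}\,T_{m-k}]$, whence $\ncols{A}\le(k+1)(m-k+1)\le\lfloor m^2/4\rfloor+m+1$, closing the induction and simultaneously giving uniqueness of the extremal matrix. Your heuristic that $\sum_r\ncols{C_r}$ counts hypercube edges is suggestive but is never converted into a contradiction; without the $R_r$ argument (or the shifting-plus-Tur\'an proof of \cite{survey}) the upper bound remains unproved in your proposal.
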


The proof given in \cite{survey} uses shifting and Tur\'an's triangle bound. We provide a new approach while establishing $|\ext(m,F_2)|=1$.

\begin{thm}
$\ext(m,F_2)=[\0_{\lceil m/2\rceil}|\,T_{\lceil m/2\rceil}]\times     [\0_{\lfloor m/2\rfloor}\,T_{\lfloor m/2\rfloor}]$ \label{extF2}\end{thm}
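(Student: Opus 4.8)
The plan is to establish two facts: that $M:=[\0_{\lceil m/2\rceil}\,|\,T_{\lceil m/2\rceil}]\times[\0_{\lfloor m/2\rfloor}\,|\,T_{\lfloor m/2\rfloor}]$ belongs to $\ext(m,F_2)$, and that it is the only member of $\ext(m,F_2)$ up to row and column permutations.

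To see that $M$ is extremal, first count its columns. Each factor $[\0_k\,|\,T_k]$ is a chain of $k+1$ nested columns, so $\ncols{M}=(\lceil m/2\rceil+1)(\lfloor m/2\rfloor+1)=\lfloor m^2/4\rfloor+m+1$, which equals $\forb(m,F_2)$ by \trf{F2bound}. It then suffices to check that $F_2\not\prec M$. The key observation is that each of the three pairs of rows of $F_2$ already contains an $I_2$, so any three rows of a matrix that realize $F_2$ must be pairwise \emph{crossing}, meaning the restriction to each pair of them contains $I_2$. In $M$, however, two rows cross exactly when they lie in different factors of the product: two rows inside one chain are nested and never display $I_2$, whereas rows from different chains are independent and display all of $K_2$. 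Hence the crossing relation on the rows of $M$ is bipartite and contains no triangle, so no three rows of $M$ are pairwise crossing and $F_2\not\prec M$. Thus $M\in\Av(m,F_2)$ with $\forb(m,F_2)$ columns, i.e. $M\in\ext(m,F_2)$.

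For uniqueness, let $A\in\ext(m,F_2)$, so $\ncols{A}=\forb(m,F_2)$. For $m$ large this meets the hypothesis $\ncols{A}\ge\forb(m,F_2)-m/6+c$ of \trf{strong}, which therefore gives $A\prec[\0_k\,|\,T_k]\times[\0_{m-k}\,|\,T_{m-k}]$ for some $k$. Since both sides are $m$-rowed simple matrices, $A\prec$ the right side forces $\ncols{A}\le(k+1)(m-k+1)$. But $(k+1)(m-k+1)\le\lfloor m^2/4\rfloor+m+1=\ncols{A}$, with equality only for balanced $k\in\{\lfloor m/2\rfloor,\lceil m/2\rceil\}$. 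Both inequalities must therefore be equalities: $k$ is balanced, so the product is $M$, and $A$ uses every one of its columns, whence $A=M$ up to permutation. The finitely many small $m$ falling outside the stability window of \trf{strong} are checked directly.

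The genuine difficulty sits inside \trf{strong}, which I am taking as given. A self-contained induction is tempting but is exactly where the quadratic bound bites: standard induction on a row $r$ gives $\ncols{A}=\ncols{[B_rC_rD_r]}+\ncols{C_r}$, and one checks that the unique inductive child of $F_2$ is $I_2$ (since $[0\,1]\times I_2$ is a column permutation of $F_2$, while the other candidates contain $K_2\succ I_2$), so $C_r\in\Av(m-1,I_2)$ is a chain by \prf{prop:noI2}. The chain bound only yields $\ncols{C_r}\le m$, whereas tightness demands $\ncols{C_r}\le\lfloor m/2\rfloor+1$; closing this gap and tracking the case of equality is the main obstacle to a direct proof, and is precisely the extremal analysis underlying \trf{strong}. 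The counting and the crossing argument above are routine by comparison.
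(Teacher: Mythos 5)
Your verification half is correct and actually supplies detail the paper omits: the paper merely asserts $[\0_{\lceil m/2\rceil}\,T_{\lceil m/2\rceil}]\times[\0_{\lfloor m/2\rfloor}\,T_{\lfloor m/2\rfloor}]\in\ext(m,F_2)$, whereas your observation that every pair of rows of $F_2$ contains $I_2$, so that any three rows realizing $F_2$ must be pairwise crossing, while the crossing relation on the rows of the product is bipartite (rows within one chain show only $\left[\begin{smallmatrix}0\\0\end{smallmatrix}\right],\left[\begin{smallmatrix}1\\0\end{smallmatrix}\right],\left[\begin{smallmatrix}1\\1\end{smallmatrix}\right]$) and hence triangle-free, is a clean proof of $F_2\not\prec M$. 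Your uniqueness half, however, inverts the paper's logical architecture. The paper proves \trf{extF2} first and directly: standard induction \rf{standard}, the fact that $I_2$ is the only inductive child of $F_2$ (so $C_r$ is a chain by \prf{prop:noI2}), the lower bound $\ncols{C_r}\ge\lfloor m/2\rfloor$ from the column count, the row sets $R_r$ with $A|_{R_r}$ carrying $[\0_k\,T_k]$ and no stray $\left[\begin{smallmatrix}0\\1\end{smallmatrix}\right]$, the disjointness $R_r\cap R_{r'}=\emptyset$ for $r'\in R_r$, and the conclusion that exactly two such sets partition $[m]$; \trf{strong} is then obtained afterwards as a ``slight variation'' of this same argument. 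You instead derive \trf{extF2} from \trf{strong} plus the count $(k+1)(m-k+1)\le\lfloor m^2/4\rfloor+m+1$ with equality only at balanced $k$ --- which, interestingly, is exactly how the paper treats the companion configuration $F(1,2,2,1)$, where the extremal characterization is deduced as a corollary of the stability theorem \trf{thm:F1221stab} by the same product-size count. Since the paper's proof of \trf{strong} never invokes the statement of \trf{extF2}, your reduction is not circular, and the counting step is correct.

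The genuine gap is that by taking \trf{strong} as given you have deferred, not discharged, the entire inductive engine: in the paper that engine \emph{is} the proof of \trf{extF2}, with \trf{strong} written only as a delta against it. So as a standalone argument your proposal establishes the easy inclusion and the final counting, and you concede as much; your closing sketch correctly identifies where a direct induction gets stuck (the chain bound gives only $\ncols{C_r}\le m$, and the missing ingredients are precisely the paper's sets $R_r$ and their pairwise disjointness, which force the two-chain partition), but it does not carry that analysis out. Two smaller loose ends: the constant $c$ in \trf{strong} is left unspecified by the paper, so ``the finitely many small $m$ outside the stability window'' is not a well-defined set, and those cases are asserted to be ``checked directly'' without any check being exhibited; this matters because the uniqueness claim of \trf{extF2} is for all $m$, not just large $m$.
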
 
\proof Apply standard induction as in \rf{standard}. Note that $$[\0_{\lceil m/2\rceil}|\,T_{\lceil m/2\rceil}]\times     [\0_{\lfloor m/2\rfloor}\,T_{\lfloor m/2\rfloor}]\in\ext(m,F_2).$$ We wish to show uniqueness. Note that there is no row of 1's or row of 0's in $[\0_{a}\,T_{a}]$. For any row $r$ apply standard induction  to obtain $C_r$. 
$$A=\left[\begin{array}{cccc} 00\cdots 0&00\cdots 0&11\cdots 1&11\cdots1 \\ B_r&C_r&C_r&D_r \\\end{array} \right]$$ Compute $\lfloor m^2/4\rfloor+m+1-\left(\lfloor (m-1)m^2/4\rfloor+(m-1)+1\right)=\lfloor m/2\rfloor$. We use induction on $m$ to assert $\ncols{[B_rC_rD_r]}\le \lfloor (m-1)m^2/4\rfloor+(m-1)+1$ and thus if $\ncols{C_r}< \lfloor m/2\rfloor$, then $\ncols{A}<\lfloor m^2/4\rfloor+m+1$, a contradiction. We deduce that for $r\in[m]$, $\ncols{C_r}\ge \lfloor m/2\rfloor$.

Note that $F_2=[0\,1]\times I_2$ and hence the only inductive child of $F_2$ is $I_2$. Thus $C_r\in\Av(m,I_2)$ so that columns of $C_r$ are from $[\0_{m-1}\,T_{m-1}]$. Now find a set of rows $R_r\subset[m]\backslash r$ so that $C_r|_{R_r}$ is simple and $\ncols{C_r}=\ncols{C_r|_{R_r}}$. This is achieved by deleting rows from $C_r$ that don't affect simplicity e.g. delete any row of $0$'s and any row of $1$'s as well as any repeated row. Assume $|R_r|=k$. Then we can order the rows of $R_r$ so that $C_r|_{R_r}=[\0_{k}\,\,T_{k}]$.

In order that $\ncols{A}\ge \lfloor m^2/4\rfloor+m+1$, we deduce by induction that $\ncols{C_r}=|R_r|+1\ge \lfloor m/2\rfloor$. Thus in the rows of $R_r$, for any $S=\{i,j\}\in\binom{R_r}{2}$ with $i<j$, then $[\0_2\,T_2]$ is a submatrix of $A|_{\{i,j\}}$, giving us 6 columns in $ A|_{\{r,i,j\}}$. Then $\left[\linelessfrac{0}{1}\right]$ is not a submatrix of $ A|_{R_r}$ else we have 7 columns in $ A|_{\{r,i,j\}}$ yielding $F_2\prec A$. Note that what happened in $C_r$ has propogated to $B_rC_rD_r$.

Now choose $r'\in R_r$ and apply standard induction to obtain $C_{r'}$. We seek pairs of columns that differ in the single row $r'$. But in the columns of $ A|_{R_r}$, the only pairs of such columns have either rows of 0's or rows of 1's in the rows $R_r\backslash r'$. Thus $R_r\cap R_{r'}=\emptyset$.

Given that each $R_r$ is at least $m/2$, we deduce that there are only two versions, say from rows $r,r'$ with $R_r\cup R_{r'}=[m]$ while $R_r\cap R_{r'}=\emptyset$. We obtain that the columns of $A$ are contained in the cross product and if $A$ is extremal we have $A= [\0_{\lceil m/2\rceil},T_{\lceil m/2\rceil}]\times
    [\0_{\lfloor m/2\rfloor}\,T_{\lfloor m/2\rfloor}]$. \qed
\vskip 10pt
We can adapt this argument to get a strong stability result.

\noindent {\bf Proof of \trf{strong}}: The proof is a slight variation on the  proof above for \trf{extF2}. As before, standard induction \rf{standard} is used. $$m^2/4+m+1-m/6+c-(m-1)^2/4-(m-1)-1> 
m/3+c.$$ Note that means that for all $r\in [m]$, $\ncols{C_r}> m/3$ with $c=1$.
 As above, we define $R_r$ with $|R_r|=\ncols{C_r}$ and $A|_{R_r}\prec [\0_{|R_r|}\,|\,T_{|R_r|}]$. As before we deduce that for $r'\in R_r$ that $R_r\cap R_{r'}=\emptyset$. Given that $|R_r|>m/3$ for each $r\in[m]$, we deduce that there are two rows $r,r'$ with $R_r\cup R_{r'}=[m]$ yielding $A\prec [\0_k\,|\,T_k]\times [\0_{\ell}\,|\,T_{\ell}]$ where $k=|R_r|$ and $\ell=|R_{r'}|=m-r$. Note that $k-\ell$ must be `small' in order to have $\ncols{A}\ge m^2+m+1-m/6+c$ columns. A bound of $k-\ell\le{\sqrt{m/6}}$ follows. \qed
\subsection{$F=F(1,2,2,1)$}\label{sec:F1221}
We prove Theorem~\ref{thm:F1221stab} in this section. The following well known fact is used in the proof.
\begin{obs}\label{obs:trianglefree-bip}
A triangle-free graph $G$ with $m$ vertices and at least $\lfloor \frac{m^2}{4} \rfloor - (\lfloor \frac{m}{2} \rfloor - 2)$ edges is bipartite.
\end{obs}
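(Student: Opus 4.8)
The plan is to prove the contrapositive: I will show that if $G$ is triangle-free and \emph{not} bipartite, then $e(G) \le \lfloor m^2/4 \rfloor - \lfloor m/2 \rfloor + 1$, which is exactly one less than the edge count hypothesized in the observation. A convenient reformulation, valid in both parities of $m$, is that this target equals $\lfloor (m-1)^2/4 \rfloor + 1$; this is the quantity I will aim to match. So the statement becomes a stability version of Mantel's theorem: a triangle-free non-bipartite graph on $m$ vertices has at most $\lfloor (m-1)^2/4\rfloor + 1$ edges.

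The structural input I would use is a shortest odd cycle. Since $G$ is non-bipartite it contains an odd cycle; let $C = c_1 c_2 \cdots c_{2k+1}$ be one of minimum length. Triangle-freeness forces $2k+1 \ge 5$, hence $k \ge 2$, and minimality forces $C$ to be chordless (a chord would split $C$ into a shorter even cycle together with a shorter odd cycle, contradicting minimality). Write $W = V(G) \setminus V(C)$ with $|W| = m - (2k+1)$, and split $e(G)$ into three parts. First, because $C$ is chordless, $e(G[V(C)]) = 2k+1$ exactly. Second, $G[W]$ is triangle-free, so by Mantel's theorem $e(G[W]) \le \lfloor |W|^2/4 \rfloor$. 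Third, for each $w \in W$ its neighbours on $C$ form an independent set of the cycle $C_{2k+1}$ (two adjacent neighbours would give a triangle with $w$), and $C_{2k+1}$ has independence number $k$, so each $w$ contributes at most $k$ edges to $C$; hence $e(V(C), W) \le k\,|W|$.

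Adding these three bounds and substituting $|W| = m - 2k - 1$ yields an explicit function $f(k)$ bounding $e(G)$. Treating $m=2t$ and $m=2t+1$ separately and simplifying the floors, I expect the bookkeeping to collapse, in \emph{both} parities, to the single clean identity
\[
f(k) = \Big(\lfloor m^2/4 \rfloor - \lfloor m/2 \rfloor + 1\Big) + 1 - (k-1)^2 .
\]
Since $k \ge 2$ we have $(k-1)^2 \ge 1$, so $f(k) \le \lfloor m^2/4\rfloor - \lfloor m/2\rfloor + 1$, with equality precisely when $k = 2$. This is the desired bound, and the equality case $k=2$ (a shortest odd cycle $C_5$) confirms the bound is tight, so the threshold in the observation cannot be lowered. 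I expect the only genuine care to lie in the three edge estimates—particularly the independence-number bound for the $C$-to-$W$ edges and the chordlessness of a shortest odd cycle—while the final step is a routine parity computation that, pleasantly, reduces to the identity above.
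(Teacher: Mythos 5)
Your proposal is correct, and the computation it defers to actually works out exactly as you predict. A point of comparison worth knowing: the paper offers \emph{no} proof of this observation --- it is stated as a ``well known fact'' (it is the classical sharp bound that a non-bipartite triangle-free graph on $m$ vertices has at most $\lfloor (m-1)^2/4\rfloor+1$ edges), so your argument fills a gap the paper leaves to the reader, and the shortest-odd-cycle decomposition you use is the standard route to it. I verified the three estimates: minimality does force $C$ chordless (a chord plus the even arc yields a shorter odd cycle), Mantel applies to $G[W]$, and the neighbours of any $w\in W$ on $C$ are indeed independent in $C_{2k+1}$, giving at most $\alpha(C_{2k+1})=k$ edges per vertex of $W$ --- this weaker bound suffices, though minimality of $C$ would even force at most $2$ neighbours. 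Two small refinements: the parity case analysis you anticipate is unnecessary, since with $w=|W|$ one has $m-1=2k+w$ and the floor splits cleanly, $\lfloor (2k+w)^2/4\rfloor = k^2+kw+\lfloor w^2/4\rfloor$, whence in one line $f(k)=(2k+1)+kw+\lfloor w^2/4\rfloor = \bigl(\lfloor (m-1)^2/4\rfloor+1\bigr)+1-(k-1)^2$, exactly your identity, valid for all $m$ at once; and for $m\le 4$ the statement is vacuous for the contrapositive (every non-bipartite graph on at most $4$ vertices contains a triangle), so the argument is complete as stated. Your observation that $k=2$ gives equality correctly shows the threshold in the paper's observation is best possible.
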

For a large enough simple matrix $A\in\Av(m,F(1,2,2,1))$, a directed graph on $m$ vertices is defined as follows:
\begin{defin}
Given a 0-1 matrix $A$ with $m$ rows and $n$ columns ($n > m+1$), the \emph{procedure graph}  is a graph $G = (V, E)$, where each $v \in V$ represents a row in $A$ and the edge set $E$ is produced by a procedure defined as follows. 

Start with $E_0 = \emptyset$ and $A_0 = A$. Then, for each integer $i \geq 0$ such that $\ncols{A_i} > m+1$, we can find a $K_2$ in some rows $j,k$ of $A_i$ (by Theorem~\ref{thm:sauer}). Since $A_i$ (a submatrix of $A$) forbids $F$, one of $\begin{smallmatrix}
    0\\ 1
\end{smallmatrix}$ or $\begin{smallmatrix}
    1\\ 0
\end{smallmatrix}$ must occur exactly once in row $i, j$. Then, we remove the corresponding column from $A_i$ getting $A_{i+1}$, and add a directed edge $(j,k)$ or $(k, j)$ (choose the one that makes 0 go to 1) to $E_i$ obtaining $E_{i+1}$. The procedure terminates when $\ncols{A_i} \leq m+1$. As a result, the final edge set $E_{\ncols{A}-m-1}$ is the edge set $E$ for the procedure graph. 
\end{defin}
The edge set $E_{\ncols{A} - m - 1}$ are the \emph{forcing edges}. Their existence allows to use \emph{primary forcing}, that is for a forcing edge $(i,j)$ that occurs in column $\alpha$, we have entry $0$ ($1$) in row $i$ ($j$) in a column $\beta\not=\alpha$, then the $j$-entry ($i$-entry) of $\beta$ is also $0$ ($1$).   Given a partially-filled 0-1 matrix and a set of forcing edges, one can fill  several entries of the remaining matrix using primary forcing.

\begin{figure}[!htb]
    \centering
    \begin{minipage}{.5\textwidth}
        \centering
        \includegraphics[width=0.5\linewidth]{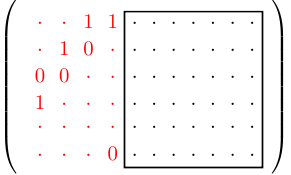}
        \caption{Before primary forcing}
        \label{fig:prob1_6_2}
    \end{minipage}%
    \begin{minipage}{0.5\textwidth}
        \centering
        \includegraphics[width=0.5\linewidth]{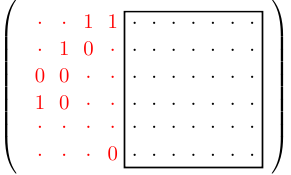}
        \caption{after one step of primary forcing}
        \label{fig:prob1_6_1}
    \end{minipage}
\end{figure}
\begin{remark}
  Given 0-1 matrix $A$, the procedure graph defined might not be unique, but all of them have the primary forcing property as shown above. 
\end{remark} 

\begin{prop}
    The underlying undirected graph of the procedure graph defined above is simple. 
\end{prop}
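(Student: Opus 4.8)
The plan is to show the underlying undirected graph has neither loops nor parallel edges. Loops are immediate: every edge is produced from a $K_2$ located in two rows $j,k$, and a $K_2$ requires two distinct rows, so $j\neq k$ at every step. The entire content is therefore to rule out two edges sharing the same unordered pair $\{j,k\}$.

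First I would record precisely what happens at a single step. When a $K_2$ is found on rows $j,k$ of $A_i$, all four patterns $\begin{smallmatrix}0\\0\end{smallmatrix}$, $\begin{smallmatrix}1\\0\end{smallmatrix}$, $\begin{smallmatrix}0\\1\end{smallmatrix}$, $\begin{smallmatrix}1\\1\end{smallmatrix}$ occur among the columns of $A_i$ restricted to rows $j,k$. Since $A_i\prec A$ avoids $F(1,2,2,1)$ and we already have (at least) one $\begin{smallmatrix}0\\0\end{smallmatrix}$ and one $\begin{smallmatrix}1\\1\end{smallmatrix}$ on these rows, we cannot simultaneously have two copies of $\begin{smallmatrix}1\\0\end{smallmatrix}$ and two copies of $\begin{smallmatrix}0\\1\end{smallmatrix}$; hence at least one of the two mixed patterns occurs exactly once on rows $j,k$ (it occurs at least once by virtue of the $K_2$). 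The procedure deletes that single column, which is exactly the column recording the new edge.

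The crux is a monotonicity argument. Deleting the unique occurrence of, say, $\begin{smallmatrix}1\\0\end{smallmatrix}$ from rows $j,k$ removes that pattern entirely from those two rows, so $A_{i+1}$ restricted to rows $j,k$ no longer exhibits all four patterns and in particular contains no $K_2$. Because every later matrix $A_{i'}$ with $i'>i$ is a column-subset of $A_{i+1}$, and the procedure only ever deletes columns and never adds them, the pattern $\begin{smallmatrix}1\\0\end{smallmatrix}$ can never reappear on rows $j,k$. Consequently no $K_2$ is ever again found on the pair $\{j,k\}$, so the procedure contributes at most one edge to each unordered pair of rows. Together with the absence of loops, this shows the underlying undirected graph is simple.

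I expect the main (and only mild) obstacle to be phrasing the monotonicity cleanly: the destruction of the $K_2$ on $\{j,k\}$ is permanent precisely because the process is purely subtractive on columns. I would also note that the argument is insensitive to which mixed pattern is chosen for deletion in the case where both $\begin{smallmatrix}1\\0\end{smallmatrix}$ and $\begin{smallmatrix}0\\1\end{smallmatrix}$ happen to be unique, since in either case the deleted pattern vanishes entirely from rows $j,k$ and the $K_2$ cannot return.
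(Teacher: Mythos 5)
Your proof is correct and takes essentially the same approach as the paper's: the deleted column is the unique occurrence of $\left[\begin{smallmatrix}0\\1\end{smallmatrix}\right]$ or $\left[\begin{smallmatrix}1\\0\end{smallmatrix}\right]$ on that pair of rows, and since the procedure only ever deletes columns, that pattern never reappears, so no $K_2$ (hence no second edge) can occur on the same pair. The paper packages this as a contradiction between steps $i_1 < i_2$ while you state the column-deletion monotonicity directly (and add the trivial no-loops remark), but the argument is the same.
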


\begin{proof}
    For the sake of contradiction assume that the procedure graph is not simple, that is  there exists $i_1 < i_2$ such that $\{j_1, k_1\} = \{j_2, k_2\}$. 

Note that in $i_1$\textsuperscript{st} step, we removed the only $\begin{smallmatrix}
    0\\ 1
\end{smallmatrix}$ or $\begin{smallmatrix}
    1\\ 0
\end{smallmatrix}$ from the pair of rows $\{j_1, k_1\} = \{j_2, k_2\}$. Then, each $A_{\ell}$ misses one of  $\begin{smallmatrix}    0\\ 1
\end{smallmatrix}$ or $\begin{smallmatrix}
    1\\ 0
\end{smallmatrix}$ on that pair of rows for $\ell>i_1$, in particular there is no $K_2$ in $A_{i_2}$ on rows $\{j_1, k_1\} = \{j_2, k_2\}$, a contradiction. 
\end{proof}

\begin{lemm}\label{Lemma directed cycle free}
    A procedure graph is directed cycle free.
\end{lemm}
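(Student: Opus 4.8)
The plan is to reinterpret each forcing edge as an \emph{entrywise domination} between two rows that becomes permanent once the triggering column is deleted, and then to close up a hypothetical directed cycle using the edge created \emph{last}. Concretely, suppose an edge $(i,j)$ is created at some step on a triggering column $\alpha$; by the ``$0$ goes to $1$'' orientation, $\alpha$ has a $0$ in row $i$ and a $1$ in row $j$, and it is the unique column of $A_i$ with this pattern on the pair $\{i,j\}$. After $\alpha$ is deleted, primary forcing says that in every surviving column a $0$ in row $i$ forces a $0$ in row $j$; equivalently, writing ``row $i\ge$ row $j$'' to mean that the $i$-entry is at least the $j$-entry in each column, we have row $i\ge$ row $j$ throughout $A_{i+1}$. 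The crucial monotonicity is that the procedure only ever \emph{deletes} columns, so this domination, once valid in $A_{i+1}$, remains valid in every later matrix $A_{\ell}$ with $\ell > i$.

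Now I would argue by contradiction. Suppose the procedure graph contains a directed cycle $v_1\to v_2\to\cdots\to v_t\to v_1$; since the underlying graph is simple we have $t\ge 3$ (though the argument does not need this). Exactly one edge is added per step, so the cycle's edges are created at distinct steps and there is a unique edge created latest; relabel cyclically so that this edge is $v_t\to v_1$, created at step $s$ on a triggering column $\alpha$ of $A_s$ having a $0$ in row $v_t$ and a $1$ in row $v_1$. Every other edge $v_r\to v_{r+1}$ of the cycle was created strictly before step $s$, so by the monotonicity above its domination row $v_r\ge$ row $v_{r+1}$ already holds throughout $A_s$. Chaining these dominations along the path $v_1\to v_2\to\cdots\to v_t$ gives, entrywise in $A_s$,
\[
\text{row } v_1 \ge \text{row } v_2 \ge \cdots \ge \text{row } v_t,
\]
and in particular row $v_1\ge$ row $v_t$ in every column of $A_s$.

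This contradicts the data that produced the last edge. The edge $v_t\to v_1$ was added because a $K_2$ was found on rows $\{v_t,v_1\}$ of $A_s$, and such a $K_2$ contains a column whose entry is $1$ in row $v_t$ and $0$ in row $v_1$. But row $v_1\ge$ row $v_t$ entrywise in $A_s$ forbids precisely this pattern, a contradiction. Hence no directed cycle exists.

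The step I expect to be the main obstacle — really the only point needing care — is the bookkeeping that legitimizes the chaining: the domination attached to an edge is guaranteed only \emph{after} its triggering column has been removed, so it need not hold when that edge is created and may genuinely fail in matrices appearing earlier in the procedure. Selecting the cycle edge created last is exactly what circumvents this, since at step $s$ every competing triggering column has already been deleted, so all the other dominations are simultaneously in force in $A_s$ — the one matrix in which they are needed. The two underpinning facts, that the steps are distinct (so ``created last'' is well defined) and that deletions are permanent, are both immediate from the definition of the procedure.
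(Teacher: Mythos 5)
Your proof is correct and takes essentially the same approach as the paper: both single out the edge of the cycle created last, observe that each earlier edge's unique $\left[\begin{smallmatrix}0\\1\end{smallmatrix}\right]$ column has already been deleted (and deletions are permanent), and derive a contradiction from the full $K_2$ present on the last edge's pair of rows at that step. The paper merely phrases your chained entrywise domination row $v_1\ge\cdots\ge$ row $v_t$ concretely, by locating the first $0$-to-$1$ transition in the surviving $K_2$ column, which is the identical argument.
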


\begin{proof}
    Assume for the sake of contradiction that there is a directed cycle of vertices $v_1, v_2, ..., v_n, v_{n+1} = v_1$. Without loss of generality let $(v_n, v_1)$ be the last edge added to $G$ in the procedure. So at this stage in the procedure there is a full $K_2$ outside of the already-removed columns. And these already-removed columns contain the only $\begin{smallmatrix} 0 \\ 1 \end{smallmatrix}$s between $(v_i, v_{i+1})$, $i < n$:
    
    \begin{equation*}
    \left[
    \begin{array}{c c c c c | c c c c}
        0 & & & & &  0 & 1 & 1 & 0 \\
        1 & 0 & & & & & & & * \\
        & 1 & 0 & & & & & & * \\
        & & ... & ... & & & & &  * \\
        & & & 1 & 0 & & & &  * \\
        & & & & 1 & 0 & 0 & 1 & 1 \\
    \end{array}
    \right]
    \end{equation*}

    Let $k$ be the minimal integer so that the element in the $k$th row and last column shown above is $1$ ($k$ at most $n$, and strictly greater than $1$). Then the element above in the $k-1$st row is $0$. But then there is another $\begin{smallmatrix} 0 \\ 1 \end{smallmatrix}$ between $v_{k-1}$ and $ v_k$, $k - 1 < n$, a contradiction.
\end{proof}

\begin{defin}
    A \emph{single-edge transitive graph} is a directed cycle with exactly one edge inverted. 
\end{defin}

\begin{lemm} \label{Lemma transitivity edge free}
    A procedure graph is single edge transitivity free
\end{lemm}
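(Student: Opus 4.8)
I want to show that a procedure graph contains no single-edge transitive graph, that is, no directed cycle $v_1 \to v_2 \to \cdots \to v_n \to v_1$ in which exactly one of the forward edges has been reversed. The natural strategy is to mimic the proof of Lemma~\ref{Lemma directed cycle free}: extract the forcing information carried by each edge of the configuration, look at the columns already removed when the \emph{last} edge of the structure was added, and trace a single column through the cycle until primary forcing produces a second copy of some $\begin{smallmatrix}0\\1\end{smallmatrix}$ or $\begin{smallmatrix}1\\0\end{smallmatrix}$ on a pair of rows that is supposed to contain only one, contradicting the defining property of a forcing edge.

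\textbf{Key steps.}
First I would set up notation: say the single-edge transitive graph has cycle edges $(v_i,v_{i+1})$ for $i<n$ all oriented forward, together with one reversed edge, which after relabelling I can take to be $(v_1,v_n)$ (so the ``cycle'' is $v_1\to v_2\to\cdots\to v_n$ plus a chord $v_1 \to v_n$, or equivalently $v_n\to v_1$ inverted). I would then, as in Lemma~\ref{Lemma directed cycle free}, let the chronologically last-added edge of this structure be the distinguished one, so that when it is inserted there is still a full $K_2$ present on its two rows while every other edge $(v_i,v_{i+1})$ has already had its unique $\begin{smallmatrix}0\\1\end{smallmatrix}$ removed. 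Second, I would write down the column whose existence is guaranteed by that final $K_2$ and apply primary forcing along the path: starting from the forced entry, each forcing edge $(v_i,v_{i+1})$ propagates a $0$ (or a $1$) from row $v_i$ to row $v_{i+1}$, so the entries in the column are determined step by step along $v_1,\dots,v_n$. Third, I would use the single inverted edge to close the loop: because exactly one edge runs the ``wrong'' way, the forced values wrap around inconsistently, and I find an entry that is simultaneously constrained to be $0$ and $1$, or—more precisely, following the Lemma~\ref{Lemma directed cycle free} template—I locate a minimal index $k$ where the traced column first disagrees with the cycle orientation, producing a second forbidden pattern $\begin{smallmatrix}0\\1\end{smallmatrix}$ on the rows $\{v_{k-1},v_k\}$ and contradicting that the column removed at that edge's step was the unique such column.

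\textbf{Main obstacle.}
The delicate point is getting the bookkeeping of the inverted edge exactly right. In the purely directed-cycle case of Lemma~\ref{Lemma directed cycle free}, primary forcing pushes a single consistent value all the way around and the contradiction comes from the first ``$0$ above a $1$'' transition. With one edge inverted, the orientation reverses at a single vertex, so I must be careful about which of $\begin{smallmatrix}0\\1\end{smallmatrix}$ or $\begin{smallmatrix}1\\0\end{smallmatrix}$ is being forced and in which direction, and I must verify that the reversal does not accidentally let the forced chain terminate harmlessly. I expect to handle this by choosing the distinguished (last) edge to be an edge of the directed portion adjacent to the inverted edge, so that the path I trace starts right at the inversion and the single reversal is absorbed into the initial $K_2$; then the remainder of the argument is a verbatim copy of Lemma~\ref{Lemma directed cycle free}'s minimal-index contradiction. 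Making sure such a choice of ``last edge'' is compatible with the chronological ordering of the procedure—rather than assumed for free—is the step that needs the most care.
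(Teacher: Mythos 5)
Your overall strategy (trace a single column through the structure and extract a second $\left[\begin{smallmatrix}0\\1\end{smallmatrix}\right]$ via a minimal-index argument) is the right family of argument, but your setup has a genuine gap: you import the ``WLOG let the last-added edge be \dots'' step from Lemma~\ref{Lemma directed cycle free}, and that step is not available here. A directed cycle is rotationally symmetric, so relabelling lets the chronologically last edge sit at any position you like; the single-edge transitive graph is \emph{not} symmetric --- the inverted edge $(v_1,v_n)$ is distinguished --- so the last-added edge may be any of the $n$ edges, and you cannot ``choose'' it to be adjacent to the inversion (chronology is fixed by the procedure, as you yourself suspect in your final paragraph). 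Worse, when the last-added edge is a path edge $(v_j,v_{j+1})$, the argument does not merely need care, it fails outright: the still-present $K_2$ column with $1$ in row $v_j$ and $0$ in row $v_{j+1}$ is forced (by the uniqueness already spent on the earlier edges) to have $1$'s in rows $v_1,\dots,v_j$ and $0$'s in rows $v_{j+1},\dots,v_n$, and this is perfectly consistent with the chord, which only forbids a second column with $0$ in row $v_1$ and $1$ in row $v_n$. The forced chain terminates harmlessly, and no contradiction wraps around; the same happens for the $(0,0)$ and $(1,1)$ columns of that $K_2$.

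The paper's proof sidesteps all of this with a simpler observation you missed: unlike the cycle case, the chord $v_1\to v_n$ here is oriented the \emph{same} way as the path, so its own removed column already has the needed orientation --- $0$ in row $v_1$ and $1$ in row $v_n$ --- and no surviving $K_2$ column (and no choice of last edge) is required. Taking $k$ minimal with a $1$ in row $v_k$ of that column (so $2\le k\le n$), the entry in row $v_{k-1}$ is $0$ by minimality, producing a second $\left[\begin{smallmatrix}0\\1\end{smallmatrix}\right]$ on the rows of the path edge $(v_{k-1},v_k)$ with $k-1<n$, in a column distinct from that edge's own removed column --- contradicting the uniqueness of the $\left[\begin{smallmatrix}0\\1\end{smallmatrix}\right]$ attached to $(v_{k-1},v_k)$. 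In Lemma~\ref{Lemma directed cycle free} the last edge's removed column has the wrong orientation ($0$ at $v_n$, $1$ at $v_1$), which is exactly why the opposite column of a still-present $K_2$ was needed there; in the transitive case that detour is unnecessary, and your plan to replicate it verbatim is precisely what creates the chronology problem you could not resolve.
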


\begin{proof}
    Assume for the sake of contradiction that a procedure graph $G$ has a single edge transitivity. That is, there is a directed path of vertices $v_1, v_2, ..., v_n$ in $G$, $n > 2$, as well as a directed edge from $v_1$ to $v_n$. Then there are $n$ distinct columns containing the unique $\begin{smallmatrix} 0 \\ 1 \end{smallmatrix}$s between each of these pairs of rows $(v_i, v_{i+1}), i < n$, and $(v_1, v_n)$:

    \begin{equation*}
    \left[
    \begin{array}{c c c c c c}
        0 & & & & &  0 \\
        1 & 0 & & & & * \\
        & 1 & 0 & & & * \\
        & & ... & ... & & * \\
        & & & 1 & 0 &  * \\
        & & & & 1 & 1 \\
    \end{array}
    \right]
    \end{equation*}

    Let $k$ be the minimal integer so that the element in the $k$th row and last column shown above is $1$ ($k$ at most $n$, and strictly greater than $1$). Then the element directly above in the $k-1$st row is $0$. But then this is another $\begin{smallmatrix} 0 \\ 1 \end{smallmatrix}$ between $v_{k-1}$ and $v_k$, for $k - 1 < n$, a contradiction.
\end{proof}

\begin{cor}
    A procedure graph $G$ has no undirected triangles.
\end{cor}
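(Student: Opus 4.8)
The plan is to deduce the corollary directly from the two preceding lemmas by a short orientation analysis. Suppose for contradiction that three vertices $u,v,w$ of the procedure graph $G$ form an undirected triangle. Since the underlying undirected graph of the procedure graph is simple, between each of the three pairs $\{u,v\}$, $\{v,w\}$, $\{u,w\}$ there is exactly one directed edge, so the subgraph induced on $\{u,v,w\}$ is a tournament on three vertices.

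The key observation is that a tournament on three vertices is, up to relabeling, one of only two types: a directed $3$-cycle, or a transitive triangle (one source, one sink, and one intermediate vertex). I would verify this exhaustiveness by the usual out-degree count: the three out-degrees sum to $3$, and either every vertex has out-degree $1$ (forcing the cyclic case) or some vertex has out-degree $2$ (which pins down the transitive case). These are the only possibilities, so every undirected triangle falls into one of the two cases.

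In the cyclic case the three edges form a directed cycle $u \to v \to w \to u$, contradicting \lrf{Lemma directed cycle free}. In the transitive case, writing the source as $v_1$, the intermediate vertex as $v_2$ and the sink as $v_3$, we obtain a directed path $v_1 \to v_2 \to v_3$ together with the direct edge $v_1 \to v_3$; this is exactly a single-edge transitive subgraph (a directed cycle on $v_1,v_2,v_3$ with the edge $v_3 \to v_1$ inverted), contradicting \lrf{Lemma transitivity edge free}. Either way we reach a contradiction, so $G$ has no undirected triangle.

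I do not expect a genuine obstacle here, as the statement is an immediate corollary of the two lemmas. The only point requiring care is making the orientation case analysis exhaustive, which is handled once we use simplicity of the underlying graph to conclude that exactly one directed edge joins each pair and hence that the three vertices truly induce a tournament; after that the two lemmas cover both orientation types.
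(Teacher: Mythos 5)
Your proof is correct and follows essentially the same route as the paper's: reduce the undirected triangle to its two possible orientations (directed $3$-cycle or transitive triangle) and rule each out by \lrf{Lemma directed cycle free} and \lrf{Lemma transitivity edge free} respectively. The only difference is that you justify the exhaustiveness of the case split explicitly (via simplicity of the underlying graph and the out-degree count for tournaments on three vertices), a point the paper asserts without comment.
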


\begin{proof}
    An undirected triangle can take two forms as a  directed 3-cycle, or a 2-path with a single transitive edge. By Lemma~\ref{Lemma directed cycle free} the first is not possible in $G$, and by Lemma~\ref{Lemma transitivity edge free} the second is not possible in $G$. So $G$ has no undirected triangles.
\end{proof}

\begin{defin}
    A large procedure graph is a procedure graph $G$ of some $A$ where $\ncols{A} \geq \lfloor \frac{m^2}{4} \rfloor + m + 1 - (\lfloor \frac{m}{2} \rfloor - 2)$
\end{defin}
Observation~\ref{obs:trianglefree-bip} gives

\begin{lemm} \label{undirected bipartite graph}
    A large procedure graph $G$ is bipartite in an undirected sense.
\end{lemm}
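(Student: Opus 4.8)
The plan is to combine the triangle-free property already established in the Corollary with the edge count that is forced by the definition of a large procedure graph, and then to invoke Observation~\ref{obs:trianglefree-bip} directly. The essential point is that the number of forcing edges in $G$ is pinned down exactly by the construction, and this number lines up precisely with the threshold in the observation.

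First I would count the edges of $G$. By the definition of the procedure graph, exactly one edge is added each time a column is removed, and the procedure terminates precisely when $\ncols{A_i} \leq m+1$; the final edge set is $E_{\ncols{A}-m-1}$. Hence $G$ has exactly $\ncols{A} - (m+1)$ edges. Since $G$ is a large procedure graph, $\ncols{A} \geq \lfloor \frac{m^2}{4} \rfloor + m + 1 - (\lfloor \frac{m}{2} \rfloor - 2)$, so the number of edges satisfies
\[
\ncols{A} - (m+1) \geq \left\lfloor \frac{m^2}{4} \right\rfloor - \left( \left\lfloor \frac{m}{2} \right\rfloor - 2 \right).
\]

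Next I would recall that by the Corollary the underlying undirected graph of $G$ has no triangles. Thus $G$, viewed undirected, is a triangle-free graph on $m$ vertices whose edge count meets exactly the threshold $\lfloor \frac{m^2}{4} \rfloor - (\lfloor \frac{m}{2} \rfloor - 2)$ appearing in Observation~\ref{obs:trianglefree-bip}. Applying that observation immediately yields that $G$ is bipartite, which is precisely the claim.

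I do not expect a genuine obstacle here, since all the structural work has been carried out in the preceding lemmas; the only thing to verify with care is the exact edge count and that the large-graph hypothesis matches the threshold in Observation~\ref{obs:trianglefree-bip}, which it does by design. The one subtlety worth a sentence is confirming that the number of removed columns really equals $\ncols{A}-(m+1)$ regardless of which $K_2$ is chosen at each step (the procedure graph is non-unique, per the Remark); but since each step removes exactly one column and the stopping condition depends only on the column count, the total number of edges is the same for every valid run of the procedure.
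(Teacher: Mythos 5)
Your proof is correct and is exactly the argument the paper intends: the paper's one-line proof (``Observation~\ref{obs:trianglefree-bip} gives'') is precisely your combination of the edge count $|E|=\ncols{A}-(m+1)$, the triangle-freeness corollary, and the observation's threshold. The only detail you use implicitly is that the underlying undirected graph is simple (so the $\ncols{A}-m-1$ edges are distinct), which the paper's earlier proposition supplies.
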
    

\begin{defin}
    The graph 
\begin{tikzpicture}[scale = 0.4]
           \node[circle,fill=black,inner sep=0pt,minimum size=3pt] (a) at (0,2){};
           \node[circle, fill=black, inner sep=0pt,minimum size=3pt] (b) at (2,2){};
           \node[circle, fill=black, inner sep=0pt,minimum size=3pt] (c) at (0,0){};
           \node[circle, fill=black, inner sep=0pt,minimum size=3pt] (d) at (2,0){};
           \draw[->] (a) -- (c);
           \draw[<-] (a) -- (d);
           \draw[<-] (b) -- (d);
           \draw[->] (b) -- (c);
       \end{tikzpicture}
        is called a diamond. The graph \begin{tikzpicture}[scale = 0.4]
           \node[circle,fill=black,inner sep=0pt,minimum size=3pt] (a) at (0,2){};
           \node[circle, fill=black, inner sep=0pt,minimum size=3pt] (b) at (2,2){};
           \node[circle, fill=black, inner sep=0pt,minimum size=3pt] (c) at (0,0){};
           \node[circle, fill=black, inner sep=0pt,minimum size=3pt] (d) at (2,0){};
           \draw[->] (a) -- (c);
           \draw[->] (d) -- (a);
           \draw[->] (b) -- (d);
           \draw[->] (b) -- (c);
       \end{tikzpicture} is a 4-cycle with a single transitive edge. 
\end{defin}

\begin{lemm} \label{Lemma diamond free}
    A large procedure graph is diamond-free. 
\end{lemm}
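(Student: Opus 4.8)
The plan is to force a copy of $F(1,2,2,1)$ onto the two ``middle'' rows of the diamond, namely the rows $a$ and $b$ of in- and out-degree one, contradicting $A\in\Av(m,F(1,2,2,1))$. Recall the meaning of a forcing edge $i\to j$: apart from its single witness column, every column it controls satisfies $\mathrm{NOT}(i{=}0,j{=}1)$, i.e.\ row $i$ dominates row $j$, while the witness alone reads $\binom{0}{1}$ on rows $i,j$. Since reading the six columns $\binom{0}{0},\binom{1}{0},\binom{1}{0},\binom{0}{1},\binom{0}{1},\binom{1}{1}$ on rows $a,b$ is precisely $F(1,2,2,1)$, it suffices to exhibit among the columns of $A$ one column with $(a,b)=\binom{0}{0}$, two with $\binom{1}{0}$, two with $\binom{0}{1}$, and one with $\binom{1}{1}$.

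Four of these come from the witness columns $\alpha_{da},\alpha_{db},\alpha_{ac},\alpha_{bc}$, one per diamond edge; these are distinct columns of $A$ because they are deleted at four distinct steps of the procedure. I would read off their $(a,b)$-entries by primary forcing along the incident edges: $\alpha_{da}$ has $a{=}1,d{=}0$ by definition and edge $d\to b$ forces $b{=}0$ from $d{=}0$, giving $\binom{1}{0}$; $\alpha_{bc}$ has $b{=}0,c{=}1$ and edge $a\to c$ forces $a{=}1$ from $c{=}1$, again $\binom{1}{0}$; dually $\alpha_{db}$ and $\alpha_{ac}$ each give $\binom{0}{1}$. The last two columns I would extract from the $K_2$'s that the procedure produces at the moments the edges $a\to c$ and $d\to b$ are created. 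The $K_2$ on rows $(a,c)$ contains a column with $(a,c)=\binom{1}{1}$, and then edge $b\to c$ forces $b{=}1$ from $c{=}1$, giving $(a,b)=\binom{1}{1}$; symmetrically the $K_2$ on rows $(d,b)$ contains a column with $(d,b)=\binom{0}{0}$, and edge $d\to a$ forces $a{=}0$ from $d{=}0$, giving $(a,b)=\binom{0}{0}$. The $(a,b)$-patterns themselves show these six columns are distinct, completing $F(1,2,2,1)$.

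The main obstacle, and the only delicate point, is temporal: a primary-forcing implication from an edge $i\to j$ is valid on a column $\beta$ only when $\beta$ is controlled by that edge, i.e.\ $\beta$ still survives when the witness of $i\to j$ is removed. Each of the six deductions above therefore presupposes an inequality between removal times of the column being constrained and of the witness used to constrain it, and these cannot all be assumed simultaneously without care. I would resolve this exactly as in Lemmas~\ref{Lemma directed cycle free} and~\ref{Lemma transitivity edge free}: pass to the last of the four diamond edges to be inserted, so that the domination relations of the other three edges are already in force on the matrix $A_s$ present at that step, and use the symmetries of the configuration---the $a\leftrightarrow b$ automorphism of the diamond together with the fact that $F(1,2,2,1)$ is self-complementary, so complementing $A$ and reversing every forcing edge is a symmetry of the whole procedure---to reduce to a single normalized case, say the last edge being $b\to c$. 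The remaining work is to verify, in this case, that each of the six required columns is genuinely controlled by the edge invoked to pin its entries, and largeness enters only to guarantee that the procedure runs long enough for all four diamond edges and the two auxiliary $K_2$'s to appear. I expect establishing this controllability for all six columns at once to be the crux of the argument.
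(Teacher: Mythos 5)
There is a genuine gap here, and it is not the ``controllability'' bookkeeping you defer to the end of your argument --- the two extra columns you need do not in general exist, so no amount of care about removal times can complete the plan. You aim to exhibit $F(1,2,2,1)$ on the middle rows $a,b$ by producing, besides the four witnesses, one column reading $\binom{1}{1}$ and one reading $\binom{0}{0}$ on $(a,b)$. But the paper's own analysis shows these cannot coexist: by primary forcing, every controlled non-witness column with a $1$ in the sink row $c$ is all $1$'s on the four diamond rows, every one with a $0$ in the source row $d$ is all $0$'s, and since the witnesses already supply $\left[\begin{smallmatrix}1&0&0&1\\0&1&1&0\end{smallmatrix}\right]$ on rows $a,b$, having both constant types is exactly $F(1,2,2,1)\prec A$, contradicting $A\in\Av(m,F)$. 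So in any diamond-containing matrix one of your two required columns is simply absent, and the extraction from the $K_2$'s must break somewhere. Where it breaks is visible in the forced $4\times 4$ witness submatrix itself (rows ordered $d,a,b,c$): the witness $\alpha_{bc}=(1,1,0,1)^T$ reads $\binom{1}{1}$ on rows $(a,c)$, so the $\binom{1}{1}$-column of the $K_2$ present when $a\to c$ is created may be $\alpha_{bc}$ itself --- and the forcing ``$b\to c$ gives $b=1$ from $c=1$'' fails for it precisely because it is the exceptional witness of $b\to c$, with $b$-entry $0$. Dually, the $\binom{0}{0}$-column of the $K_2$ on $(d,b)$ may be $\alpha_{da}=(0,1,0,0)^T$, the exception of $d\to a$, with $a$-entry $1$. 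Your six columns can thus collapse to four, and the claim that the $(a,b)$-patterns certify distinctness is circular, since the two extra patterns are never actually obtained.

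A second symptom of the same defect: your argument invokes largeness only to ensure the procedure runs, so if it succeeded it would prove that \emph{every} procedure graph is diamond-free, whereas the lemma is stated (and provable) only for large ones. The paper instead turns the non-coexistence of the two constant types into the contradiction by counting: WLOG every non-witness column has a $0$ in the sink row $c$, so row $c$ carries $1$'s only in the two witnesses $\alpha_{ac},\alpha_{bc}$; deleting row $c$ together with those two columns leaves a simple $A'\in\Av(m-1,F)$ with $\ncols{A'}=\ncols{A}-2$, and the hypothesis $\ncols{A}\ge\lfloor\frac{m^2}{4}\rfloor+m+1-(\lfloor\frac{m}{2}\rfloor-2)$ makes $\ncols{A'}>\forb(m-1,F)=\lfloor\frac{(m-1)^2}{4}\rfloor+m$. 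That final counting step, which has no analogue in your proposal, is exactly where the largeness hypothesis is indispensable.
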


\begin{proof}
    By way of contradiction, suppose a procedure graph $G$ contains a diamond. Then, we have the following configurations with the submatrix on those 4 edgees. 
    \[
    \begin{bmatrix}
                 0&0 &  & \\
                 1&  & 0 & \\
                 & 1 &  &0 \\
                 &  &  1& 1\\
    \end{bmatrix}
    \]
    Primary forcing results in the following matrix: 
    \[
    \begin{matrix}
        \begin{matrix}
            i\\j_1\\j_2\\k
        \end{matrix}&
        \begin{bmatrix}
                 0&0&1&1 \\
                 1&0&0&1\\
                 0&1&1&0 \\
                 0&0&1& 1\\
    \end{bmatrix}
    \end{matrix}
    \]
    Consider the columns other than the ones containing the four forcing edges. Among the four rows corresponding to those four vertices (denoted as $i, j_1, j_2, k$), if the $k$\textsuperscript{th} row of a column has a 1, it must have all 1's on the four rows, by primary forcing. Similarly, if the $i$\textsuperscript{th} row has a 0 in a column, then that column must have all 0's in rows $i, j_1, j_2, k$. Since the pair $i_1, i_2$ of rows contains $\begin{bmatrix}
        1&0&0&1\\0&1&1&0
    \end{bmatrix}$, we know that we can not have both an all 1's column and an all 0's column. This means that either entries in $k$\textsuperscript{th} row in other columns are 0 or entries in $i$\textsuperscript{th} row in other columns are 1. WIthout loss of generality, we may assume that entries $k$\textsuperscript{th} row in other columns are 0. Now, if $k$\textsuperscript{th} row is removed together with the two columns($\begin{smallmatrix} 1\\0\\1\\1\end{smallmatrix}$ and $\begin{smallmatrix}
        1\\1\\0\\1
    \end{smallmatrix}$), an $m-1$-rowed simple matrix $A'\in\Av(m-1,F)$ is obtained with $\ncols{A'}>\forb(m-1,F)$, a contradiction. 
\end{proof}

\begin{lemm}\label{lemma:00-free11-free}
    If $A \in \Av(\left[\begin{smallmatrix} 0 \\ 0 \end{smallmatrix}\right])$, then $A \prec [\1I^c]$. If $A \in \Av(\left[\begin{smallmatrix} 1 \\ 1 \end{smallmatrix}\right])$, then $A \prec [\0I]$. 
\end{lemm}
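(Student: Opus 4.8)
The plan is to observe that avoiding the single $2\times 1$ configuration $\left[\begin{smallmatrix}0\\0\end{smallmatrix}\right]$ is an extremely rigid pointwise restriction on the \emph{columns} of $A$, and then simply to enumerate the admissible columns. First I would unwind the definition of configuration in this smallest case: the matrix $\left[\begin{smallmatrix}0\\0\end{smallmatrix}\right]$ occurs as a configuration in $A$ precisely when some single column of $A$ has two entries equal to $0$ (in two distinct rows). Hence $A\in\Av(\left[\begin{smallmatrix}0\\0\end{smallmatrix}\right])$ says exactly that every column of $A$ has at most one $0$ entry.

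Next I would classify the $m$-rowed $(0,1)$-columns having at most one $0$: such a column either has no $0$ at all, in which case it is $\1_m$, or it has exactly one $0$, in which case it is one of the $m$ columns of $I_m^c$. Thus every column of $A$ is a column of $[\1_m\,I_m^c]$. Since $A$ is simple, its columns are pairwise distinct, so $A$ is (up to column permutation) a submatrix of $[\1_m\,I_m^c]$, which is exactly the assertion $A\prec[\1\,I^c]$.

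For the second statement I would pass to $(0,1)$-complements: $A\in\Av(\left[\begin{smallmatrix}1\\1\end{smallmatrix}\right])$ means every column of $A$ has at most one $1$, so every column is either $\0_m$ or one of the $m$ columns of $I_m$, giving $A\prec[\0\,I]$ by the identical enumeration argument applied to $A^c$.

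There is essentially no hard step here, so I do not anticipate a genuine obstacle; the only points requiring care are the direction of the inference (a forbidden $\left[\begin{smallmatrix}0\\0\end{smallmatrix}\right]$ restricts columns, not rows) and the explicit use of simplicity, which is what upgrades ``every column lies among the columns of $[\1\,I^c]$'' to a genuine configuration containment $A\prec[\1\,I^c]$.
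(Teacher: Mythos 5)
Your proposal is correct and follows essentially the same route as the paper's proof: both unwind the configuration condition to ``each column has at most one $0$,'' observe that $[\1\,I^c]$ lists exactly those columns, and obtain the second statement by swapping $0$'s and $1$'s. Your added remarks on simplicity and the column-versus-row direction of the inference are sound but do not change the argument.
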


\begin{proof}
    Assume that $A \in \Av(\left[\begin{smallmatrix} 0 \\ 0 \end{smallmatrix}\right])$. This means that each column has at most one zero entry. Then, $[\1I^c]$ represents all possible columns with at most one zero entry. The second statement follows from the same argument by swapping 0 and 1. 
\end{proof}

Now, we can prove Theorem~\ref{thm:F1221stab}.
\begin{thm}
    Let $m \geq 8$ and $A\in\Av(m,F)$ with $\ncols{A} \geq (\lfloor \frac{m^2}{4} \rfloor + m + 1) - (\lfloor \frac{m}{2} \rfloor - 2)$, then $A \prec [\1I^c]_k \times [\0I]_{m-k}$ for some $k$.
\end{thm}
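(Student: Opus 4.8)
The plan is to read a bipartition of the rows off the procedure graph and show that it realizes the two ``avoid'' conditions underlying the extremal product. Since the number of forcing edges is $\ncols{A}-m-1 \geq \lfloor m^2/4\rfloor - \lfloor m/2\rfloor + 2$, Lemma~\ref{undirected bipartite graph} supplies a bipartition $V = X \sqcup Y$ of the $m$ rows with $|X|=k$ and every forcing edge between $X$ and $Y$. Comparing this edge count with the maximum possible $k(m-k)\le\lfloor m^2/4\rfloor$ forces $k(m-k)$ to within $\lfloor m/2\rfloor - 2$ of $\lfloor m^2/4\rfloor$; hence $k$ is close to $m/2$ and at most $\lfloor m/2\rfloor-2$ of the pairs of $X\times Y$ are non-edges. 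The target $A \prec [\1I^c]_k \times [\0I]_{m-k}$ then reduces, via Lemma~\ref{lemma:00-free11-free} together with the simplicity of $A$, to showing that after labelling the two parts correctly every column of $A$ has at most one $0$ on the rows of $X$ and at most one $1$ on the rows of $Y$; equivalently $A|_X \in \Av(\left[\begin{smallmatrix}0\\0\end{smallmatrix}\right])$ and $A|_Y \in \Av(\left[\begin{smallmatrix}1\\1\end{smallmatrix}\right])$.

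The heart of the argument is to orient the bipartition: I would show that, after possibly exchanging the names $X$ and $Y$, every forcing edge is directed from $X$ to $Y$. Recall that a forcing edge $i\to j$ (with $0$ going to $1$) means, by primary forcing, that in every column other than its witness $c_j=1$ implies $c_i=1$. In the near-complete bipartite digraph $G$, a vertex carrying both an in-edge and an out-edge to the opposite side is forbidden: any $2$-path $s\to a\to t$ together with a second $2$-path $s\to b\to t$ is a diamond, excluded by Lemma~\ref{Lemma diamond free}, while a backward edge chained onto a forward path yields the single-edge transitive pattern excluded by Lemma~\ref{Lemma transitivity edge free}; near-completeness (all but $\lfloor m/2\rfloor-2$ opposite-side pairs present) guarantees the common neighbours needed to build these configurations. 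Carrying out this counting is the step I expect to be the main obstacle, and it is where the hypothesis $m\ge 8$ is used to guarantee enough common neighbours. The outcome is that no vertex is ``mixed'', each part is uniformly a set of sources or of sinks, and $G$ is a bipartite digraph oriented $X\to Y$ that is complete apart from at most $\lfloor m/2\rfloor-2$ missing edges.

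With uniform orientation in hand I would pin down the columns by primary forcing. For an edge $i\to j$ with witness $w_{ij}$, forcing from all edges $i'\to j$ with $i'\neq i$ and $i\to j'$ with $j'\neq j$ forces $w_{ij}$ to be $1$ on $X\setminus\{i\}$ and $0$ on $Y\setminus\{j\}$, so $w_{ij}$ is exactly the extremal column ``$X$ missing $i$'' on top and ``$e_j$'' on the bottom. Suppose some column $c$ had two $0$'s, on rows $i,i'\in X$. Using that $i$ and $i'$ each have at least two out-neighbours in $Y$ (near-completeness together with $m\ge 8$), the witnesses $w_{ij},w_{i\tilde j}$ contribute two columns reading $\left[\begin{smallmatrix}0\\1\end{smallmatrix}\right]$ on rows $i,i'$, the witnesses $w_{i'j'},w_{i'\tilde j'}$ contribute two columns reading $\left[\begin{smallmatrix}1\\0\end{smallmatrix}\right]$, some column that is $1$ on both $i,i'$ (for instance a witness $w_{i''j}$ with $i''\neq i,i'$) gives $\left[\begin{smallmatrix}1\\1\end{smallmatrix}\right]$, and $c$ itself gives $\left[\begin{smallmatrix}0\\0\end{smallmatrix}\right]$; these six columns on rows $i,i'$ form $F(1,2,2,1)$, contradicting $A\in\Av(m,F)$. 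Hence $A|_X\in\Av(\left[\begin{smallmatrix}0\\0\end{smallmatrix}\right])$, and the symmetric argument on $Y$ (two $1$'s producing $F$ via all-$0$ and witness columns) gives $A|_Y\in\Av(\left[\begin{smallmatrix}1\\1\end{smallmatrix}\right])$.

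Finally, Lemma~\ref{lemma:00-free11-free} yields $A|_X\prec[\1I^c]_k$ and $A|_Y\prec[\0I]_{m-k}$. Since $A$ is simple, each of its columns is determined by its pair (top type, bottom type), and every such pair occurs among the $(k+1)(m-k+1)$ columns of $[\1I^c]_k\times[\0I]_{m-k}$; therefore $A\prec[\1I^c]_k\times[\0I]_{m-k}$, as required. The one point demanding real care is the orientation step, since without uniform orientation the primary-forcing deductions about the witness columns, and hence the copy of $F(1,2,2,1)$ that rules out a second $0$ in $X$, break down.
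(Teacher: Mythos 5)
Your proposal is correct and follows the paper's overall strategy quite closely --- procedure graph, bipartiteness from the edge count via Lemma~\ref{undirected bipartite graph}, orientation forced by Lemmas~\ref{Lemma directed cycle free}, \ref{Lemma transitivity edge free} and \ref{Lemma diamond free}, witness columns supplying two $\left[\begin{smallmatrix}0\\1\end{smallmatrix}\right]$'s, two $\left[\begin{smallmatrix}1\\0\end{smallmatrix}\right]$'s and a $\left[\begin{smallmatrix}1\\1\end{smallmatrix}\right]$ on a pair of same-side rows so that any $\left[\begin{smallmatrix}0\\0\end{smallmatrix}\right]$ (resp.\ $\left[\begin{smallmatrix}1\\1\end{smallmatrix}\right]$) would complete $F(1,2,2,1)$, and finally Lemma~\ref{lemma:00-free11-free} --- but you execute the orientation step differently, and it is worth comparing. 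The paper never establishes your global claim that the entire digraph is uniformly oriented across the bipartition: it instead shows by a degree count that each side has two vertices of full degree ($v_1,v_2\in V$, $w_1,w_2\in W$), orients the $4$-cycle on these, propagates the orientation only to edges incident to these four vertices, and then routes the $F$-construction for an arbitrary pair $w_3,w_4$ entirely through the guaranteed common descendants $v_1,v_2$ and a third vertex $w_5$, so adjacency is automatic and no common-neighbour counting is ever needed. You prove the stronger statement that no vertex is mixed and each part is uniformly sources or sinks, and the counting you flag as the main obstacle does go through: the non-edge budget is at most $\lfloor m/2\rfloor-2$ (in fact at most $\lfloor m/2\rfloor-2-t^2$ when the small side has $\lfloor m/2\rfloor-t$ vertices), so a mixed vertex $x$ with $y_1\to x\to y_2$ admits some $x'$ adjacent to both $y_1,y_2$, and all four orientations of the two edges at $x'$ are excluded --- two by single-edge transitivity, one by a diamond, and one by a directed $4$-cycle; note that this last case needs Lemma~\ref{Lemma directed cycle free}, which your sketch omits at that step, and merging the two parts into uniform types needs one more diamond argument via two common neighbours. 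Similarly, your final construction requires $j,\tilde j$ to be common out-neighbours of $i,i'$ and some $i''\neq i,i'$ adjacent to one of them; the same budget supplies these for $m\ge 8$, though you should also soften the claim that a witness $w_{ij}$ is \emph{exactly} the extremal column, since forcing only fills the entries where the relevant edges exist. In sum, your route is sound but pays for the stronger intermediate claim with extra counting in two places where the paper's full-degree-vertex device makes it free; what your version buys is the cleaner structural picture that every forcing edge crosses from $X$ to $Y$ together with an explicit description of the witness columns.
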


\begin{proof}
Let $m \geq 8$ and $A\in\Av(m,F)$ with $\ncols{A} \geq (\lfloor \frac{m^2}{4} \rfloor + m + 1) - (\lfloor \frac{m}{2} \rfloor - 2)$.
By Lemma \ref{undirected bipartite graph}, the procedure graph $G$ is bipartite (in the undirected sense). Write $G = (V, W, E)$ with edges in $E$ are between $V$ and $W$ only. $|V| + |W| = m$, without loss of generality let $|V| \leq |W|$ and write $|V| = \lfloor \frac{m}{2} \rfloor - j$ and $|W| = \lceil \frac{m}{2} \rceil + j$ for $j \in \mathbb{Z}^{\geq 0}$. 

\begin{claim}
    Both $V$ and $W$ have at least 2 vertices of full degree.
\end{claim}

\begin{proof}
Assume for the sake of contradiction that at least one of $V$ or $W$ has fewer than $2$ vertices of full degree. Let $B \in \{V, W\}$ be this side, and $C \in \{V, W\}$ be the other. Then all but one vertex in $B$ have degree at most $|C| - 1$. So:

\begin{align*}
|E| &= \sum_{v \in B} \deg(v) \leq |C| + (|B| - 1) (|C| - 1) = |B| |C| - |B| + 1 \\ & \leq |W| |V| - \min(|W|, |V|) + 1 \Rightarrow \\
 |E| &\leq (\lceil \frac{m}{2} \rceil + j)(\lfloor \frac{m}{2} \rfloor - j) - (\lfloor \frac{m}{2} \rfloor - j) + 1 = \lfloor \frac{m^2}{4} \rfloor - \lfloor \frac{m}{2} \rfloor + 1 + (j - j^2)
\end{align*}

But note that $j - j^2 \leq 0$ for all possible $j$. So this implies that $\ncols{A} - (m+1) = |E| \leq \lfloor \frac{m^2}{4} \rfloor - \lfloor \frac{m}{2} \rfloor + 1$, a contradiction with our assumption on the size of $\ncols{A}$. 
\end{proof}

So we have $v_1, v_2 \in V$, $w_1, w_2 \in W$, all with full degree. This gives us an undirected four-cycle $w_1, v_1, w_2, v_2, w_1$. By Lemma \ref{Lemma directed cycle free}, this cannot be oriented as a directed four-cycle. By Lemma \ref{Lemma transitivity edge free}, this cannot be oriented as a three path and a single transitive edge. By Lemma \ref{Lemma diamond free}, this cannot be oriented as a diamond. The only remaining options are $w_1 \rightarrow v_1 \leftarrow w_2 \rightarrow v_2 \leftarrow w_1$ or $w_1 \leftarrow v_1 \rightarrow w_2 \leftarrow v_2 \rightarrow w_1$. So these edges are either all directed from $W$ to $V$ or all from $V$ to $W$. Without loss of generality say it is the former.

Consider any $v' \in V \setminus \{v_1, v_2\}$. This vertex also connects to both $w_1$ and $w_2$, so there is also an undirected four-cycle $w_1, v', w_2, v_1, w_1$. Given that $w_2 \rightarrow v_1$ and $w_1 \rightarrow v_1$  are already oriented this way, all four edges must be oriented from $W$ to $V$ to prevent diamonds or single-edge transitivity. So all of the edges between $\{w_1, w_2\}$ and $V$ will be directed from $W$ to $V$. With identical logic, we see that all of the edges between $W$ and $\{v_1, v_2\}$ will be directed from $W$ to $V$.

Consider an arbitrary pair of vertices $w_3, w_4 \in W$. They have two common descendants $v_1$ and $v_2$. Since $m \geq 8$, we have $2(m - 2) < \lfloor \frac{m^2}{4} \rfloor - \lfloor \frac{m}{2} \rfloor + 2$ the number of edges in $E$, so there must be more than $2$ vertices in $W$. Let $w_5 \in W \setminus \{w_3, w_4\}$, which as shown will also have $v_1$ as a descendant. Consider the columns of $A$ corresponding to these five edges in the procedure:

\begin{align*}
\begin{array}{c|c c c c c}
    w_3 & 0 & \color{red}{1} & 0 & \color{red}{1} & \color{red}{1} \\
    w_4 & \color{red}{1} & 0 & \color{red}{1} & 0 & \color{red}{1} \\
    w_5 & & & & & 0 \\
    \hline
    v_1 & 1 & 1 & & & 1 \\
    v_2 & & & 1 & 1 & \\
\end{array}
\end{align*}

Shown in black are the single $\left[\begin{smallmatrix} 0 \\ 1 \end{smallmatrix}\right]$'s each edge in $G$ corresponds to. Shown in red are entries that are forced to be $1$, to prevent a second $\left[\begin{smallmatrix} 0 \\ 1 \end{smallmatrix}\right]$ from either $w_3$ or $w_4$ to either $v_1$ or $v_2$. Then in order to forbid $F$, there cannot be a $\left[\begin{smallmatrix} 0 \\ 0 \end{smallmatrix}\right]$ anywhere in the rows corresponding to $w_3$ and $w_4$.

This applies to any pair of rows corresponding to a pair of vertices in $W$. Meanwhile, an identical argument replacing $0$'s with $1$'s in the above matrix shows that for any pair of vertices in $V$, their corresponding rows must forbid $\left[\begin{smallmatrix} 1 \\ 1 \end{smallmatrix}\right]$.

So the rows of $A$ are split into two groups corresponding to $W$ and $V$. The former group forbids $\left[\begin{smallmatrix} 0 \\ 0 \end{smallmatrix}\right]$ and the latter forbids $\left[\begin{smallmatrix} 1 \\ 1 \end{smallmatrix}\right]$. Then by Lemma~\ref{lemma:00-free11-free}, $A \prec [\1\,\,I^c]_k \times [\0\,\,I]_{m-k}$.
\end{proof}

\begin{cor}
    If $A \in \mathrm{ext}(F)$, that is $\ncols{A} = \lfloor \frac{m^2}{4} \rfloor + m + 1$, then $A = [\1I^c]_{m/2} \times [\0I]_{m/2}$.
\end{cor}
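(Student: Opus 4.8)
The plan is to derive the corollary directly from \trf{thm:F1221stab} together with a short counting and optimization argument, since an extremal matrix automatically meets that theorem's hypothesis. First I would note that $A\in\ext(F)$ means $\ncols{A}=\lfloor \frac{m^2}{4}\rfloor+m+1 \geq (\lfloor \frac{m^2}{4}\rfloor+m+1)-(\lfloor \frac{m}{2}\rfloor-2)$, so \trf{thm:F1221stab} applies and yields $A \prec [\1 I^c]_k \times [\0 I]_{m-k}$ for some $k$.

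Next I would count the columns of the dominating matrix. The block $[\1 I^c]_k$ is, by \lrf{lemma:00-free11-free}, exactly the list of all $k$-rowed columns with at most one $0$, of which there are $k+1$; symmetrically $[\0 I]_{m-k}$ has $(m-k)+1$ columns. By the definition of the product, $[\1 I^c]_k \times [\0 I]_{m-k}$ therefore has exactly $(k+1)(m-k+1)$ columns.

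The core step is then an elementary optimization. Writing $a=k+1$ and $b=m-k+1$, the sum $a+b=m+2$ is fixed, so the product $ab=(k+1)(m-k+1)$ is maximized when $a$ and $b$ are as equal as possible. For $m$ even this maximum equals $\left(\frac{m+2}{2}\right)^2=\frac{m^2}{4}+m+1=\lfloor \frac{m^2}{4}\rfloor+m+1$, and it is attained only at $a=b=\frac{m+2}{2}$, i.e. only at $k=\frac{m}{2}$.

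Finally I would combine the two observations. Since $A$ is a configuration of the $m$-rowed matrix $[\1 I^c]_k \times [\0 I]_{m-k}$, we have $\ncols{A}\le (k+1)(m-k+1)$; but $\ncols{A}=\lfloor \frac{m^2}{4}\rfloor+m+1$ equals the maximum possible value of $(k+1)(m-k+1)$, which forces both $k=\frac{m}{2}$ and equality in the column count. Hence $A$ uses every column of $[\1 I^c]_{m/2} \times [\0 I]_{m/2}$, so up to the row and column permutations we ignore throughout, $A = [\1 I^c]_{m/2} \times [\0 I]_{m/2}$. There is no substantial obstacle here; the only point demanding care is confirming that $k=\frac{m}{2}$ is the unique maximizer for $m$ even, which guarantees that the extremal matrix is genuinely unique rather than merely one of several candidate block sizes.
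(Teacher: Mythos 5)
Your proposal is correct and follows essentially the same route as the paper: apply \trf{thm:F1221stab}, count the $(k+1)(m-k+1)$ columns of the product, and observe that this equals $\lfloor \frac{m^2}{4}\rfloor+m+1$ only when $k$ splits $m$ as evenly as possible, forcing $A$ to be the entire product. The only difference is that the paper phrases the optimization via the identity $\lfloor \frac{m^2}{4}\rfloor+m+1=(\lceil \frac{m}{2}\rceil+1)(\lfloor \frac{m}{2}\rfloor+1)$, which also covers odd $m$ (with $k$ either $\lfloor \frac{m}{2}\rfloor$ or $\lceil \frac{m}{2}\rceil$), whereas you spell out only the even case matching the corollary's notation.
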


\begin{proof}
    By Theorem~\ref{thm:F1221stab}, $A \subseteq [\1_k\,\,I^c_k] \times [\0_{m-k}\,\,I_{m-k}]$ for some $k$. Then the size of the direct product satisfies:
    \begin{align*}
        (k+1)(m-k+1) = |[1I^c]_m \times [0I]_{m-k}| \geq |A| = \lfloor \frac{m^2}{4} \rfloor + m + 1 = (\lceil \frac{m}{2} \rceil + 1)(\lfloor \frac{m}{2} \rfloor + 1)
    \end{align*}

    This can only be satisfied by taking $k = m/2$ (either floor or ceiling), and letting $A$ be the entirety of the product. So $A = [1I^c]_{m/2} \times [0I]_{m/2}$.
\end{proof}

\section{Within a constant of the bound}\label{sec:ext-1}
This section has some cases where when $A\in\Av(m,F)$ with $\ncols{A}=\forb(m,F)-1$, then $A$ has some of the  structure of matrices in $\ext(m,F)$.
\subsection{$2\cdot[\1_3\,|\,\1_2\0_1]$}
Let us recall Theorem~\ref{2(13 1201)}.
\begin{thm}$\ext(m,2\cdot[\1_3\,|\,\1_2\0_1])=\ext(m,\1_4)=[K_m^3K_m^2K_m^1K_m^0]$.
 Moreover if $A\in\Av(m,2\cdot[\1_3\,|\,\1_2\0_1])$ and 
 $\ncols{A}\ge \forb(m,\1_4)-1$, then $A\prec [K_m^3K_m^2K_m^1K_m^0]$.\end{thm}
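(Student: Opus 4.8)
The plan is to reduce everything to a single statement, proved by standard induction on $m$: if $A\in\Av(m,F)$ has a column of sum at least $4$, then $\ncols{A}\le\forb(m,\1_4)-2$. First I would record the elementary translation: since $[K_m^3K_m^2K_m^1K_m^0]$ consists of \emph{all} columns of sum at most $3$ and is invariant under row permutations, for an $m$-rowed simple $A$ one has $A\prec[K_m^3K_m^2K_m^1K_m^0]$ if and only if every column of $A$ has sum at most $3$, i.e. $A\in\Av(m,\1_4)$. Moreover $\Av(m,\1_4)\subseteq\Av(m,F)$: a column of sum $\le 3$ that is all $1$ on three prescribed rows is uniquely determined, so two distinct such columns cannot supply the two copies of $\1_3$ that $F$ needs. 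Hence $[K_m^3K_m^2K_m^1K_m^0]\in\Av(m,F)$ and $\forb(m,\1_4)\le\forb(m,F)$.

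For the reverse bound and the inductive children I would use that $F=[0\,1]\times(2\cdot\1_2)$, exactly parallel to $F_2=[0\,1]\times I_2$ in \srf{sec:F2}; thus the only inductive child of $F$ is $2\cdot\1_2=\left[\begin{smallmatrix}1&1\\1&1\end{smallmatrix}\right]$, and in \rf{standard} one gets $C_r\in\Av(m-1,2\cdot\1_2)$. A short count (in a $2\cdot\1_2$-free simple matrix each pair of rows is all $1$ on at most one column, so one may take every column of sum $\le 2$ and no more) gives $\forb(m,2\cdot\1_2)=\binom{m}{0}+\binom{m}{1}+\binom{m}{2}$. Feeding $\ncols{C_r}\le\forb(m-1,2\cdot\1_2)$ into \rf{induction} and using Pascal's identity $\binom{m}{k}=\binom{m-1}{k}+\binom{m-1}{k-1}$ shows by induction that $\forb(m,F)\le\binom{m}{0}+\binom{m}{1}+\binom{m}{2}+\binom{m}{3}=\forb(m,\1_4)$. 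Combined with the previous paragraph this gives $\forb(m,F)=\forb(m,\1_4)$ and, crucially, that the bound in \rf{induction} is \emph{additive}: $\forb(m,F)=\forb(m-1,F)+\forb(m-1,2\cdot\1_2)$.

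The heart of the argument is the stability step. Assume $\ncols{A}\ge\forb(m,\1_4)-1$. By additivity the total slack in \rf{induction} is at most $1$, so for \emph{every} row $r$ we have $\ncols{[B_rC_rD_r]}\ge\forb(m-1,\1_4)-1$; since $[B_rC_rD_r]\in\Av(m-1,F)$, the induction hypothesis at $m-1$ gives that \emph{every column of $[B_rC_rD_r]$ has sum at most $3$}. Now suppose for contradiction that $A$ has a column $\gamma$ of sum $\ge 4$. If $\gamma\ne\1_m$ I would choose a row $r\notin\mathrm{supp}(\gamma)$; then $\gamma$ occurs with a $0$ in row $r$, so $\gamma$ restricted to the rows $\ne r$ is a column of $[B_rC_rD_r]$ of sum $\ge 4$, contradicting the previous sentence. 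If $\gamma=\1_m$ I would take any $r$, whose restriction $\1_{m-1}$ still has sum $m-1\ge 4$, a column of $[B_rC_rD_r]$, the same contradiction. Hence $A$ has no column of sum $\ge 4$, i.e. $A\in\Av(m,\1_4)$ and $A\prec[K_m^3K_m^2K_m^1K_m^0]$. Specializing to $\ncols{A}=\forb(m,\1_4)$ shows every extremal matrix lies in $\Av(m,\1_4)$, so $\ext(m,F)\subseteq\ext(m,\1_4)=\{[K_m^3K_m^2K_m^1K_m^0]\}$ (this is the unique way to realize $\binom{m}{0}+\binom{m}{1}+\binom{m}{2}+\binom{m}{3}$ distinct columns all of sum $\le 3$), while the reverse inclusion is the membership noted in paragraph one; thus $\ext(m,F)=\ext(m,\1_4)$.

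The one genuine subtlety, and the step I expect to be the main obstacle, is exactly the choice of pivot row: a sum-$4$ column of $A$ restricts to a harmless sum-$3$ column whenever the deleted row lies in its support, so naive standard induction ``loses'' the heavy column; deleting a row \emph{outside} the support is precisely what preserves the high sum and produces the contradiction. This forces a small base-case check for $m\le 4$ (where a heavy column can only be $\1_m$ and no outside row exists), which I would dispatch by direct inspection, the cases $m\le 3$ being vacuous since no column of sum $\ge 4$ exists.
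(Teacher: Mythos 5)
Your proposal is correct, and although it reuses the paper's scaffolding --- standard induction \rf{standard} with the single inductive child $2\cdot\1_2$ of $F=[0\,1]\times(2\cdot\1_2)$, the pair-counting bound $\forb(m-1,2\cdot\1_2)=\binom{m-1}{2}+\binom{m-1}{1}+\binom{m-1}{0}=\forb(m-1,\1_3)$, and Pascal's identity to get $\forb(m,F)=\forb(m,\1_4)$ --- it finishes the crucial step by a genuinely different mechanism. The paper, at a row $r$ with $\ncols{C_r}=\forb(m-1,\1_3)-1$, applies the inductive hypothesis only in its extremal form to conclude $[B_rC_rD_r]=[K_{m-1}^3K_{m-1}^2K_{m-1}^1K_{m-1}^0]$, deduces that a heavy column $\alpha$ must have sum exactly $4$ with a $1$ in row $r$, and then exhibits a copy of $2\cdot[\1_3\,|\,\1_2\0_1]$ by hand from $\alpha$ and columns of the nearly complete $C_r$, with a case split on whether the column $\beta$ with $1$'s in rows $a,b$ is present. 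You instead carry the full stability statement as the induction hypothesis, observe that the slack bound $\ncols{[B_rC_rD_r]}\ge\forb(m-1,\1_4)-1$ holds for \emph{every} row $r$, and kill a column $\gamma$ of sum at least $4$ by pivoting on a row \emph{outside} its support, so that $\gamma$ survives into the simple matrix $[B_rC_rD_r]$ with sum still at least $4$, contradicting the hypothesis at $m-1$ directly; all the steps (the equivalence $A\prec[K_m^3K_m^2K_m^1K_m^0]$ iff all column sums are at most $3$, the inclusion $\Av(m,\1_4)\subseteq\Av(m,F)$ via simplicity, and the additivity computation) check out. What your route buys is the complete elimination of the paper's configuration-hunting case analysis; what it costs is that the inductive step needs $m\ge 5$ (a row off the support of a sum-$4$ column, or sum $m-1\ge 4$ when $\gamma=\1_m$), so the base case $m=4$ is genuinely load-bearing and is the one place where a copy of $F$ must actually be found. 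You correctly flag this, and the inspection does close: if $\1_4\in A$ and $\ncols{A}\ge\forb(4,\1_4)-1=14$, then at most two of the fifteen columns of sum at most $3$ are absent; for any triple of rows whose sum-$3$ column is present, that column together with $\1_4$ supplies the two $\1_3$'s, and avoiding $F$ on that triple requires an absent column for each of the three choices of the zero row (six pairwise distinct candidates), while deleting the sum-$3$ column of all four triples costs four absences --- either way more than the two available. With that finite check written out, your proof is complete and arguably cleaner than the published one.
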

 
 \proof The inductive child of $2\cdot[\1_3\,|\,\1_2\0_1]$ is
 $2\cdot\1_2$. Thus from standard induction $C_r\in\Av(m-1,2\cdot\1_2)$. Given 
 $\ncols{A}\ge \forb(m,\1_4)-1$, we deduce that $\ncols{C_r}
 \ge \forb(m-1,\1_3)-1$. If $C_r$ has a column of sum $k$ for $k\ge 3$,  then this produces $\binom{k}{2}$ pairs of rows with 1's.  For $k\ge 3$,  $\binom{k}{2}\ge 3$. By pigeonhole argument on pairs of rows with 1's we deduce 
 $\ncols{C_r}\le\forb(m-1,\1_3)-2$, a contradiction.
 If $\ncols{A}=\forb(m,\1_4)$, then $\ncols{C_r}=\forb(m-1,\1_3)$ for all $r$. If $\ncols{C_r}=\forb(m-1,\1_3)$ for all $r$ then $C_r$ has all columns of sum 0,1,2  and hence $A=[K_m^3K_m^2K_m^1K_m^0]=\ext(m,2\cdot\1_3)=\ext(m,\1_4)$.
 
 Assume  $\ncols{C_r}=\forb(m-1,\1_3)-1$. We deduce from standard induction that $\ncols{[B_rC_rD_r]}=\forb(m-1,\1_4)$ and hence by induction on $m$, we have 
 $[B_rC_rD_r]=[K_{m-1}^3K_{m-1}^2K_{m-1}^1K_{m-1}^0]$. Now if $A$ has a column $\alpha$ of 4 or more 1's then by the structure of $C_r$ such a column must be of sum exactly 4 with a 1 in row $r$. Then since $\ncols{C_r}=\forb(m-1,\1_3)-1$ and $C_r$ has no column of sum 3, $C_r$ will  have all the columns but one of sum 0,1,2 on $m-1$ rows (not $r$). Considering  four rows $r,a,b,c$ with 1's in $\alpha$, we find a copy of $2\cdot[\1_3\,|\,\1_2\0_1]\prec A$ using column $\alpha$ and three columns of $C_r$ with 1's in row $r$. If a column $\beta$ with 1's in row $a,b$ is in $C_r$ then we only need two additional columns of $C_r$ with 1's in row $a$ and 0's in row $b$. If  column $\beta$ is not present, then choose the column with 1's in rows a,c.\qed

 \subsection{$F(0,1,3,0)$}
 
 We have  $F(0,1,3,0) = \begin{bmatrix}
       1 & 0 & 0 & 0 \\ 0 & 1 & 1 & 1
   \end{bmatrix}$.
\begin{thm}\cite{small_II}\label{thm:small_II}
   $\forb(m, F(0,1,3,0)) = \left\lfloor\frac{7}{3}m\right\rfloor + 1$. \qed
\end{thm}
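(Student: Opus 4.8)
The plan is to prove matching bounds: the lower bound by an explicit construction and the upper bound by standard induction \rf{standard}--\rf{induction}. For the lower bound I would stack copies of $K_3$. The base matrix $A^{(3)}=K_3$ lies in $\Av(3,F)$, since on any two of its three rows each of the four $2$-rowed patterns occurs exactly twice, so neither $\onezero$ nor $\zeroone$ occurs three times and $F\not\prec K_3$. I then set
\[
A^{(m)}=\left[\begin{array}{c|c}\0_3 & K_3\\ \hline A^{(m-3)} & \1\end{array}\right],
\]
prepending three all-$0$ rows to a copy of $A^{(m-3)}$ and adjoining the eight columns that carry $K_3$ on the three new rows and all $1$'s below. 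Exactly one of those eight columns duplicates a column already present, so $\ncols{A^{(m)}}=\ncols{A^{(m-3)}}+7$. Avoidance is checked on a pair of rows in three cases: for two new rows each pattern occurs at most twice; for two old rows the eight new columns are constant $\oneone$, so the counts are inherited from $A^{(m-3)}$; and for one new and one old row the new row is dominated by the old one (no $\onezero$ occurs), so no constraint can be violated. With base cases $A^{(4)}=[\0\,I_4\,I_4^c\,\1]$ and $A^{(5)}=[\0\,I_5\,I_5^c\,\1]$ (having $10$ and $12$ columns) the recursion yields $\ncols{A^{(m)}}=\lfloor\tfrac{7m}{3}\rfloor+1$ in all three residue classes.

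For the upper bound I would first compute the inductive children of $F=F(0,1,3,0)$: a configuration $F'$ is forbidden in $C_r$ exactly when $F\prec[0\,1]\times F'$. Exploiting the free top row supplied by $[0\,1]$ forces the two $1$-rowed children consisting of a single row with one $0$ among three $1$'s, or one $1$ among three $0$'s, while pairing two rows of $F'$ with the column-doubling forces the $2$-rowed child $F(0,1,2,0)$. These three are pairwise incomparable and minimal, so $C_r\in\Av(m-1,\{F(0,1,2,0),[0\,1\,1\,1],[0\,0\,0\,1]\})$. The two $1$-rowed children say that every row of $C_r$ has at most two minority entries (at most two $1$'s or at most two $0$'s), and avoidance of $F(0,1,2,0)$ puts $C_r$ into the staircase block form of \trf{thm:F-FT-char}.

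The main obstacle is that the single-row inequality $\ncols{A}\le\forb(m-1,F)+\ncols{C_r}$ is \emph{not} tight here. In the extremal matrix $A^{(6)}$, for instance, every row has $\ncols{C_r}=4$, whereas $\forb(6,F)-\forb(5,F)=3$; the discrepancy is absorbed by $\ncols{[B_rC_rD_r]}=11<\forb(5,F)=12$. Thus no single row exposes the increment, and one cannot close the induction merely by selecting a row with $\ncols{C_r}$ equal to $\lfloor\tfrac{7m}{3}\rfloor-\lfloor\tfrac{7(m-1)}{3}\rfloor\in\{2,3\}$: the deficit in $[B_rC_rD_r]$ must be tracked jointly with the size of $C_r$.

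I expect the clean route is to amortize over three rows at a time, matching the $+7$ growth of the construction. Concretely, delete a triple $R$ of rows, group the columns of $A$ according to their (at most eight) patterns on $R$, and analyze the resulting multi-row $C_R$-object using the two structural facts above---the staircase form of \trf{thm:F-FT-char} together with the ``at most two minority entries per row'' condition. Following the graph-theoretic encoding of \cite{extremalpaper}, I would translate the incomparable pairs of rows (those carrying both $\onezero$ and $\zeroone$, hence each at most twice) into a bounded-degree graph superimposed on the chain coming from the dominated pairs, and show that this forces a net gain of at most $7$ columns per deleted triple. Carrying out this bookkeeping, and verifying the small cases $m\le 5$ by hand, is where the real work lies.
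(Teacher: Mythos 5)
Your lower bound is correct and complete: the staircase of $K_3$-blocks (note $K_3=[\0_3\,I_3\,I_3^c\,\1_3]$) is essentially the construction the paper records in Proposition~\ref{prop:add-3-cliques}, your count of exactly one duplicated column per added block checks out (the all-ones column persists inductively), and your three-case avoidance verification is sound. Your identification of the inductive children $\{F(0,1,2,0),[0\,1\,1\,1],[0\,0\,0\,1]\}$ is also correct, and in fact stronger than you exploit: the two one-rowed children alone force $\ncols{C_r}\le 4$ whenever $C_r$ has a non-constant row (a non-constant row must have at most two $1$'s and at most two $0$'s), so the staircase structure of $C_r$ is not even needed for the crude bound $\forb(m,F)\le\forb(m-1,F)+4$.

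The genuine gap is the upper bound, which is where the entire content of the theorem lies, and your proposal stops at a program rather than a proof. You correctly diagnose the obstruction --- the increments $\left\lfloor\frac{7}{3}m\right\rfloor-\left\lfloor\frac{7}{3}(m-1)\right\rfloor\in\{2,3\}$ are strictly smaller than the achievable $\ncols{C_r}=4$, so no single row closes the induction and the deficit $\forb(m-1,F)-\ncols{[B_rC_rD_r]}$ must be tracked jointly --- but the proposed remedy (delete a triple of rows, show a net gain of at most $7$ columns, via a bounded-degree graph on incomparable row pairs superimposed on a chain) is never executed. There is no lemma bounding the number of columns non-constant on a chosen triple, no argument selecting a \emph{good} triple (an arbitrary triple will not do: three rows can support more than $7$ non-constant columns unless they are chosen compatibly, e.g.\ as a clique of undirected edges in the sense of the graph $G(A)$ of Section~\ref{sec:ext-1}), and no treatment of the base cases beyond naming them. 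For comparison, the paper does not prove this theorem at all --- it cites \cite{small_II} --- but its Section~\ref{sec:ext-1} machinery (every pair of rows carries an edge $i-j$ or $i\rightarrow j$; columns are assigned to cliques; a clique of size $k\ge 3$ receives at most $2k+1$ columns) is exactly the kind of amortized accounting your sketch gestures at, and carrying it out is nontrivial: one must rule out the bad triples of Lemmas~\ref{nearopt} and~\ref{cliquecase} and then count over the clique decomposition. As written, your proposal establishes only $\forb(m,F(0,1,3,0))\ge\left\lfloor\frac{7}{3}m\right\rfloor+1$ together with the far weaker recursion $\forb(m,F)\le\forb(m-1,F)+4$.
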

Let $A\in\Av(m,F(0,1,3,0))$. We form a graph $G(A)$ on rows of $A$ of edges and directed edges as follows: \begin{itemize}
    \item $i-j$ if there are at most two $\left[\linelessfrac{0}{1}\right]$ and at most two $\left[\linelessfrac{1}{0}\right]$ submatrices in the two rows
   \item $i\rightarrow j$ if there is no submatrix $\left[\linelessfrac{0}{1}\right]$ in the two rows \end{itemize} 
   The graph $G(A)$ must have at least one of these two edges between every pair of rows to avoid $F(0,1,3,0)$.
   Assume that if we have both $i-j$ and $i\rightarrow j$, we will ignore $i-j$. We will show that in $A$ (or in some large submatrix), the undirected edges $i-j$ appear only in cliques  say $C_1,C_2,\ldots$ and moreover there is an ordering of the cliques so that the directed edges go from $C_i$ to $C_j$ for $i<j$.  This is analogous to (\ref{blockform}).

The following are proven in \cite{extremalpaper}. The idea is to consider $A\in\Av(m,F(0,1,3,0))$ with $\ncols{A}=\forb(m,F(0,1,3,0))$. If we  delete 2 rows and at most 3 columns we obtain $A'\in\Av(m-2,F(0,1,3,0))$, then we have a contradiction with $\ncols{A'}>\forb(m-2,F(0,1,3,0))$. Similarly if we delete three rows and at most 6 columns while preserving simplicity, we have a contradiction. For the case with  $\ncols{A}=\forb(m, F(0,1,3,0))-1$, we need a bit more room, while deleting 2 rows and at most 2 columns  or deleting 3 rows and at most 5 columns still yield contradictions.

\begin{lemm} Let $A\in\Av(m,F(0,1,3,0))$ with $\ncols{A}\ge\forb(m, F(0,1,3,0))-1 = \left\lfloor\frac{7}{3}m\right\rfloor$. Then 
  the following 2 cases on a triple of rows $(i,j,k)$ cannot appear in $G(A)$: 
  \begin{itemize}
    \item[a)] $i\rightarrow j$, $j\rightarrow k$, $k\rightarrow i$,
    \item[b)] $i\rightarrow j$, $j\rightarrow k$, $k-i$.
   \qed \end{itemize} 
  \label{nearopt}\end{lemm}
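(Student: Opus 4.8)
The plan is to reinterpret the directed edges of $G(A)$ as a domination relation and exploit its transitivity, falling back on the near-extremal deletion technique only where the relation degenerates. Recall that $i\to j$ means there is no $\left[\linelessfrac{0}{1}\right]$ on rows $i,j$ (with $i$ on top), i.e.\ no column has a $0$ in row $i$ and a $1$ in row $j$; equivalently the support of row $j$ is contained in the support of row $i$, so row $i$ dominates row $j$ entrywise. The first thing I would record is that this relation is transitive: assuming $i\to j$ and $j\to k$, a hypothetical column with a $0$ in row $i$ and a $1$ in row $k$ has either a $1$ in row $j$ (giving $\left[\linelessfrac{0}{1}\right]$ on $\{i,j\}$, forbidden by $i\to j$) or a $0$ in row $j$ (giving $\left[\linelessfrac{0}{1}\right]$ on $\{j,k\}$, forbidden by $j\to k$); so no such column exists and $i\to k$.

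Case (b) is then essentially immediate from transitivity. From $i\to j$ and $j\to k$ we obtain $i\to k$, so rows $i,k$ carry a directed edge. By our recording convention the undirected edge $k-i$ is reserved for \emph{incomparable} pairs — it is kept only when there is no directed edge in either direction, which in particular requires at least one $\left[\linelessfrac{0}{1}\right]$ and at least one $\left[\linelessfrac{1}{0}\right]$ among rows $i,k$. This contradicts $i\to k$, so the triple $i\to j,\ j\to k,\ k-i$ simply cannot be recorded in $G(A)$.

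Case (a) is where the hypothesis on $\ncols{A}$ enters. The cycle $i\to j\to k\to i$ forces row $i\ge$ row $j\ge$ row $k\ge$ row $i$, so the three rows are identical; concretely, on rows $i,j,k$ every column of $A$ is $\left[\begin{smallmatrix}0\\0\\0\end{smallmatrix}\right]$ or $\left[\begin{smallmatrix}1\\1\\1\end{smallmatrix}\right]$. Now I would delete one of these duplicate rows: no column is removed, and no two columns become equal because a surviving copy of the row still separates any columns that differed only in the deleted row, so simplicity is preserved. The result is a simple $A'\in\Av(m-1,F(0,1,3,0))$ with $\ncols{A'}=\ncols{A}\ge \left\lfloor\frac{7}{3}m\right\rfloor=\forb(m,F(0,1,3,0))-1$. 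Since $\forb(m,F(0,1,3,0))-\forb(m-1,F(0,1,3,0))=\left\lfloor\frac{7}{3}m\right\rfloor-\left\lfloor\frac{7}{3}(m-1)\right\rfloor\ge 2$, we get $\ncols{A'}>\forb(m-1,F(0,1,3,0))$, contradicting the maximality defining $\forb$. (This is exactly the ``delete rows, lose few columns'' mechanism flagged in the excerpt, in the favourable sub-case where no columns are lost at all.)

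The only routine verifications are the transitivity case-check on the $2^3$ column patterns over rows $i,j,k$ and the floor arithmetic $\left\lfloor\frac{7}{3}m\right\rfloor-\left\lfloor\frac{7}{3}(m-1)\right\rfloor\ge 2$. The one point I would be most careful about is the recording convention used in case (b): the argument hinges on ``$k-i$ appears in $G(A)$'' genuinely meaning that rows $i,k$ are incomparable, so that a pair which is simultaneously dominated and close is never recorded as undirected. Once that convention is pinned down, both forbidden triples close quickly, with the quantitative hypothesis needed only to rule out the identical-rows degeneracy in case (a).
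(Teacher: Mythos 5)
Your proposal is correct as a proof of the lemma as literally stated, but in case~b) it takes a genuinely different route from the paper, and the difference matters. Case~a) is essentially the paper's argument: the paper also observes that the directed $3$-cycle forces the three rows to be constant-equal on every column and deletes \emph{two} rows (no columns), using $\left\lfloor\frac{7}{3}m\right\rfloor-\left\lfloor\frac{7}{3}(m-2)\right\rfloor\ge 4$; your one-row deletion with the gap $\left\lfloor\frac{7}{3}m\right\rfloor-\left\lfloor\frac{7}{3}(m-1)\right\rfloor\ge 2$ is a valid, marginally more economical variant, and your check that simplicity survives (a surviving duplicate row still separates any two columns) is right. For case~b), however, the paper never invokes the recording convention. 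It uses transitivity only implicitly to list the possible column patterns on rows $i,j,k$ (namely $000$, $111$, $100$, $110$), then reads the undirected condition $k-i$ purely as the \emph{closeness} condition --- at most two $\left[\linelessfrac{1}{0}\right]$ columns on rows $i,k$ --- which caps the nonconstant columns on the triple at two; deleting rows $i,j$ and those $\le 2$ columns gives a simple $A'\in\Av(m-2,F(0,1,3,0))$ with $\ncols{A'}\ge\ncols{A}-2>\forb(m-2,F(0,1,3,0))$, a contradiction. This rules out the \emph{co-occurrence} of the underlying conditions $i\to j$, $j\to k$, and closeness on $(k,i)$, regardless of which edge the convention records. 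Your argument instead makes case~b) definitionally vacuous: transitivity yields $i\to k$, so the convention suppresses $k-i$. That is sound for the post-convention graph $G(A)$, which is what the later clique-structure lemma consumes, and it buys a cleaner, hypothesis-free argument (note that your case~b) uses the bound on $\ncols{A}$ nowhere, which is itself a signal that you are proving a weaker statement than the authors intend). The trade-off: your proof hinges entirely on the suppression convention, whereas the paper's counting proof is robust to reading ``$k-i$'' as the bare closeness condition possibly coexisting with a directed edge --- the reading the paper's own proof adopts. If you keep your route, you should state the recorded-graph interpretation explicitly, and ideally append the paper's two-rows-plus-two-columns deletion to cover the stronger reading.
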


  \begin{proof}
  In case a), in $A$, there are no  columns  nonconstant on rows $i,j,k$ and so can delete $2$ rows and obtain a simple matrix $A'\in
  \Av(m-2,F(0,1,3,0))$ with $\ncols{A'}=\ncols{A}>\forb(m-2, F(0,1,3,0))$, a contradiction. 
  
  In case b), in $A$ the following are the possible columns 
  $$ \begin{array}{c} i\\ j\\ k\\ \end{array}
   \left[\begin{array}{ c c  c c}
        0 & 1 & 1 & 1 \\
        0 & 1 & 0 & 1  \\
        0 & 1 & 0 & 0  \\
   \end{array}\right].$$
   but with $i-k$, there are at most two columns in $A$ with $1$ on row $i$ and $0$ on row $k$. We could have repeats but only to total 2.  Thus we could  delete $2$ rows $i,j$ and the at most two columns (the only possible columns non constant on rows $i,j,k$) and obtain a simple matrix $A'\in
  \Av(m-2,F(0,1,3,0))$ with $\ncols{A'}=\ncols{A}-2>\forb(m-2, F(0,1,3,0))-1$, a contradiction.
\end{proof}\qed
\vskip 10pt
There is a third case to consider.


\vskip 10pt
\begin{lemm}
   Let $A\in \Av(m, F( 0, 1, 3, 0))$. If  a triple of vertices $(i, j, k)$ in $G(A)$ has the following  of edges: $i-j$, $j-k$, $i\rightarrow k$, then we can delete 2 rows and at most 4 columns to obtain a matrix $A'\in\Av(m-2, F(0, 1, 3, 0))$ . \label{cliquecase}\end{lemm}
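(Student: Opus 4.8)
The plan is to delete rows $i$ and $k$ — the two endpoints of the directed edge — and to bound the number of columns that must then be discarded to restore simplicity. First I would record what the three edges force about the columns of $A$ on rows $i,j,k$. The edge $i\rightarrow k$ says no column carries $\linelessfrac{0}{1}$ on rows $(i,k)$, so a column has $0$ in row $i$ only if it has $0$ in row $k$; hence the only patterns that can occur on $(i,j,k)$ are $(0,0,0)$, $(0,1,0)$, $(1,0,0)$, $(1,0,1)$, $(1,1,0)$, $(1,1,1)$, the two patterns $(0,0,1)$ and $(0,1,1)$ being excluded by $i\rightarrow k$. Write $a,b,c,d,e,f$ for the numbers of columns of each of these six types, in this order. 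The undirected edge $i-j$ bounds the number of columns carrying $\linelessfrac{1}{0}$ on rows $(i,j)$, i.e. $c+d\le 2$; the undirected edge $j-k$ bounds the number carrying $\linelessfrac{1}{0}$ on rows $(j,k)$, i.e. $b+e\le 2$. These two inequalities are the whole engine of the argument.

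Next I would delete rows $i$ and $k$ while keeping row $j$, and count collisions. Since row $j$ survives, two columns can collide only if they share their $j$-entry, so I treat the $j=0$ columns and the $j=1$ columns separately. Among the $j=0$ columns the possible types are $(0,0,0)$, $(1,0,0)$, $(1,0,1)$, and the $a$ columns of type $(0,0,0)$ are automatically pairwise distinct on the surviving rows (they already agree on rows $i,j,k$, hence differ somewhere outside $\{i,j,k\}$), so they account for $a$ distinct restrictions; the remaining $j=0$ columns number $c+d\le 2$. Thus at most $c+d\le 2$ of the $j=0$ columns need be discarded. Symmetrically, among the $j=1$ columns (types $(0,1,0)$, $(1,1,0)$, $(1,1,1)$) the $f$ columns of type $(1,1,1)$ are pairwise distinct, and the rest number $b+e\le 2$, so at most $2$ of the $j=1$ columns need be discarded. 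Altogether at most $(c+d)+(b+e)\le 4$ columns are removed, and the resulting $(m-2)$-rowed matrix $A'$ is simple; being a submatrix of $A$ it still lies in $\Av(m-2,F(0,1,3,0))$, as required.

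The one genuinely non-obvious point — and the step I expect to be the crux — is the choice of which two rows to delete. Deleting $\{i,j\}$ or $\{j,k\}$ instead leaves both "free" patterns ($(0,0,0)$ and $(1,1,1)$) distributed so that one of the two edge-bounds no longer caps the collisions, and the count degrades to $6$, which is too weak. Only by retaining the middle row $j$ do the two undirected edges each cap the non-free columns on a different side — $i-j$ via $c+d\le 2$ for the $j=0$ part and $j-k$ via $b+e\le 2$ for the $j=1$ part — yielding the clean bound of $4$. I would therefore present the pattern enumeration and the two inequalities first, and then make the deletion of the endpoints $\{i,k\}$ the decisive move.
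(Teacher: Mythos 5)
Your proof is correct and is essentially the paper's argument: enumerate the six patterns allowed on rows $\{i,j,k\}$ by $i\rightarrow k$, use the two undirected-edge caps (in your notation $c+d\le 2$ from $i-j$ and $b+e\le 2$ from $j-k$) to bound the non-constant columns on the triple by $4$, and delete two of the three rows together with those columns. Your closing claim that only the deletion of $\{i,k\}$ yields the bound $4$ is mistaken, though it does not affect your proof: the \emph{total} number of non-constant columns is $b+c+d+e=(c+d)+(b+e)\le 4$, so discarding all of them leaves every surviving column constant on $\{i,j,k\}$ and simplicity is preserved no matter which two of the three rows are deleted --- indeed the paper's own proof deletes rows $i,j$ and the same at most $4$ columns.
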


\begin{proof}    
   Consider the triple of rows in $A$, say $i,j,k$ with $i-j$, $j-k$, $i\rightarrow k$. Then the possible columns
\begin{E}\begin{array}{c}
\begin{array}{ccccccc}
& & &a&b&c&d\\
\end{array}\\
\left[\begin{array}{cccccc}
0&1&0&1&1&1\\ 0&1&1&0&0&1\\0&1&0&0&1&0\\ \end{array}\right]\\ \end{array}\label{badtriple}\end{E}
We have restrictions on multiplicities $a,b,c,d$ forced by 
$i-j$, $j-k$, $i\rightarrow k$: 
$a,b,c,d\le 2$ and $a+d\le 2$, $b+c\le 2$ yielding that $a+b+c+d\le 4$.  This is true in $A$ so that there are at most 4  columns non constant on $i,j,k$.   
We can delete two rows $i,j$ and the at most 4  columns non constant on $i,j,k$ to obtain $A'\in\Av(m-2, F(0, 1, 3, 0))$. \qed\end{proof}

\vskip 10pt
This is not an immediate contradiction but if we delete $6$ rows and at most $12$ columns, this is a contradiction.

  \begin{lemm} Let $A\in\Av(m,F(0,1,3,0))$ with $\ncols{A}\ge\forb(m, F(0,1,3,0))-1 = \left\lfloor\frac{7}{3}m\right\rfloor$. There are 3 cases in $G(A)$ on a triple of rows $(i,j,k)$:
  \begin{itemize}
    \item[a)] $i\rightarrow j$, $j\rightarrow k$, $k\rightarrow i$,
    \item[b)] $i\rightarrow j$, $j\rightarrow k$, $k-i$,
    \item[c)]  $i-j$, $j-k$, $i\rightarrow k$.
    \end{itemize}.
    Then either the  3 cases a), b), c) do not occur or we can delete 2 rows and up to 4 columns so that the result $A'$ is simple and the 3 cases do not occur  or we can delete 4 rows and up to 8  columns so that the result $A''$ is simple and the 3 cases do not occur. 
   \label{clique}\end{lemm}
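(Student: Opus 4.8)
The plan is to prove \lrf{clique} by a bounded descent that repeatedly invokes \lrf{nearopt} and \lrf{cliquecase}, with the whole argument governed by two elementary consequences of \trf{thm:small_II}: the two-step gap $\forb(m,F(0,1,3,0))-\forb(m-2,F(0,1,3,0))$ is always $4$ or $5$ (hence $\ge 4$), while the six-step gap $\forb(m,F(0,1,3,0))-\forb(m-6,F(0,1,3,0))=\lfloor 7m/3\rfloor-\lfloor 7(m-6)/3\rfloor=14$ is exact. First I would record that, under the standing hypothesis $\ncols{A}\ge\forb(m,F(0,1,3,0))-1$, \lrf{nearopt} already excludes cases a) and b); so the only possible obstruction to the desired conclusion is a case c) triple $i-j,\ j-k,\ i\rightarrow k$.

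Next I would run the descent. If $A$ contains no case c) triple we are in the first alternative. Otherwise, applying \lrf{cliquecase} to such a triple deletes two rows and at most four columns and produces a simple $A_1\in\Av(m-2,F(0,1,3,0))$ with $\ncols{A_1}\ge\ncols{A}-4\ge\forb(m,F(0,1,3,0))-5$. Because the two-step gap is at least $4$, this forces $\ncols{A_1}\ge\forb(m-2,F(0,1,3,0))-1$, so $A_1$ again meets the hypothesis of \lrf{nearopt} at parameter $m-2$; consequently any bad triple surviving in $G(A_1)$ must be of type c), and \lrf{cliquecase} applies to it as well. If $A_1$ has no bad triple we are in the second alternative. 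Otherwise a second reduction yields a simple $A_2\in\Av(m-4,F(0,1,3,0))$ with $\ncols{A_2}\ge\forb(m,F(0,1,3,0))-9\ge\forb(m-4,F(0,1,3,0))-1$ (using the two-step gap twice), again satisfying the hypothesis; if $A_2$ is clean we are in the third alternative.

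Termination is where the exact six-step gap is used. If $A_2$ still carried a case c) triple, a third application of \lrf{cliquecase} would give $A_3\in\Av(m-6,F(0,1,3,0))$ with $\ncols{A_3}\ge\ncols{A}-12\ge\forb(m,F(0,1,3,0))-13$; but $\forb(m,F(0,1,3,0))-\forb(m-6,F(0,1,3,0))=14$ yields $\forb(m,F(0,1,3,0))-13=\forb(m-6,F(0,1,3,0))+1>\forb(m-6,F(0,1,3,0))\ge\ncols{A_3}$, a contradiction. Hence the descent halts after at most two reductions, landing in exactly one of the three stated alternatives.

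The main obstacle is purely arithmetic bookkeeping rather than any new idea: one must verify that the size hypothesis propagates at each step with essentially no slack to spare — in particular that $\forb(m,F(0,1,3,0))-\forb(m-2,F(0,1,3,0))\ge 4$ for every residue of $m\bmod 3$, the tight case $m\equiv 2\pmod 3$ giving exactly $4$ — and that each reduced matrix is genuinely simple. The latter is already guaranteed by \lrf{cliquecase}, since the columns it deletes are precisely those nonconstant on the chosen triple, so removing the two rows identifies no two surviving columns. Once these checks are in place the descent closes, and since a third reduction is always impossible, at most two reductions suffice.
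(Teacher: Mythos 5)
Your proof is correct and follows essentially the same route as the paper: cases a) and b) are ruled out by \lrf{nearopt}, \lrf{cliquecase} is applied at most twice to a surviving case c) triple, and a third application is impossible because it would yield a simple $(m-6)$-rowed matrix in $\Av(m-6,F(0,1,3,0))$ with at least $\ncols{A}-12>\forb(m-6,F(0,1,3,0))$ columns. The only difference is that you spell out the arithmetic the paper leaves implicit, namely that the two-step gap $\forb(m,F(0,1,3,0))-\forb(m-2,F(0,1,3,0))\in\{4,5\}$ guarantees the hypothesis of \lrf{nearopt} still holds for $A'$ and $A''$, which is a worthwhile check since the case $m\equiv 2\pmod 3$ is tight.
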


\begin{proof} Cases a),b) are handled in Lemma~\ref{nearopt}. If case c) occurs in $G(A)$ we can delete 2 rows and up to 4 columns with the result  $A'\in\Av(m-2,F(0,1,3,0))$. If case c) occurs in $G(A')$ we can delete 2 rows and up to 4 columns with the result  $A''\in\Av(m-4,F(0,1,3,0))$.  If case c) occurs in $G(A'')$ we can delete 2 rows and up to 4 columns with the result  $A'''\in\Av(m-6,F(0,1,3,0))$ but $\ncols{A'''}\ge \ncols{A}-12>\forb(m-6,F(0,1,3,0)) $, a contradiction.
 \qed\end{proof}
 \vskip 10pt
 
   Lemma~\ref{clique}  implies that for $A\in\Av(m,F(0,1,3,0))$ with $\ncols{A}=\forb(m,F(0,1,3,0))-1$, there is a large submatrix (on at least $m-4$ rows) for which undirected edges $i-j$ appear only in cliques  say $C_1,C_2,\ldots$. Moreover there is an ordering of the cliques so that the directed edges go from $C_i$ to $C_j$ for $i<j$.  We may assume without loss of generality that the rows of $A$ are ordered so that they confirm to the ordering of the cliques, i.e., if row $r$ is in clique $C_i$, row $s$ is in clique $C_j$ and $i<j$, then $r<s$ holds, as well. This  general structure is like that for $\Av(m,F(0,1,2,0))$. 
   
   The possible clique sizes in $G(A)$ are restricted although cliques of size 3 will dominate.   If $\ncols{A}$ is within one of the bound, then the undirected edges may form cliques only of sizes in $\{1,2,3,\ldots ,8\}$ .  A counting argument with $\ncols{A}=\forb(m, F(0,1,3,0))-1 $ yields the possibilities.

   \begin{prop}\label{prop:column-clique}
       For any column $\alpha$ of $A$ there exists at most one clique $C_i$ that $\alpha$ is non-constant on the rows of $C_i$, furthermore $\alpha$ is $1$ in any row above $C_i$ and $0$ in any row below $C_i$.
   \end{prop}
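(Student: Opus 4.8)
The plan is to extract two monotonicity facts about a single column from the clique structure supplied by Lemma~\ref{clique}, and then read the proposition off directly. The crucial input is the orientation of inter-clique edges: between two rows lying in \emph{distinct} cliques there is no undirected edge (those occur only inside a clique), so there must be a directed edge, and by the ordering established after Lemma~\ref{clique} this edge runs from the lower-indexed clique to the higher-indexed one. Concretely, for a row $r$ in $C_i$ and a row $s$ in $C_j$ with $i<j$ we have $r\rightarrow s$, which by definition means the pair of rows $r,s$ contains no $\left[\linelessfrac{0}{1}\right]$.

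First I would establish, for an arbitrary column $\alpha$, the two forcing facts (a) if $\alpha$ has a $0$ in some row of $C_i$, then $\alpha$ is identically $0$ on every row of every clique $C_j$ with $j>i$; and (b) if $\alpha$ has a $1$ in some row of $C_j$, then $\alpha$ is identically $1$ on every row of every clique $C_i$ with $i<j$. Both are immediate from the absence of $\left[\linelessfrac{0}{1}\right]$ across such a pair: fixing the witnessing row $r_0\in C_i$ with a $0$, any $s$ in a later clique would create a forbidden $\left[\linelessfrac{0}{1}\right]$ on rows $r_0,s$ if $\alpha$ had a $1$ there, so $\alpha$ has a $0$ in $s$; dually for (b), fixing the witnessing $1$ in $s_0\in C_j$ forces a $1$ in every row $r$ of an earlier clique.

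Next I would use (a) to obtain uniqueness. If $\alpha$ were non-constant on two cliques $C_i,C_j$ with $i<j$, then non-constancy on $C_i$ yields a $0$ in some row of $C_i$, so by (a) the column $\alpha$ is all $0$ on $C_j$, contradicting non-constancy on $C_j$. Hence $\alpha$ is non-constant on at most one clique $C_i$. The ``furthermore'' clause is then immediate: applying (b) to the $1$ that $\alpha$ carries in $C_i$ shows $\alpha$ is $1$ on every row above $C_i$, and applying (a) to the $0$ it carries in $C_i$ shows $\alpha$ is $0$ on every row below $C_i$. (If $\alpha$ happens to be constant on every clique, facts (a) and (b) still force an all-$1$ prefix of cliques followed by an all-$0$ suffix, so the statement holds with no mixed clique present.)

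I do not expect a genuine obstacle here, since the whole content lives in the directed-edge orientation between cliques, which Lemma~\ref{clique} and the surrounding discussion already provide. The only point demanding care is bookkeeping the convention that $i\rightarrow j$ means ``no $\left[\linelessfrac{0}{1}\right]$,'' so that the dominating rows (those with the larger $1$-set) occupy the lower-indexed cliques and hence sit \emph{above} in the row order; reversing this orientation would interchange the roles of $1$ and $0$ in the conclusion.
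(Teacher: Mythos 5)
Your proposal is correct and takes essentially the same route as the paper: both proofs use the inter-clique orientation (a directed edge from every row of an earlier clique to every row of a later one, meaning no $\left[\linelessfrac{0}{1}\right]$ on that pair of rows) so that a $1$ of $\alpha$ inside $C_i$ forces $1$'s on all rows above and a $0$ inside $C_i$ forces $0$'s on all rows below, yielding the ``furthermore'' clause with uniqueness as an immediate consequence. The only difference is presentational: you isolate the two forcing facts and spell out the uniqueness step, which the paper leaves implicit.
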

\begin{proof}
    Assume that $\alpha$ is non-constant on the rows of $C_i$. Let $r$ be a row in a clique $C_j$ with $j<i$. There exists a row $s$ in $C_i$ such that $\alpha(s)=1$, hence $r\rightarrow s$ implies $\alpha(r)=1$, as well. Also, there exists a row $t$ in $C_i$ such that $\alpha(t)=0$. If $C_k$ is a clique with $i<k$ and $u$ is a row in $C_k$, then $t\rightarrow u$ implies that $\alpha(u)=0$.\qed
\end{proof}

Proposition~\ref{prop:column-clique} allows us to assign columns to cliques, as follows. Clique $C_i$ is assigned every column that is non-constant on $C_i$, furthermore one more possible column that is constant $1$ on $C_i$ and above and constant $0$ below it.
On a clique of size $k\ge 3$, there are $2k(k-1)$ allowed pairs of    $\linelessfrac{0}{1}$ and $\linelessfrac{1}{0}$. Each non-constant column contributes minimum $k-1$ pairs, hence there are at most $2k$ non-constant columns assigned to that clique and in total $2k+1$ columns. Furthermore, if $|C_i|=k\ge 3$ with $2k+1$ columns assigned, then 
  those columns on $C_i$ form  $[I_k\,I_k^c\,\,\1_k]$. In that case we could remove $k-3$ rows of $C_i$ and $2(k-3)$ columns assigned to it so what remains on rows of $C_i$ is $[I_3\,I_3^c\,\,\1_3]$, hence a simple matrix. If $k-3\ge 6$ this contradicts to the bound of Theorem~\ref{thm:small_II}. On the other hand, if $|C_i|=k$ with at most $2k$ columns assigned, then we could remove the whole $C_i$ 
with the columns assigned to obtain a simple matrix, which is a contradiction again, as long as $k\ge 6$. 
Cliques of size 2 have to be treated separately, since there are only two possible non-constant columns, so a clique of size 2 has at most 3 columns assigned.
The following is an easy construction starting from some $A\in\Av(m,F(0,1,3,0))$ with $\ncols{A}=\lfloor\frac{7}{3}m\rfloor$. Imagine adding $p$ cliques of size 3 before $A$ and $q$ cliques of size 3 after $A$  to obtain a matrix with $3p+m+3q$ rows as shown below. 
\begin{prop}\label{prop:add-3-cliques}
    Let $A\in\Av(m,F(0,1,3,0))$ with $\ncols{A}=\lfloor\frac{7}{3}m\rfloor$. Let $T=[I_3\,I_3^c\,\,\1_3]$. Then 
   \begin{equation}\label{eq:block-ext}
     A'=  \begin{bNiceArray}{cccc|c|cccc}
       T&1&\ldots&1&\Block{4-1}<\Large>\1&\Block{4-4}<\Large>\1& & & \\
       0&T&\ldots&1&& & & & \\
       \vdots&\vdots&\ddots&\vdots& & & & & \\
       0&0&\ldots &T& & & & & \\
       \hline
       \Block{1-4}<\Large>\0& & & &A&\Block{1-4}<\Large>\1& & & \\
       \hline
       \Block{4-4}<\Large>\0& & & &\Block{4-1}<\Large>\0&T&1&\ldots&1\\
        & & & & &0&T&\ldots&1\\
        & & & & &    \vdots&\vdots&\ddots&\vdots\\
        & & & & &  0&0&\ldots &T
       \end{bNiceArray}
   \end{equation} 
   satisfies $A'\in\Av(3p+m+3q,F(0,1,3,0))$ and $\ncols{A'}=\lfloor\frac{7}{3}(3p+m+3q)\rfloor$, as well. \hfil\qed
\end{prop}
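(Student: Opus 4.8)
The plan is to verify in turn the three assertions packed into the statement: the column count, that $A'$ avoids $F(0,1,3,0)$, and that $A'$ is simple. The count is the quickest part. Each diagonal copy of $T=[I_3\,I_3^c\,\1_3]$ has $7$ columns, so the $p$ upper and $q$ lower cliques together contribute $7p+7q$ columns, while the middle block contributes the $\ncols{A}=\lfloor\frac73 m\rfloor$ columns of $A$. Since $7(p+q)\in\ZZ$, this totals $7p+7q+\lfloor\frac73 m\rfloor=\lfloor\frac73(3p+m+3q)\rfloor$, as claimed, provided no two of these columns coincide.

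I would prove avoidance directly from the block shape rather than rebuilding $G(A')$ from scratch. Order the $p+1+q$ row groups $D_1,\dots,D_p,\ A,\ E_1,\dots,E_q$ from top to bottom and record the one structural fact needed: a column assigned to a group is constant $1$ on every group above it and constant $0$ on every group below it, and is nonconstant only on its own group. Fix two rows $u$ (upper) and $v$ (lower). If $u,v$ lie in different groups, no column reads $\left[\linelessfrac{0}{1}\right]$ on this pair: a column assigned strictly above $u$'s group is $0$ in both rows; a column assigned to $u$'s group or to a group strictly between the two is $0$ in row $v$; and a column assigned to $v$'s group or below is $1$ in row $u$. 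Hence $u\rightarrow v$ in $G(A')$, and since every copy of $F(0,1,3,0)$ uses at least one $\left[\linelessfrac{0}{1}\right]$ (in either orientation), the pair cannot contain it. If $u,v$ lie in a common clique $D_s$ or $E_t$, every column outside that clique's own block is constant on the pair, while the seven columns of $T$ restricted to two of its three rows give exactly two $\left[\linelessfrac{0}{1}\right]$ and two $\left[\linelessfrac{1}{0}\right]$; both counts are at most $2$, so $F(0,1,3,0)$ again cannot appear (this is the undirected clique edge $u-v$). Finally, if $u,v$ both lie in $A$, the prepended columns are $\0$ and the appended columns are $\1$ on every row of $A$, hence constant on the pair, so they create no new $\left[\linelessfrac{0}{1}\right]$ or $\left[\linelessfrac{1}{0}\right]$ and the pair inherits avoidance from $A\in\Av(m,F(0,1,3,0))$. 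These three cases are exhaustive, giving $A'\in\Av(3p+m+3q,F(0,1,3,0))$.

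Simplicity, and with it the exactness of the count, is the step I expect to be the real obstacle. Within a single block the columns are distinct because $T$ is simple and extending the simple matrix $A$ by equal constant entries preserves distinctness; two columns from different blocks that are each nonconstant on their own group are separated by that group. The only columns constant on every group are the all-ones columns $\1_3$ of the individual cliques together with any all-ones or all-zeros column of $A$, and these ``threshold'' columns are ordered by the position of their $1/0$ cut in the group ordering. All such cuts are distinct \emph{except} that the $\1_3$ of $D_p$ would coincide with an all-zeros column of $A$ at the cut between the upper cliques and $A$. Thus $A'$ is simple precisely when $A$ has no all-zeros column, which holds for the near-extremal matrices the construction targets: by the clique analysis preceding the proposition such $A$ is assembled from blocks $[I_k\,I_k^c\,\1_k]$ and so never contains an all-zeros column. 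Confirming this boundary bookkeeping, rather than the clean pairwise check above, is where the attention is needed, and it is what guarantees the exact value $\ncols{A'}=\lfloor\frac73(3p+m+3q)\rfloor$.
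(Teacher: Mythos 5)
Your column count and your avoidance argument are correct, and they supply exactly the verification the paper leaves unwritten (the proposition is stated with an immediate \qed). The pairwise check is the right mechanism: $F(0,1,3,0)$ is $2$-rowed, both of its row-orientations require at least one $\left[\linelessfrac{0}{1}\right]$ and three equal nonconstant columns, so your three cases (rows in different groups: no $\left[\linelessfrac{0}{1}\right]$ at all; rows in one clique: $T$ restricted to two of its three rows gives exactly two $\left[\linelessfrac{0}{1}\right]$ and two $\left[\linelessfrac{1}{0}\right]$; both rows in $A$: the bordering blocks are constant) are exhaustive and each is handled correctly, as is the arithmetic $7(p+q)+\lfloor\frac{7}{3}m\rfloor=\lfloor\frac{7}{3}(3p+m+3q)\rfloor$.

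The genuine problem is your last step. You correctly isolated the unique possible coincidence of columns: the $\1_3$ column of the bottom upper block $D_p$ and the extension of an all-zero column of $A$ are both equal to $1$ exactly on the $3p$ top rows (and, correctly, you get no dual clash from $\1_m$, since its extension is $0$ on $E_1$ while $E_1$'s $\1_3$ column is $1$ there). But your resolution --- that a matrix $A\in\Av(m,F(0,1,3,0))$ with $\ncols{A}=\lfloor\frac{7}{3}m\rfloor$ ``is assembled from blocks $[I_k\,I_k^c\,\1_k]$ and so never contains an all-zeros column'' --- is false. The clique analysis only constrains the columns nonconstant on some clique; unassigned threshold columns such as $\0_m$ are allowed, and indeed \emph{every} sharp example given after Proposition~\ref{prop:cliquenumbers} contains the column $\0_m$ (e.g.\ five $4$-cliques with $45$ assigned columns, the column $\0_m$, and $t$ $3$-cliques). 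For such an $A$ and any $p\ge 1$ the displayed $A'$ has a repeated column, so your proof as written does not establish the proposition for all admissible $A$; what you actually uncovered is a small defect in the statement itself, which should be surfaced and repaired rather than argued away. Two repairs work: either add the hypothesis $\0_m\notin A$ when $p\ge 1$, or delete the duplicated column and adjoin the all-zero column $\0_{3p+m+3q}$ --- this column is absent from $A'$ when $p\ge 1$ (every column of $A'$ has a $1$ among the rows of $D_1$), and it cannot create $F(0,1,3,0)$ because every column of $F$ is nonconstant on its two rows; this restores simplicity and the exact count $\lfloor\frac{7}{3}(3p+m+3q)\rfloor$.
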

Proposition~\ref{prop:add-3-cliques} shows that cliques of size three can freely be added to a matrix of size one off the bound. However, the possible number of cliques of other sizes is bounded. 
\begin{prop}\label{prop:cliquenumbers}
    Let $A\in\Av(m,F(0,1,3,0))$ with $\ncols{A}=\lfloor\frac{7}{3}m\rfloor$. Then 
    \begin{enumerate}
        \item there exists at most one clique each of sizes in $\{1,2,6,7,8\}$,
        \item there exist at most two cliques of size $5$,
        \item there exist at most $5$  cliques of size $4$.
    \end{enumerate}
\end{prop}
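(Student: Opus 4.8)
The plan is to run a single global counting argument that charges every column of $A$ to a clique and measures, for each clique size, how far short of the ``ideal'' ratio $\tfrac{7}{3}$ (columns per row, achieved only by size-$3$ cliques as in Proposition~\ref{prop:add-3-cliques}) that size falls. Writing the clique sizes of the decomposition as $k_1,\dots,k_N$ with $\sum_i k_i=m$, I first record the maximum number $f(k)$ of columns that can be charged to a clique of size $k$. By the discussion preceding the proposition, a clique of size $k\ge 3$ receives at most its $2k$ non-constant columns together with one constant column, so $f(k)=2k+1$; a clique of size $2$ has only two possible non-constant columns, so $f(2)=3$; and a clique of size $1$ admits no non-constant column, so $f(1)=1$. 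In all cases the number of non-constant columns charged to a clique is $f(k)-1$.

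The key step is to turn these per-clique bounds into a single inequality for $\ncols{A}$. Every column is either non-constant on exactly one clique (Proposition~\ref{prop:column-clique}) or constant on all of them. The non-constant columns contribute at most $\sum_i\bigl(f(k_i)-1\bigr)$. A globally constant column must, by the clique ordering and the directed edges, be all-$1$ on a prefix $C_1,\dots,C_t$ and all-$0$ on the remaining cliques, so there are at most $N+1$ such ``cut'' columns, one for each $t\in\{0,1,\dots,N\}$. For $t\ge 1$ the cut after $C_t$ is precisely the single constant column already counted inside $f(k_t)$, whereas the cut at $t=0$ is the all-$\0$ column, charged to no clique. Hence
\begin{equation*}
\ncols{A}\ \le\ \sum_{i=1}^N\bigl(f(k_i)-1\bigr)+(N+1)\ =\ \sum_{i=1}^N f(k_i)+1.
\end{equation*}

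Next I convert this into a deficiency budget. Setting $d(k)=\tfrac{7}{3}k-f(k)\ge 0$ we have $\sum_i f(k_i)=\tfrac{7}{3}m-\sum_i d(k_i)$, and since $\ncols{A}=\lfloor\tfrac{7}{3}m\rfloor$ the displayed inequality yields
\begin{equation*}
\sum_{i=1}^N d(k_i)\ \le\ \tfrac{7}{3}m-\Bigl\lfloor\tfrac{7}{3}m\Bigr\rfloor+1\ \le\ \tfrac{5}{3}.
\end{equation*}
A direct computation gives $d(1)=\tfrac43,\ d(2)=\tfrac53,\ d(3)=0,\ d(4)=\tfrac13,\ d(5)=\tfrac23,\ d(6)=1,\ d(7)=\tfrac43,\ d(8)=\tfrac53$ (and $d(k)\ge 2$ for $k\ge 9$, consistent with the earlier fact that clique sizes do not exceed $8$). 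The three claims now follow by comparing against the budget $\tfrac53$: two cliques of a size in $\{1,2,6,7,8\}$ would cost $\tfrac83,\tfrac{10}3,2,\tfrac83,\tfrac{10}3$ respectively, each exceeding $\tfrac53$; three cliques of size $5$ would cost $2>\tfrac53$; and six cliques of size $4$ would cost $2>\tfrac53$. This forces at most one clique of each size in $\{1,2,6,7,8\}$, at most two of size $5$, and at most five of size $4$.

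The main obstacle is pinning down the exact ``$+1$'' in the column count: the budget is only as tight as $\tfrac53$ because precisely one globally constant column (the all-$\0$ column) escapes being charged to a clique, while every other constant column coincides with the single constant column already allotted to some clique. Getting this bookkeeping exactly right---together with the boundary values $f(1)=1$ and $f(2)=3$, which are what make sizes $1$ and $2$ as expensive as sizes $7$ and $8$---is what makes the deficiencies and the resulting counts come out tight; everything after that is the routine arithmetic above.
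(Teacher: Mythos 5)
Your proof is correct, and it takes a genuinely different route from the paper's. The paper argues by local deletion: a clique of size $k>3$ permits removing $k-3$ rows (or all $k$ rows) together with at most $2(k-3)$ (resp.\ $2k$) columns while preserving simplicity, and since removing $\ell\ge 6$ rows with at most $2\ell$ columns already contradicts the bound of Theorem~\ref{thm:small_II}, at most one clique of size at least $6$, two of size $5$, and five of size $4$ can occur; sizes $2$ and $1$ are then treated ad hoc (two $2$-cliques allow removing $4$ rows with at most $6$ columns, two $1$-cliques allow removing $2$ rows with at most $2$ columns, both contradictions). You instead aggregate the per-clique capacities $f(k)$ into the single inequality $\ncols{A}\le\sum_i f(k_i)+1$ and a deficiency budget $\sum_i d(k_i)\le \frac{5}{3}$. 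Your bookkeeping of the constant columns --- the one delicate point --- is sound: a column constant on every clique must be all-$1$ on a prefix of cliques because every inter-clique pair of rows carries a forward-directed edge, simplicity makes the $N+1$ cut columns pairwise distinct, and every cut except the all-zero column is precisely the single constant column the paper's assignment rule (after Proposition~\ref{prop:column-clique}) already charges to a clique. The trade-off between the two arguments: the paper's proof needs nothing beyond deletion lemmas already established, while your budget argument proves strictly more than the stated claims --- it also excludes mixed patterns (e.g.\ a $1$-clique cannot coexist with a $2$-clique, nor can two cliques with sizes in $\{6,7,8\}$ coexist, since each costs $d\ge 1$), it re-derives the restriction $k\le 8$ from $d(k)=\frac{k-3}{3}\ge 2$ for $k\ge 9$, and it explains why the paper's three examples following the proposition are tight, each saturating the budget exactly ($5\,d(4)=d(8)=\frac{5}{3}$ when $m\equiv 2\pmod 3$, and $d(1)=\frac{4}{3}$ when $m\equiv 1\pmod 3$). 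One phrasing slip worth fixing: ``the number of non-constant columns charged to a clique is $f(k)-1$'' should read ``at most $f(k)-1$'', which is how you actually use it.
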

\begin{proof}
   We have seen that a clique of size $k>3$ allows removing $k-3$ rows (resp. $k$ rows) and at most $2(k-3)$ columns (resp. at most $2k$ columns) while preserving simplicity. 
   Also, we have observed that removing $\ell\ge 6$ rows with at most $2\ell$ columns results in a contradiction. This immediately implies that there exists at most one clique of size at least 6, or at most 2 cliques of size 5  or at most 5 cliques of size 4. 
   
   A similar analysis shows that we may have at most one clique of size 2. Indeed, a clique of size 2 has at most 3 columns assigned, so if there are at least two cliques of size 2, then we could remove 4 rows with at most 6 columns, which would result in a simple matrix $A'\in\Av(m-4,F(0,1,3,0))$ with more than $\forb(m-4,F(0,1,3,0))$ columns.

   Cliques of size 1 are special. In particular, there are no non-constant columns on the (single) row of a 1-clique. However, if row $r$ is a 1-clique, then any column which has $0$ entry in $r$ has all its entries $0$ below $r$. Similarly, if a column has entry $1$ in row $r$, then all its entries are $1$ above row $r$. Thus, the only possible pair of columns distinguished by only row $r$ consists of the column $1$ above $r$ and $0$ in $r$ and below, and the column which is $1$ in $r$ and above, and $0$ below $r$. This latter column is assigned to the 1-clique on $r$.  So if there are at least two cliques of size $1$, then we could remove $2$ rows and at most $2$ columns to obtain a simple matrix,  a contradiction. \hfil \qed
 \end{proof}

The bounds in Proposition~\ref{prop:cliquenumbers} are sharp, that is all of the cases may occur.
Some examples are as follows. We have 5 cliques of size 4 with 45 columns assigned, the column $\0_m$, and say $t$ cliques of size 3 with 7 columns assigned each. Then the number of rows is $m=3t+20$, while the number of columns is $45+1+7t=\lfloor\frac{7}{3}(3t+20)\rfloor$. Similarly one may take $\0_m$, an 8-clique of 17 columns assigned, and $t$ cliques of size 3 with 7 columns assigned each. Then $m=3t+8$, the number of columns is $1+17+7t=\lfloor\frac{7}{3}(3t+8)\rfloor$. Alternatively, we may have row $1$ as a 1-clique, that is we take the columns $\mathbf{0}_m$, $\begin{bmatrix}
    1\\
    \mathbf{0}_{m-1}
\end{bmatrix}$  , and $t$ cliques of size $3$ with $7$ columns assigned each, on rows $2,3,\ldots ,3t+1$. Then $m=3t+1$, the number of columns is $7t+2=\lfloor\frac{7}{3}(3t+1)\rfloor$.
\vskip 10pt
Note that Lemma~\ref{clique} could be used successfully even for $\ncols{A}$ more than one away from the bound but more constructions appear. For example the $3\times 6$ matrix you get from (\ref{badtriple}) is two away from the bound.  The clique structure  persists in $\ext(m,F(0,1,p,0))$ for $p=4,5,6$ and perhaps for larger $p$.

\bibliographystyle{alpha}
\bibliography{bib}

\end{document}